\newtheorem{theorem}{Theorem}
\newtheorem{lemma}[theorem]{Lemma}
\newtheorem{corollary}[theorem]{Corollary}
\newtheorem{proposition}[theorem]{Proposition}
\newtheorem{question}[theorem]{Question}
\theoremstyle{definition}
\newtheorem{definition}[theorem]{Definition}
\newtheorem{notation}[theorem]{Notation}
\newtheorem{example}[theorem]{Example}
\newtheorem{remark}[theorem]{Remark}
\newcommand{\Spec}{\mathrm{Spec}}
\newcommand{\pth}{\mathrm{Path}}
\newcommand{\so}{\mathbf{s}}
\newcommand{\ra}{\mathbf{r}}
\newcommand{\MT}{\mathcal{M}}
\newcommand{\RT}{\mathcal{R}}
\newcommand{\AT}{\mathcal{A}}
\newcommand{\ATC}{\mathcal{AC}}
\newcommand{\R}{\mathbb{R}}
\newcommand{\Z}{\mathbb{Z}}
\newcommand{\N}{\mathbb{N}}
\begin{document}

\title{Realizing posets as prime spectra\\ of Leavitt path algebras}
\author{G.\ Abrams, G.\ Aranda Pino, Z.\ Mesyan, and C.\ Smith}

\maketitle

\begin{abstract}
We associate in a natural way to any partially ordered set $(P,\leq)$ a directed graph $E_P$ (where the vertices of $E_P$ correspond to the elements of $P$, and the edges of $E_P$ correspond to related pairs of elements of $P$), and then describe the prime spectrum $\Spec(L_K(E_P))$ of the resulting Leavitt path algebra $L_K(E_P)$. This construction allows us to realize a wide class of partially ordered sets as the prime spectra of rings. More specifically, any partially ordered set in which every downward directed subset has a greatest lower bound, and where these greatest lower bounds satisfy certain compatibility conditions, can be so realized. In particular, any partially ordered set satisfying the descending chain condition is in this class.   
\medskip

\noindent
\emph{Keywords:}  prime spectrum, partially ordered set, Leavitt path algebra

\noindent
\emph{2010 MSC numbers:} 16W10, 16D25, 06A06


\end{abstract}

For a ring $R$, the partially ordered set $(\Spec(R), \subseteq)$, consisting of the prime ideals of $R$ under set inclusion, has been the focus of significant research attention for many decades. While classically such investigations dealt only with commutative unital rings (e.g.,~\cite{Facchini, HW, Hochster, Lewis, LO, Speed, Wiegand}), recent energy has been spent studying more general classes (e.g.,~\cite{GRV, Sarussi}). Specifically, one may ask whether there are any necessary conditions on the partially ordered set $(\Spec(R), \subseteq )$ for general $R$.  More precisely, one may pose a ``Realization Question":  given a poset $(P, \leq)$, does there exist a ring $R$ for which $(\Spec(R), \subseteq ) \cong (P,\leq)$? In this article we answer such a question in the affirmative for a large class of partially ordered sets, thereby extending the class for which  an affirmative answer was heretofore known.  In the process, we are led to a naturally occurring collection of partially ordered sets, an until-now-unidentified collection which seems to be of interest in its own right. The class of algebras by which we realize these posets as prime spectra are built as Leavitt path algebras. Such algebras also arise in this Realization Question context, as they have at their heart a directed graph, and there is a natural way (germane to the Realization Question, and described herein) to associate a directed graph $E_P$ to any given poset $P$.  

The article is organized as follows. We start by reminding the reader of various  definitions and properties of partially ordered sets, as well as of the definition of Leavitt path algebras. After providing a necessary general condition on $\mathrm{Spec}(R)$ for arbitrary $R$, which will give some context to our results (Property GLB, Proposition~\ref{primeintersect}), we present the definition of the directed graph $E_P$ arising from a partially ordered set $P$. Subsequently, we analyze the relationship between $(P, \leq)$ and $(\Spec(L_K(E_P)), \subseteq)$, where $L_K(E_P)$ denotes the Leavitt path algebra of $E_P$ with coefficients in a field $K$. We show (Proposition \ref{dccthrm}) that the descending chain condition on $P$ is necessary and sufficient in order that the easily-anticipated map from $P$ to $\Spec(L_K(E_P))$ is an order-isomorphism. In our main Realization Question result (Theorem \ref{EPspec}), we show that $\Spec(L_K(E_P))$ is isomorphic to a partially ordered set $\AT(P)$ built from $P$ by appropriately adding greatest lower bounds to certain subsets of $P$. We then proceed to show (Theorem~\ref{APprop}) that  the construction of the poset $\AT(P)$ from $P$ may be interpreted quite naturally in purely poset-theoretic terms.  Specifically, for any poset $P$ we define the poset $\RT(P)$, by removing greatest lower bounds from certain subsets of $P$, and then show that $P \cong \AT(Q)$ for some poset $Q$ precisely when $\RT(\AT(P)) \cong \AT(\RT(P))$. We also provide in Theorem \ref{APprop} three conditions on a poset $P$ which, taken together, are necessary  and sufficient to ensure that $P \cong \AT(Q)$. The remainder of the article is spent discussing how various  of these three conditions play out in other contexts. In particular, we show how these conditions relate to a currently open question (motivated by a classical result of Kaplansky) regarding  the structure of  the prime spectrum of an arbitrary ring.  

\bigskip

\noindent
\textbf{\large{Acknowledgments}}  
   
\smallskip

\normalsize

We are grateful to George Bergman for showing us Theorem~\ref{bergman} and allowing us to include it here. We are also grateful to Be'eri Greenfeld for extremely helpful correspondence regarding the Realization Question and the structure of $\Spec(R)$, for arbitrary rings $R$. In addition, we thank the referee for a close reading of the manuscript.

The first  author is partially supported by a Simons Foundation Collaboration Grants for Mathematicians Award \#208941.  

The second author was partially supported by the Spanish MEC through project  MTM\break2013-41208-P and by the Junta de Andaluc\'{\i}a and Fondos FEDER, jointly, through projects FQM-336 and FQM-7156.

\section{Partially ordered sets, directed graphs, and Leavitt path algebras}\label{IntroSection}

In this initial section we give an overview of the key foundational  ideas used in this article.   

\bigskip

\noindent
\textbf{\large{Partially ordered sets}}  
\normalsize
\smallskip

A  \emph{preordered set} $(P, \leq)$ is a set $P$ together with a binary relation $\leq$ which is reflexive  and transitive.   If in addition $\leq$ is antisymmetric, then $(P,\leq)$ is called a \emph{partially ordered set} (often shortened simply to \emph{poset}). If a poset $(P, \leq)$ has the property that $p\leq q$ or $q \leq p$ for all $p,q \in P$, then $(P, \leq)$ is called a  \emph{totally ordered set}, or alternatively, a \emph{chain}.  

Let $(P, \leq)$ be a partially ordered set. An element $x$ of  $P$  is called 
 
\qquad -  a \emph{minimal} element of $P$ if there is no  $y \in P$ such that $y < x$, and 
  
\qquad - a \emph{least} element of $P$ if $x \leq y$ for all $y \in P$.   
    
\noindent
Clearly if $P$ contains a least element then it is necessarily unique. $P$ is called an \emph{infinite descending chain} if $P$ is totally ordered, and $P$ has no minimal elements (if and only if $P$ has no least element). $P$ is \emph{downward directed} if it is nonempty, and for all $p,q \in P$ there exists $r \in P$ such that $p \geq r$ and $q\geq r$. If $P$ is downward directed and contains a minimal element, then it is necessarily the least element of $P$.   

 A \emph{Hasse diagram} of $(P, \leq)$ is a diagram (if one exists) representing the elements of the set $P$ as points, with a line drawn upwards from a point $x$ to a point $y$ whenever $x < y$ and there is no element $z\in P$ satisfying $x < z < y$.

Let $(P_1, \leq_1)$ and $(P_2, \leq_2)$ be two partially ordered sets, and let $f:P_1 \to P_2$ be a function. Then $f$ is \emph{order-preserving} if $x\leq_1 y$ implies that $f(x) \leq_2 f(y)$ for all $x,y \in P_1$. Also $f$ is \emph{order-reflecting} if $f(x) \leq_2 f(y)$ implies that $x \leq_1 y$ for all $x, y \in P_1$. In this case, $f$ is necessarily injective, since $f(x)=f(y)$ implies that $x \leq_1 y$ and $y \leq_1 x$. If $f$ is both order-preserving and order-reflecting, then it is an \emph{order-embedding}. If $f$ is an order-embedding and bijective, then it is an \emph{order-isomorphism}. We write $(P_1, \leq_1) \cong (P_2, \leq_2)$ if there is an order-isomorphism between the two partially ordered sets.

For any poset $(P, \leq)$ and subset $S$ of $P$, by restriction $(S, \leq)$ is a poset. We say that $(P, \leq)$ satisfies the \emph{descending chain condition} (shortened as  \emph{DCC}) if there is no subset $S$ of $P$ for which $(S, \leq)$ is  an infinite descending chain. A \emph{lower bound} for $S \subseteq P$ is an element $x$ of $P$ such that $x\leq s$ for all $s\in S$. A \emph{greatest lower bound} of $S$ is a lower bound $x$ of $S$ with the additional property that $y\leq x$ for every $y\in P$ having $y\leq s$ for all $s\in S$. Easily if such a greatest lower bound $x$ of $S$ is an element of $S$ itself, then $x$ is the least element of $S$. Just as easily, if a greatest lower bound for $S$ exists, it is necessarily unique.   If $(P, \subseteq)$ is a set of sets ordered by inclusion, and $S \subseteq P$ is a collection for which $x = \bigcap_{s\in S} \hspace{.02in}  s \in P$, then necessarily $x $  is the greatest lower bound of $S$.  
  
The following three subsets of $\Z$ (the partially ordered set of integers) will be of importance to us: $\N = \{0,1,2, ...\}$, $\Z^+ = \{1,2,3, ...\}$, and $\Z^- = \{-1,-2,-3,...\}$. 

The cardinality of a set $X$ is denoted by $|X|$.  
 
\begin{remark}\label{dccvsglb}
If a poset $P$ has DCC,  then easily every downward directed subset $S$ of $P$ (in particular, every chain in $P$) necessarily has a greatest lower bound in $P$ (indeed, has a least element in $S$). However, there are posets $P$ where every downward directed subset has a greatest lower bound in $P$, but which do not have DCC. The poset $P = \{0\} \cup \{ \frac{1}{n} \ | \ n \in \Z^+ \}$ with the usual order is such. 
\hfill $\Box$
\end{remark}

\bigskip

\noindent
\textbf{\large{Directed graphs}}  
\normalsize
\smallskip

A \emph{directed graph} $E=(E^0,E^1,\so, \ra)$ consists of two  sets $E^0,E^1$ (the elements of which are called \emph{vertices} and \emph{edges}, respectively), together with functions $\so,\ra:E^1 \to E^0$, called \emph{source} and \emph{range}, respectively. We shall refer to directed graphs as simply ``graphs" from now on. A \emph{path} $p$ in $E$ is a finite sequence $e_1\cdots e_n$ of edges $e_1,\dots, e_n \in E^1$ such that $\ra(e_i)=\so(e_{i+1})$ for $i\in \{1,\dots,n-1\}$. Here we define $\so(p):=\so(e_1)$ to be the \emph{source} of $p$ and $\ra(p):=\ra(e_n)$ to be the \emph{range} of $p$. We view the elements of $E^0$ as paths of length $0$ (extending $\so$ and $\ra$ to $E^0$ via $\so(v)=v = \ra(v)$ for all $v\in E^0$), and denote by $\pth(E)$ the set of all paths in $E$. A path $p = e_1\cdots e_n$ is said to be \emph{closed} if $\so(p)=\ra(p)$. Such a path is said to be a \emph{cycle} if the path has nonzero length, and in addition $\so(e_i)\neq \so(e_j)$ for every $i\neq j$. A cycle consisting of just one edge is called a \emph{loop}. A graph which contains no cycles is called \emph{acyclic}.

A vertex $v \in E^0$ for which the set $\so^{-1}(v) = \{e\in E^1 \mid \so(e)=v\}$ is finite is said to have \emph{finite out-degree}. A graph $E$ is said to have \emph{finite out-degree}, or to be \emph{row-finite}, if every vertex of $E$ has finite out-degree. A vertex $v \in E^0$ such that $\so^{-1}(v) = \emptyset$ is called a \emph{sink}, while a vertex having finite out-degree which is not a sink is called \emph{regular}. If $u,v \in E^0$ are distinct vertices such there is a path $p \in \pth(E)$ satisfying $\so(p) = u$ and $\ra(p)=v$, then we write $u > v$. It is easy to see that $(E^0, \geq)$ is a preordered set. Given a vertex $v \in E^0$, set $\MT(v) = \{w \in E^0 \mid w \geq v\}$. A subset $H$ of $E^0$ is \emph{hereditary} if whenever $u \in H$ and $u \geq v$ for some $v \in E^0$, then $v \in H$. Also $H \subseteq E^0$ is \emph{saturated} if $\ra(\so^{-1}(v)) \subseteq H$ implies that $v \in H$ for any regular $v \in E^0$. A nonempty subset $M$ of $E^0$ is a \emph{maximal tail} if it satisfies the following conditions.

(MT1) If $v\in M$ and $u\in E^0$ are such that $u \geq v$, then $u\in M$.

(MT2) For every regular $v \in M$ there exists $e \in E^1$ such that $\so(e)=v$ and $\ra(e) \in M$. 

(MT3) For all $u,v \in M$ there exists $w\in M$ such that $u \geq w$ and $v \geq w$.\\
For any subset $H \subseteq E^0$ it is easy to see that $H$ is hereditary if and only if $M=E^0\setminus H$ satisfies MT1, and $H$ is saturated if and only if $M=E^0\setminus H$ satisfies MT2.

\bigskip

\noindent
\textbf{\large{Leavitt path algebras}}  
\normalsize
\smallskip

For any field $K$ and (nonempty) graph $E$ there are a number of well-studied ways in which one may produce an associative $K$-algebra which reflects the structure of $E$. In the current context, the Leavitt path algebra provides us with an effective tool. Precisely, the \emph{Leavitt path $K$-algebra} $L_K(E)$ \emph{of $E$} is the $K$-algebra generated by the set $\, \{v\mid v\in E^0\} \cup \{e,e^*\mid e\in E^1\}$, subject to the following relations:

\smallskip

{(V)} \ \ \ \  $vw = \delta_{v,w}v$ for all $v,w\in E^0$,

{(E1)} \ \ \ $\so(e)e=e\ra(e)=e$ for all $e\in E^1$,

{(E2)} \ \ \ $\ra(e)e^*=e^*\so(e)=e^*$ for all $e\in E^1$,

{(CK1)}  \hspace{.01in}  $e^*f=\delta _{e,f}\ra(e)$ for all $e,f\in E^1$, and

{(CK2)} \  $v=\sum_{e\in \so^{-1}(v)} ee^*$ for every regular vertex $v\in E^0$.   

\smallskip

The Leavitt path algebra $L_K(E)$ may also be viewed as follows.  Given a (directed) graph $E$, let $\widehat{E}$ denote the directed graph for which $\widehat{E}^0 = E^0$ and $\widehat{E}^1 = E^1 \sqcup (E^1)^*$, where $(E^1)^* = \{e^* \mid e\in E^1\}$ and for each $e^* \in (E^1)^*$, $\so(e^*) = \ra(e)$ and $\ra(e^*) = \so(e)$.   (Throughout, $\sqcup$ denotes ``disjoint union.")   That is, $\widehat{E}$ is constructed from $E$ by adding to $E^1$, for each edge $e$, a new edge $e^*$ having  orientation opposite to that of $e$.   Then  $L_K(E)$ is the quotient of the standard path algebra $K\widehat{E}$, modulo the relations CK1 and CK2.   

For all $v \in E^0$ we define $v^*:=v$, and for all paths $\alpha  = e_1 \cdots e_n$ ($e_1, \dots, e_n \in E^1$) we set $\alpha^*:=e_n^* \cdots e_1^*$, $\ra(\alpha^*):=\so(\alpha)$, and $\so(\alpha^*):=\ra(\alpha)$. With this notation, every element of $L_K(E)$ can be expressed (though not necessarily uniquely) in the form $\sum_{i=1}^n k_i\alpha_i\beta_i^*$ for some $k_i \in K$ and $\alpha_i,\beta_i \in \pth (E)$.

It is easily established that $L_K(E)$ is unital (with multiplicative identity $\sum_{v\in E^0}v$) if and only if $E^0$ is a  finite set.    

For additional information and background on Leavitt path algebras see e.g., \cite{ASurvey}.

\section{Prime ideals}\label{primessection}

Recall that given a (not necessarily unital) ring $R$, a proper ideal $I$ of $R$ is \emph{prime} if for all $x,y \in R$, $xRy \subseteq I$ implies that either $x \in I$ or $y \in I$ (equivalently, if for all ideals $A,B$ of $R$, $AB\subseteq I$ implies that either $A \subseteq I$ or $B \subseteq I$).  The set of prime ideals of $R$, denoted $\Spec(R)$, is clearly a poset under set inclusion.   In~\cite[Theorems 9 and 11]{Kap} Kaplansky considers the question of which partially ordered sets can arise as $\Spec(R)$, where $R$ is a  commutative unital ring. We summarize those results here.    

\begin{theorem}[Kaplansky]  \label{Kapthrm}
Let $R$ be a commutative unital ring.   
\begin{enumerate}
\item[$(1)$] If $\, \{I_s \mid s \in S\}$ is a totally ordered set of prime ideals of $R$, then  $\, \bigcap_{s \in S} I_s$ is a prime ideal of $R$. $($Rephrased:  every totally ordered subset of $\, \Spec(R)$ has a greatest lower bound in $\, \Spec(R)$.$)$ 
\item[$(2)$] If $\, \{I_s \mid s \in S\}$ is a totally ordered set of prime ideals of $R$, then $\, \bigcup_{s \in S} I_s$ is a prime ideal of $R$.
\item[$(3)$] If $I \subset J$ are distinct prime ideals of $R$, then there exist distinct prime ideals $I'$ and $J'$ of $R$ such that $I \subseteq I' \subset J' \subseteq J$, and there are no prime ideals lying properly between $I'$ and $J'$.
\end{enumerate}
\end{theorem}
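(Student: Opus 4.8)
The plan is to establish (1) and (2) by hand and then bootstrap them, via two applications of Zorn's Lemma, to obtain (3).

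For (1), put $I=\bigcap_{s\in S}I_s$, which is a proper ideal since $1\notin I_s$ for all $s$. If $aRb\subseteq I$ with $a\notin I$ and $b\notin I$, pick $s,t\in S$ with $a\notin I_s$ and $b\notin I_t$; by total ordering the smaller of $I_s,I_t$, say $P$, contains neither $a$ nor $b$ but contains $aRb$ (as $I\subseteq P$), contradicting the primeness of $P$. For (2), put $J=\bigcup_{s\in S}I_s$: total ordering makes $J$ an ideal (the larger of two members absorbs a difference), $J$ is proper since no $I_s$ contains $1$, and if $ab\in J$ then $ab\in I_s$ for some $s$, whence $a\in I_s\subseteq J$ or $b\in I_s\subseteq J$. (Statement (1) in fact goes through verbatim for arbitrary rings; we will only need (1) and (2) as stated.)

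For (3), fix an element $x\in J\setminus I$. First I would apply Zorn's Lemma to the family $\mathcal{F}$ of all prime ideals $P$ with $I\subseteq P\subseteq J$ and $x\in P$, ordered by reverse inclusion: $\mathcal{F}\ni J$ is nonempty, and the intersection of any chain in $\mathcal{F}$ again lies in $\mathcal{F}$ by part (1), so $\mathcal{F}$ has an element $J'$ minimal under inclusion; note $I\subsetneq J'$ because $x\in J'\setminus I$. Next I would apply Zorn's Lemma to the family $\mathcal{G}$ of all prime ideals $Q$ with $I\subseteq Q\subsetneq J'$, ordered by inclusion: $\mathcal{G}\ni I$ is nonempty, and the union $U$ of a chain in $\mathcal{G}$ is prime by part (2), contains $I$, and satisfies $U\subsetneq J'$ — for if $U=J'$ then $x$ would lie in some member $Q$ of the chain, and that $Q$ would be a prime with $I\subseteq Q\subsetneq J'\subseteq J$ containing $x$, contradicting the minimality of $J'$ in $\mathcal{F}$. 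Hence $\mathcal{G}$ has an element $I'$ maximal under inclusion. Then $I\subseteq I'\subsetneq J'\subseteq J$, and no prime lies strictly between $I'$ and $J'$, since such a prime would belong to $\mathcal{G}$ and properly contain $I'$.

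The routine parts are (1) and (2); the only real obstacle is organizing the two Zorn arguments in (3) so that the relevant infimum and supremum stay inside the intended interval. A union of primes each strictly contained in $J'$ can a priori equal $J'$, and an intersection of primes each strictly containing $I$ can equal $I$; the device of fixing $x\in J\setminus I$ and choosing $J'$ minimal before searching for $I'$ is exactly what defeats both degeneracies.
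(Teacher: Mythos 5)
Your proof is correct. Note that the paper itself does not prove this theorem: it is quoted from Kaplansky's book (Theorems 9 and 11 of \cite{Kap}), so there is no in-paper argument to compare against. What you have written is essentially Kaplansky's own proof: parts (1) and (2) are the routine verifications you give, and part (3) is obtained by two applications of Zorn's Lemma, using (1) to find a prime $J'$ minimal among those containing a fixed $x\in J\setminus I$ and lying in the interval $[I,J]$, and then using (2) to find a prime $I'$ maximal among those contained properly in $J'$ and containing $I$. The one genuinely delicate point --- that the union of a chain of primes properly contained in $J'$ cannot equal $J'$ --- is exactly where the auxiliary element $x$ earns its keep, and you handle it correctly: if the union were $J'$, then $x$ would already lie in some member of the chain, contradicting the minimality of $J'$. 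The only (harmless) caveat is that in (1) and (2) the index set $S$ should be taken nonempty, as is implicit in the statement.
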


Theorem~\ref{Kapthrm} can be established nearly verbatim, with essentially the same proofs as given in~\cite{Kap},  for not-necessarily-unital commutative rings.  (Statement (2) must be modified to include the possibility in this generality that the union of a totally ordered set of prime ideals might yield the non-prime ideal $R$ of $R$.) The three obvious questions regarding the generalizations of the three assertions of Theorem~\ref{Kapthrm} to noncommutative rings yield three different  answers.  

\medskip

Statement  (1) extends verbatim (and more generally): see Proposition~\ref{primeintersect}.

\smallskip

Statement (2) does not extend: for instance,  Bergman has constructed a noncommutative (unital) ring having a chain of prime ideals whose union is not prime (see~\cite[Example 4.2]{Passman} or~\cite[Example 2.1]{GRV}). A Leavitt path algebra with this property is given in Example~\ref{union-eg} below.

\smallskip

Statement (3): whether or not Statement (3) extends to all rings is currently not known. (We discuss this question in Section~\ref{closingremarks}; a nice account of various aspects of this question is given in~\cite{Sarussi}.)  

\medskip

Here is the extension of Statement (1) of Theorem~\ref{Kapthrm} to noncommutative rings and more general collections of prime ideals.

\begin{proposition}\label{primeintersect}
Let $R$ be any ring, and let $\, \{I_s \mid s \in S\}$ be a downward directed collection of prime ideals of $R$; that is,  for all $s,t \in S$ there exists $r \in S$ such that $I_r \subseteq I_s \cap I_t$. Then $I= \bigcap_{s \in S} I_s$ is a prime ideal. Consequently, if $(P,\leq)$ is a partially ordered set that can be represented as the prime spectrum of a ring, then every downward directed subset of $P$ has a greatest lower bound.     
\end{proposition}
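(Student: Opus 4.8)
The plan is to verify directly that $I = \bigcap_{s \in S} I_s$ meets the elementwise definition of a prime ideal, and then to transport this fact across an order-isomorphism to obtain the assertion about posets. First I would note that $I$ is an ideal, being an intersection of ideals, and that it is proper: the index set $S$ is nonempty (a downward directed set is nonempty by definition), so fixing any $s \in S$ we have $I \subseteq I_s \subsetneq R$, since $I_s$ is prime and hence proper.

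For primeness I would argue by contradiction. Suppose $x, y \in R$ satisfy $xRy \subseteq I$ but $x \notin I$ and $y \notin I$. Then there are $s, t \in S$ with $x \notin I_s$ and $y \notin I_t$, and by downward directedness there is $r \in S$ with $I_r \subseteq I_s \cap I_t$. The point is that this containment runs the right way: since $I_r \subseteq I_s$ and $x \notin I_s$, we get $x \notin I_r$, and similarly $y \notin I_r$. But $xRy \subseteq I \subseteq I_r$ and $I_r$ is prime, forcing $x \in I_r$ or $y \in I_r$, a contradiction. Hence $I$ is prime.

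Finally, for the consequence, suppose $\varphi \colon (P, \leq) \to (\Spec(R), \subseteq)$ is an order-isomorphism and $T \subseteq P$ is downward directed. Then $\varphi(T) = \{I_s \mid s \in S\}$ is a downward directed collection of prime ideals of $R$, so by the first part $I = \bigcap_{s \in S} I_s$ is prime and therefore belongs to $\Spec(R)$. As recorded in Section~\ref{IntroSection}, an intersection of a family of sets that happens to lie in the ambient poset of sets is automatically the greatest lower bound of that family, so $I$ is the greatest lower bound of $\varphi(T)$ in $\Spec(R)$; pulling back through $\varphi^{-1}$, which also preserves greatest lower bounds, $T$ has a greatest lower bound in $P$. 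I do not anticipate a real obstacle here; the only step needing a moment's care is the direction of the inclusion in the contradiction argument (that $I_r \subseteq I_s$ with $x \notin I_s$ yields $x \notin I_r$, not the reverse), together with the facts that a downward directed set is nonempty and that order-isomorphisms send greatest lower bounds to greatest lower bounds.
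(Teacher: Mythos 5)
Your proof is correct and follows essentially the same route as the paper's: both use downward directedness to drop to a common lower bound $I_r$ and apply primeness of $I_r$ to $xRy \subseteq I \subseteq I_r$, and both deduce the poset consequence from the fact that the intersection is the greatest lower bound in $\Spec(R)$. The only cosmetic difference is that you argue by contradiction (choosing one $s$ witnessing $x \notin I_s$ and one $t$ witnessing $y \notin I_t$) where the paper argues directly that $y \in I_t$ for every $t$; your added remarks on nonemptiness of $S$ and preservation of greatest lower bounds under order-isomorphism are fine.
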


\begin{proof}
Since all the $I_s$ are proper ideals of $R$, so is $I$. Now, let $x,y \in R$ be any elements, and suppose that $xRy \subseteq I$ but $x \notin I$. Then $x \notin I_s$ for some $s \in S$, and hence $y \in I_s$, as $I_s$ is prime. By hypothesis, for any $t \in S$ there exists $r \in S$ such that $I_r \subseteq I_s \cap I_t$. Since $I_r \subseteq I_s$, we have $x \notin I_r$ but $xRy \subseteq I_r$. Thus $y \in I_r$, and hence also $y \in I_t$. Since $t \in S$ was arbitrary, we conclude that $y \in I$, and therefore $I$ is prime.

The final claim follows from the fact that $\bigcap_{s \in S} I_s$ is necessarily the greatest lower bound in $\Spec(R)$ of the downward directed set $\{I_s \mid s \in S\}$.
\end{proof}

We will denote  by GLB  the property of $\Spec(R)$ described in Proposition~\ref{primeintersect}, specifically that every downward directed subset has a greatest lower bound. In Theorem~\ref{bergman} we will show that for any poset $P$, having the property GLB is equivalent to having the property that every totally ordered subset of $P$ has a greatest lower bound. Thus the second claim in Proposition~\ref{primeintersect} can also be proved by first noting that Statement (1) of Theorem~\ref{Kapthrm} holds for noncommutative rings $R$, and then applying the purely poset-theoretic Theorem~\ref{bergman}.

\medskip

The structure of $\Spec(L_K(E))$ for a finite graph $E$ was described in \cite{APPSM}. Subsequently, an explicit description of the prime ideals of $L_K(E)$ for an arbitrary graph $E$ was given by Rangaswamy in~\cite[Theorem 3.12]{Ranga}. We present this result now, once a modicum of additional notation has been established. In a graph $E$, a cycle $c \in \pth(E)$ is said to be \emph{WK} (for ``without Condition (K)") if no vertex along $c$ is the source of another distinct cycle in $E$ (i.e., one possessing a different set of edges). A vertex $v \in E^0$ is called a \emph{breaking vertex} of a hereditary saturated subset $H$ of $E^0$ if $v \in E^0\setminus H$, $|\so^{-1}(v)| \geq \aleph_0$, and $1 \leq |\so^{-1}(v) \cap \ra^{-1}(E^0\setminus H)| <\aleph_0$. The set of all breaking vertices of $H$ is denoted by $B_H$. Given $v \in B_H$, we define  $v^H \in L_K(E)$ by setting $v^H  = v - \sum_{\so(e)=v, \ra(e) \notin H} ee^*$.

\begin{theorem} [Rangaswamy] \label{primeclass}
Let $E$ be a graph, $K$ a field, $I$ a proper ideal of $L_K(E)$, and $H = I \cap E^0$. Then $I$ is a prime ideal if and only if $I$ satisfies one of the following conditions.
\begin{enumerate}
\item[$(1)$] $I = \langle H \cup \{v^H \mid v \in B_H\} \rangle$ $($where $B_H$ may be empty$)$ and $E^0\setminus H$ satisfies \emph{MT3}. 
\item[$(2)$] $I = \langle H \cup \{v^H \mid v \in B_H\setminus \{u\}\}\rangle$ for some $u \in B_H$ and $E^0\setminus H = \MT(u)$.
\item[$(3)$] $I = \langle H \cup \{v^H \mid v \in B_H\} \cup \{f(c)\}\rangle$ where $c \in \pth(E) \setminus E^0$ is a WK cycle having source $u \in E^0$, $E^0\setminus H = \MT(u)$, and $f(x)$ is an irreducible polynomial in $K[x, x^{-1}]$.
\end{enumerate}
\end{theorem}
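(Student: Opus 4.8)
The plan is to derive this from two ingredients: the classification of the two-sided ideals of $L_K(E)$, and the criterion for when a Leavitt path algebra is a prime ring. The basic reduction is that $I$ is prime exactly when $L_K(E)/I$ is a prime ring, so the task is to recognize which quotients are prime. For the primeness criterion I would use (or quote) the known fact that $L_K(F)$ is prime if and only if $F^0$ is downward directed under $\geq$, i.e.\ for all $u,v\in F^0$ there is $w\in F^0$ with $u\geq w$ and $v\geq w$; one implication comes from observing that the hereditary saturated closures of $\{u\}$ and $\{v\}$ generate ideals whose product lies in the closure of $\{w : u\geq w,\ v\geq w\}$, and the converse is standard.

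Next I would invoke the ideal structure theorem for $L_K(E)$: given $I$, put $H:=I\cap E^0$ (automatically hereditary and saturated) and $S:=\{v\in B_H \mid v^H\in I\}$; then $I$ is generated by $H$, by $\{v^H\mid v\in S\}$, and by a (possibly empty) family of elements $g_i(c_i)$, where each $c_i$ is a cycle whose image in the quotient graph has no exit and $g_i\in K[x,x^{-1}]$. The graded ideals are precisely those needing no $g_i(c_i)$, and for a graded ideal $L_K(E)/\langle H\cup\{v^H\mid v\in S\}\rangle\cong L_K(E_{(H,S)})$, where the quotient graph $E_{(H,S)}$ has vertex set $(E^0\setminus H)\cup\{v'\mid v\in B_H\setminus S\}$, each $v'$ being a sink that receives only the (finitely many) edges into $v$ originating outside $H$. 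Reproving the non-graded half of this structure theorem from scratch is the main obstacle; in a write-up I would cite it and spend the effort on the translation into the graph conditions (1)--(3), governed throughout by the downward-directedness criterion.

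In the graded case, $I$ is prime iff $L_K(E_{(H,S)})$ is prime iff $(E_{(H,S)})^0$ is downward directed. A vertex $v'$ coming from $v\in B_H\setminus S$ is a sink, hence $\geq$-minimal, so it has a common lower bound with another vertex only if that vertex has a path to $v'$; this forces $|B_H\setminus S|\le 1$. If $B_H\setminus S=\emptyset$ (so $S=B_H$), downward directedness of $E^0\setminus H$ is exactly condition MT3 for $E^0\setminus H$ (MT1 and MT2 being automatic, since $H$ is hereditary and saturated), which is case (1). If $B_H\setminus S=\{u\}$, downward directedness forces every vertex of $E^0\setminus H$ to reach $u'$, whence $E^0\setminus H=\MT(u)$, which is case (2). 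Both implications reverse, since these graph conditions visibly make $E_{(H,S)}$ downward directed.

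In the non-graded case, a generator $g(c)$ with $c$ a cycle based at some $u$ is present. One first shows $c$ must be WK and $E^0\setminus H=\MT(u)$: if a vertex of $c$ were the base of a second cycle, or if an exit of $c$ survived in the quotient graph, then (using the surviving exit together with a return path to $u$ inside $\MT(u)$) one produces a second cycle through $u$, and the induced quotient fails to be prime; and, as in case (2), everything outside $H$ must flow down to $u$. Exit-freeness of $c$ in the quotient graph makes the ideal of $L_K(E_{(H,S)})$ generated by the vertices of $c$ isomorphic to a matrix ring over $K[x,x^{-1}]$; this forces $S=B_H$ (an unkilled breaking vertex would be a sink in the quotient with no common lower bound with the vertices of $c$) and allows only one cycle-polynomial, and the remaining quotient is prime iff the image of $g$ generates a prime ideal of $K[x,x^{-1}]$, i.e.\ iff $g$ may be taken irreducible. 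This is case (3), and the converse again follows by running the identifications backwards. The two genuinely delicate points I expect are (a) the non-graded ideal structure theorem --- pinning down exactly which $g_i(c_i)$ occur and in what form --- and (b) in the non-graded quotient, disentangling the graded part $\langle H\cup\{v^H\mid v\in S\}\rangle$ from the polynomial $g(c)$ so that ``WK,'' ``exit-free,'' ``$S=B_H$,'' ``$E^0\setminus H=\MT(u)$,'' and ``$g$ irreducible'' can all be read off simultaneously.
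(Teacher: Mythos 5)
The paper does not prove this statement at all: it is quoted verbatim from Rangaswamy (\cite[Theorem 3.12]{Ranga}), so there is no in-paper argument to compare yours against. That said, your outline follows what is essentially Rangaswamy's own strategy --- reduce primeness of $I$ to primeness of $L_K(E)/I$, identify the graded quotients with Leavitt path algebras of quotient graphs, invoke the criterion that a Leavitt path algebra is prime iff its vertex set is downward directed, and handle the non-graded ideals via polynomials in cycles that have no exits in the quotient graph --- so as a roadmap it is the right one. It is, however, a roadmap rather than a proof: both of the results you lean on (the full ideal structure theorem for arbitrary graphs, including breaking vertices and the non-graded generators, and the primeness criterion) are themselves substantial theorems, and you correctly flag that the non-graded half is where the real work lies.

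Two smaller points within the outline deserve attention. First, your description of the quotient graph has a slip: the ``finitely many'' in the breaking-vertex condition refers to the edges \emph{out of} $v$ landing outside $H$ (which is what makes $v$ regular in the quotient graph once $v^H$ is killed); the new sink $v'$ receives a copy of every edge of $E$ \emph{into} $v$, and there may be infinitely many of these. Second, in case (2) the converse direction is not quite ``visible'': the sink $u'$ is reachable from a vertex $w$ only via a path of positive length ending in an edge into $u$, so for $w=u$ itself one needs $u$ to lie on a closed path in $E^0\setminus H$. This does hold, but only because $u\in B_H$ forces $u$ to emit an edge into $E^0\setminus H=\MT(u)$, whose range has a path back to $u$; without noting this, downward directedness of the quotient graph does not follow from $E^0\setminus H=\MT(u)$ alone. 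Neither issue derails the approach, but both would need to be addressed in a complete write-up.
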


\begin{remark}\label{HcupstuffcapE0equalsH}
It is easy to see that $H = I \cap E^0$ is hereditary and saturated for any ideal $I \subseteq L_K(E)$. Also, it is noted in \cite[Section 2]{Ranga} that 
$$\langle H \cup \{v^H \mid v \in B_H\} \rangle \cap E^0 = H$$
for any hereditary and saturated subset $H$ of $E^0$.  \hfill $\Box$
\end{remark}

Theorem~\ref{primeclass}  provides us with  all the information we need to  construct a Leavitt path algebra containing a chain of prime ideals whose union is not prime.

\begin{example} \label{union-eg}
Let $K$ be any field, and let $E$ be the following graph.
$$\xymatrix{ 
{\bullet}^{u_1} \ar[rd] \ar@/^.5pc/[rrd] \ar@/^1pc/[rrrd] \ar@/^1.5pc/@{.}[rrrrd]
& & & & \\
 & {\bullet}^{v_1} &  {\bullet}^{v_2} \ar[l]_\infty &  {\bullet}^{v_3} \ar[l]_{\infty} & \ar@{.}[l] \\
{\bullet}^{u_2} \ar[ru] \ar@/^-.5pc/[rru] \ar@/^-1pc/[rrru] \ar@/^-1.5pc/@{.}[rrrru]
& & & &
}$$
(The symbol $\infty$ appearing adjacent to an edge indicates that there are countably infinitely many edges from one vertex to the other.)  Then the two sets $\{u_1\}$ and $\{u_2\}$ are maximal tails of $E$, and all the other maximal tails of $E$ are of the form $M_n = \{u_1, u_2\} \cup \{v_i \mid i \geq n\}$ ($n \in \Z^+$). 

In particular, it follows from Theorem~\ref{primeclass}(1) that $\langle E^0 \setminus M_n \rangle = \langle \{v_1, \dots, v_{n-1}\} \rangle$ is a prime ideal in $L_K(E)$ for each $n\geq 1$ (where $\langle E^0 \setminus M_1 \rangle = 0$). However, $$\bigcup_{i=1}^\infty\langle E^0 \setminus M_i \rangle = \langle \{v_i \mid i \geq 1\}\rangle$$ is not a prime ideal, by Theorem~\ref{primeclass}, since $E^0\setminus \{v_i \mid i \geq 1\} = \{u_1, u_2\}$ does not satisfy MT3. (Note that $E^0 \cap \langle \{v_i \mid i \geq 1\}\rangle = \{v_i \mid i \geq 1\}$ by Remark~\ref{HcupstuffcapE0equalsH}.)  \hfill $\Box$
\end{example}

We mention in passing that it can be shown that adjoining a unit to the ring $L_K(E)$ given in the previous example produces a unital ring with a chain of prime ideals whose union is not prime.

\begin{corollary} \label{primecor}
Let $K$ be a field, and let $E$ be an acyclic graph such that $B_H = \emptyset$ for every hereditary saturated $H \subseteq E^0$. Then an ideal $I$ of $L_K(E)$ is prime if and only if it is of the form $I=\langle H \rangle$, where $H$ is a proper subset of $E^0$ such that $E^0\setminus H$ is a maximal tail.
\end{corollary}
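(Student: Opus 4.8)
The plan is to derive this directly from Rangaswamy's classification (Theorem~\ref{primeclass}), using the acyclicity of $E$ and the vanishing of all breaking-vertex sets to eliminate all but one of the three cases, and then to translate the surviving condition into the language of maximal tails.

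For the forward implication, suppose $I$ is a prime ideal of $L_K(E)$ and set $H = I \cap E^0$. By Theorem~\ref{primeclass}, $I$ satisfies one of conditions (1)--(3). Condition (3) requires a WK cycle in $E$, impossible since $E$ is acyclic; condition (2) requires an element $u \in B_H$, impossible since $B_H = \emptyset$ by hypothesis. Hence condition (1) holds, and since $B_H = \emptyset$ this says exactly that $I = \langle H \rangle$ and that $E^0 \setminus H$ satisfies MT3. By Remark~\ref{HcupstuffcapE0equalsH}, $H$ is hereditary and saturated, which (as recorded at the end of the ``Directed graphs'' subsection) is equivalent to $E^0 \setminus H$ satisfying MT1 and MT2. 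Finally, $I$ is proper, so $H \ne E^0$ (otherwise $I$ would contain every vertex and hence equal $L_K(E)$), and therefore $E^0 \setminus H$ is nonempty. Thus $E^0 \setminus H$ is a maximal tail and $H$ is a proper subset of $E^0$.

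For the converse, let $H$ be a proper subset of $E^0$ with $M = E^0 \setminus H$ a maximal tail. Since $M$ satisfies MT1 and MT2, $H$ is hereditary and saturated, so by Remark~\ref{HcupstuffcapE0equalsH} (using $B_H = \emptyset$) we have $\langle H \rangle \cap E^0 = H \subsetneq E^0$; in particular $\langle H \rangle$ is a proper ideal. Now apply Theorem~\ref{primeclass} to $I = \langle H \rangle$: here $I \cap E^0 = H$ and $B_H = \emptyset$, so condition (1) reads ``$I = \langle H \rangle$ and $E^0 \setminus H$ satisfies MT3'', both of which hold (the latter since $M$ is a maximal tail). Hence $I = \langle H \rangle$ is prime.

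No serious obstacle is anticipated: the argument is a bookkeeping exercise built on Theorem~\ref{primeclass}, Remark~\ref{HcupstuffcapE0equalsH}, and the dictionary between hereditary/saturated subsets and the conditions MT1/MT2. The only points requiring a moment's care are (i) confirming that properness of $I$ forces $H \subsetneq E^0$, and (ii) checking that when $B_H = \emptyset$ the ideal in condition (1) genuinely collapses to $\langle H \rangle$ with no breaking-vertex generators, so that the correspondence $I \leftrightarrow H$ is clean in both directions.
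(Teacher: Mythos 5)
Your proposal is correct and follows essentially the same route as the paper's own proof: both directions reduce to Theorem~\ref{primeclass} with cases (2) and (3) eliminated by acyclicity and $B_H=\emptyset$, and both use Remark~\ref{HcupstuffcapE0equalsH} together with the equivalence between hereditary/saturated subsets and conditions MT1/MT2 to pass between $H$ and the maximal tail $E^0\setminus H$. No substantive differences.
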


\begin{proof}
If $I$ is a prime ideal, then, by Theorem~\ref{primeclass}, $I = \langle I \cap E^0 \rangle$, since $E$ is acyclic and $B_{I \cap E^0} = \emptyset$, where $E^0 \setminus (I \cap E^0)$ satisfies MT3. Since $I$ is an ideal, $I \cap E^0$ must be hereditary and saturated, Remark~\ref{HcupstuffcapE0equalsH}. From this it follows that $E^0 \setminus (I \cap E^0)$ satisfies MT1 and MT2, and is hence a maximal tail. Finally, $I \cap E^0 \neq E^0$, since $\langle E^0 \rangle = L_K(E)$, and $I$ is prime.

Conversely, suppose that $H \subset E^0$ is such that $E^0\setminus H$ satisfies MT1, MT2, and MT3. Then, as mentioned above, $H$ must be hereditary and saturated. Hence, by hypothesis, $B_H = \emptyset$. Moreover, by Remark~\ref{HcupstuffcapE0equalsH}, $\langle H \rangle$ must be a proper ideal of $L_K(E)$. Thus $I = \langle H \rangle$ is prime, by Theorem~\ref{primeclass}.
\end{proof}

\begin{example}\label{ExampleSpecnotDC}
Let $K$ be any field, and let $E$ be the graph pictured here.
$$\xymatrix{ 
{\bullet}^{v_{11}} \ar [r]  \ar[d] & {\bullet}^{v_{12}} \ar [r] \ar[d] & {\bullet}^{v_{13}} \ar [r]  \ar[d] & \ar@{.}[r] & \\
{\bullet}^{v_{21}} \ar [r] \ar[d] & {\bullet}^{v_{22}} \ar [r] \ar[d] & {\bullet}^{v_{23}} \ar [r] \ar[d] & \ar@{.}[r] &\\
{\bullet}^{v_{31}} \ar [r] \ar[d] & {\bullet}^{v_{32}} \ar [r] \ar[d] & {\bullet}^{v_{33}} \ar [r] \ar[d] & \ar@{.}[r] &\\
\ar@{.}[d] & \ar@{.}[d] & \ar@{.}[d] & & \\
&&&&
}$$
Then the following are all the maximal tails of $E$: 
$$M_n = \{v_{ij} \mid i \leq n\} \  (n \in \Z^+),  \ \ \ N_n = \{v_{ij} \mid j \leq n\} \ (n \in \Z^+),  \ \  \mbox{ and }  \ E^0.$$
That is, $M_n$ consists of the vertices in the first $n$ rows of $E$, and $N_n$ consists of the vertices in the first $n$ columns of $E$.   Clearly $E$ is acyclic, and $B_H = \emptyset$ for every hereditary saturated set $H \subseteq E^0$ (since $E$ has only regular vertices). Thus, by Corollary~\ref{primecor}, the prime spectrum of $L_K(E)$ has the following Hasse diagram.
$$\xymatrix@=.5pc{ 
\langle E^0 \setminus M_1 \rangle \ar@{-}[dr] & & & & & & & & \langle E^0 \setminus N_1 \rangle \ar@{-}[dl] \\
& \langle E^0 \setminus M_2 \rangle \ar@{-}[dr] & & & & & & \langle E^0 \setminus N_2 \rangle \ar@{-}[dl] & \\
& & \langle E^0 \setminus M_3 \rangle \ar@{-}[dr] & & & & \langle E^0 \setminus N_3 \rangle \ar@{-}[dl] & & \\
& & & \ar@{.}[dr] & & \ar@{.}[dl] & & & \\
& & & & 0 & & & &
}$$

\noindent
We will revisit this example in Section~\ref{AandRsection}.  \hfill $\Box$
\end{example}

The following  lemma will be useful in the sequel.

\begin{lemma} \label{taillemma}
Let $E$ be a graph, $\, \{M_i \mid i \in I\}$ a collection of maximal tails of $E^0$, and $M = \bigcup_{i\in I} M_i$. Then the following hold.
\begin{enumerate}
\item[$(1)$] $M$ satisfies \emph{MT1} and \emph{MT2}.
\item[$(2)$] $M$ satisfies \emph{MT3} if and only if for all $u, v \in M$ there exists $i \in I$ such that $u, v \in M_i$.
\end{enumerate}
\end{lemma}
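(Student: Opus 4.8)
The plan is to prove the two parts separately, in each case reducing the statement about $M = \bigcup_{i \in I} M_i$ to the corresponding properties of the individual maximal tails $M_i$, using the combinatorial characterizations MT1, MT2, MT3 together with the fact (recorded in the excerpt) that hereditary corresponds to the complement satisfying MT1 and saturated corresponds to the complement satisfying MT2.

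For part (1), the argument should be essentially immediate. For MT1: suppose $v \in M$ and $u \geq v$ in $E^0$. Then $v \in M_i$ for some $i$, and since $M_i$ satisfies MT1 we get $u \in M_i \subseteq M$. For MT2: suppose $v \in M$ is regular. Then $v \in M_i$ for some $i$, and since $M_i$ satisfies MT2 there is an edge $e$ with $\so(e) = v$ and $\ra(e) \in M_i \subseteq M$. (Alternatively, one can phrase this as: each $E^0 \setminus M_i$ is hereditary and saturated, hence so is $\bigcap_i (E^0 \setminus M_i) = E^0 \setminus M$ — hereditariness and saturation are both closed under arbitrary intersection — and then translate back via the stated equivalences. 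Either way is routine.) One should also note $M$ is nonempty since each $M_i$ is.

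For part (2), the reverse implication is again easy: if for all $u,v \in M$ there exists $i$ with $u,v \in M_i$, then since $M_i$ satisfies MT3 there is $w \in M_i \subseteq M$ with $u \geq w$ and $v \geq w$, so $M$ satisfies MT3. The forward implication is the one requiring a little thought, and I expect it to be the main (though still modest) obstacle. Suppose $M$ satisfies MT3, and take $u, v \in M$; say $u \in M_i$ and $v \in M_j$. By MT3 for $M$ there is $w \in M$ with $u \geq w$ and $v \geq w$; say $w \in M_k$. Now invoke MT1 for $M_k$: since $w \in M_k$ and $u \geq w$, we get $u \in M_k$; and since $w \in M_k$ and $v \geq w$, we get $v \in M_k$. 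Hence $u, v \in M_k$, which is exactly what is claimed. The key realization here is that MT1 (the "upward closure" condition on maximal tails) lets one pull $u$ and $v$ back into the single tail $M_k$ that contains their common lower bound $w$ — so no genuine directedness computation across different tails is needed; it all collapses to a single index. I would present the proof in this order: first the nonemptiness remark, then MT1 and MT2 for part (1), then the (easy) reverse direction of part (2), then the forward direction as above.
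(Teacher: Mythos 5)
Your proof is correct and follows essentially the same route as the paper's: part (1) and the reverse direction of (2) by passing to a single $M_i$, and the forward direction of (2) by locating the common lower bound $w$ in some $M_k$ and pulling $u$ and $v$ back into $M_k$ via MT1. No differences worth noting.
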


\begin{proof}
(1) Suppose that $v\in M$ and $u\in E^0$ are such that $u \geq v$. Then there is some $i \in I$ such that $v \in M_i$. Since $M_i$ satisfies MT1, we have $u \in M_i$, and hence $u \in M$, showing that $M$ satisfies MT1. 

Next, let $v\in M$ be a regular vertex, and let $i \in I$ be such that $v \in M_i$. Since $M_i$ satisfies MT2, there exists $e \in E^1$ such that $\so(e)=v$ and $\ra(e) \in M_i$. Hence $\ra(e) \in M$, and therefore $M$ satisfies MT2.

(2) Suppose that for all $u, v \in M$ there exists $i \in I$ such that $u, v \in M_i$. Let $u, v \in M$ and let $i \in I$ be such that $u, v \in M_i$. Since $M_i$ satisfies MT3, there exists $w \in M_i$ such that $u \geq w$ and $v \geq w$. Since $w \in M$, it follows that $M$ satisfies MT3.

Conversely, suppose that $M$ satisfies MT3, and let $u, v \in M$. Then there exists $w \in M$ such that $u \geq w$ and $v \geq w$. Let $i \in I$ be such that $w \in M_i$. Then $u, v \in M_i$, since $M_i$ satisfies MT1.
\end{proof}

\section{The graph $E_P$ and the prime spectrum of $L_K(E_P)$}

In the previous two sections we have developed all the necessary ideas to put us in position to present the key construction, in which we build a directed graph  $E_P$ from any poset $P$.  In the sequel $P$ will always be assumed to be nonempty.

\begin{definition} \label{mainconstr}
Given a partially ordered set $\, (P, \leq)$ we define the graph $E_P$ as follows:
$$E^0_P = \{v_p \mid p \in P\} \ \ \ \ \mbox{ and }  \ \ \ \ \ E^1_P = \{e_{p,q}^i \mid i \in \N,  \text{ and } p,q \in P \text{ satisfy } p>q\},$$ where $\, \so(e_{p,q}^i) = v_p$ and $\, \ra(e_{p,q}^i) = v_q$ for all $i \in \N$.
\hfill $\Box$
\end{definition}

Less formally, $E_P$ is built from $P$ by viewing the elements of $P$ as vertices, and putting countably infinitely many edges from $v_p$ to $v_q$ whenever $p > q$ in $P$.    

\begin{example}\label{EPExample}
Let $P$ be the poset with the following Hasse diagram.
$$\xymatrix{ 
    & {\bullet}^{p} \ar@{-}[dr]   &    \\
{\bullet}^q \ar@{-}[ur]  &  & {\bullet}^r \ar@{-}[dl]  \\
  & {\bullet}^s \ar@{-}[ul]  &  \\
}$$
\noindent
Then $E_P$ is the graph below.
$$\xymatrix{ 
    & {\bullet}^{v_p} \ar[dl]_\infty  \ar[dr]^\infty  \ar[dd]^\infty  &    \\
{\bullet}^{v_q} \ar[dr]_\infty  &  & {\bullet}^{v_r} \ar[dl]^\infty  \\
  & {\bullet}^{v_s} \ar@{-}[ul]  &  \\
}$$

\vspace{-.25in}
\hfill $\Box$
\end{example}

\medskip

By inserting infinitely many edges between all connected vertices, the graph  $E_P$ and  the  poset $\Spec(L_K(E_P))$  are endowed with particularly nice properties, as the next lemma shows.   (As a reminder, for any graph $F$ and $v\in F^0$, $\MT(v)$ denotes the set $\{w\in F^0  \ | \ w\geq v\}$.)  

\begin{lemma} \label{chrislemma}
Let $\, (P, \leq)$ be a partially ordered set.   
\begin{enumerate}
\item[$(1)$] For all $p,q \in P$ we have $p > q$ if and only if $v_p > v_q$ if and only if $\, \MT(v_p) \subset \MT(v_q)$.
\item[$(2)$] For all $p,q \in P$ we have $p \geq q$ if and only if $v_p \geq v_q$ if and only if $\, \MT(v_p) \subseteq \MT(v_q)$.
\item[$(3)$] $E_P$ is acyclic.
\item[$(4)$] $(E^0_P, \leq)$ is partially ordered and is order-isomorphic to $\, (P, \leq)$. 
\item[$(5)$] An ideal $I$ of $L_K(E_P)$ is prime if and only if it is of the form $I =\langle H \rangle$, where $H$ is a proper subset of $E_P^0$ such that $E_P^0\setminus H$ satisfies \emph{MT1} and \emph{MT3}.
\end{enumerate}
\end{lemma}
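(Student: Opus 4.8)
The plan is to prove each of the five assertions in turn, noting that (1)--(4) are essentially bookkeeping about the combinatorial structure of $E_P$, while (5) is where the real content lies, as it must combine Rangaswamy's classification (Theorem~\ref{primeclass}) with the special features of $E_P$ via Corollary~\ref{primecor}.

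First I would establish (1). By construction, $p > q$ in $P$ yields (infinitely many) edges $e_{p,q}^i$ from $v_p$ to $v_q$, so there is a path $v_p \to v_q$, giving $v_p > v_q$; conversely, any edge in $E_P$ arises from a strict relation in $P$, and since an edge goes from $v_a$ to $v_b$ only when $a > b$, transitivity of $>$ in $P$ shows that a path from $v_p$ to $v_q$ forces $p > q$. For the equivalence with $\MT(v_p) \subsetneq \MT(v_q)$: if $v_p > v_q$ then any $w \geq v_p$ satisfies $w \geq v_q$ by transitivity, so $\MT(v_p) \subseteq \MT(v_q)$, and the inclusion is strict since $v_q \in \MT(v_q) \setminus \MT(v_p)$ (we need $v_q \not\geq v_p$, which holds because otherwise $p > q$ and $q \geq p$ would contradict antisymmetry/irreflexivity in $P$). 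The reverse implication: $\MT(v_p) \subsetneq \MT(v_q)$ gives some $w \geq v_q$ with $w \not\geq v_p$; but actually the cleaner route is that $v_p \in \MT(v_p) \subseteq \MT(v_q)$ forces $v_p \geq v_q$, and strictness of the inclusion rules out $v_p = v_q$, so $v_p > v_q$. Statement (2) is then immediate by adding the reflexive case $p = q$ (equivalently $v_p = v_q$, equivalently $\MT(v_p) = \MT(v_q)$ — here one uses that distinct vertices have distinct $\MT$'s, which follows from (1)). Statement (3) follows because any cycle in $E_P$ would give a path from a vertex $v_p$ back to itself, hence $p > p$ by the path-to-relation argument, contradicting irreflexivity of $>$ in $P$. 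Statement (4) follows from (2): the map $p \mapsto v_p$ is a bijection $P \to E_P^0$ which is order-preserving and order-reflecting for $\geq$ by (2), hence $(E_P^0, \geq) \cong (P, \geq)$, and antisymmetry is inherited; one just needs to be mildly careful about whether one states this for $\leq$ or $\geq$, but the construction makes $p \geq q \iff v_p \geq v_q$, so it is an order-isomorphism of the stated posets.

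The main step is (5), and the key observation is that $E_P$ has \emph{no breaking vertices}: although vertices of $E_P$ may be infinite emitters (any $v_p$ with something strictly below it emits infinitely many edges to each such vertex), for a hereditary saturated $H \subseteq E_P^0$ and $v = v_p \notin H$, the set $\so^{-1}(v_p) \cap \ra^{-1}(E_P^0 \setminus H)$ consists of \emph{all} the edges $e_{p,q}^i$ with $v_q \notin H$; since there are infinitely many edges for each such $q$, this set is either empty or infinite — never finitely many and nonzero. Hence $B_H = \emptyset$ for every hereditary saturated $H$. Combined with acyclicity from (3), Corollary~\ref{primecor} applies directly: $I$ is prime iff $I = \langle H\rangle$ with $H \subsetneq E_P^0$ and $E_P^0 \setminus H$ a maximal tail, i.e., satisfying MT1, MT2, and MT3. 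It remains only to observe that for subsets of $E_P^0$, MT2 is automatic: a regular vertex $v_p$ (finite out-degree, not a sink) in $E_P$ must be a sink, because every non-sink vertex of $E_P$ emits infinitely many edges — so "regular vertex of $E_P$" is vacuous, and MT2 holds trivially for any subset. Therefore "MT1, MT2, MT3" collapses to "MT1 and MT3", giving the stated form.

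The part I expect to require the most care is pinning down the breaking-vertex claim and the MT2-triviality cleanly, since both hinge on the dichotomy "a vertex of $E_P$ is either a sink or an infinite emitter (indeed emits $\aleph_0$ edges to each vertex strictly below it)." Once that dichotomy is stated explicitly, the breaking-vertex condition $1 \le |\so^{-1}(v) \cap \ra^{-1}(E_P^0 \setminus H)| < \aleph_0$ is visibly unsatisfiable and the regularity hypothesis in MT2 is visibly vacuous, so both reductions go through. The rest is a direct citation of Corollary~\ref{primecor}. I would also double-check the edge case where $P$ is a single point: then $E_P$ is a single sink, $L_K(E_P) \cong K$, and the only maximal tail is $E_P^0$ itself with $H = \emptyset$, consistent with the statement (and with $\langle\emptyset\rangle = 0$ being the unique — zero — prime ideal of $K$).
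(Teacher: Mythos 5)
Your proposal is correct and follows essentially the same route as the paper's proof: path-to-relation translation via transitivity for (1)--(4), and for (5) the observation that every hereditary saturated $H$ has $B_H=\emptyset$ (edge counts into $E_P^0\setminus H$ are $0$ or $\aleph_0$) together with the vacuity of MT2 (no regular vertices), followed by an appeal to Corollary~\ref{primecor}. No gaps.
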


\begin{proof}
(1) Let $p,q \in P$ be distinct elements. If $p > q$, then $\so(e_{p,q}^1) = v_p$ and $\ra(e_{p,q}^1) = v_q$, which implies that $v_p > v_q$. Conversely, if $v_p > v_q$, then there is a path $\alpha \in \pth(E)$ of the form $\alpha  = e_{p,a_1}^1e_{a_1,a_2}^1 \dots e_{a_n,q}^1$ ($a_1, \dots, a_n \in P$). It follows from the definition of $E_P$ that $p > a_1 > a_2 > \dots > a_n > q$, and hence that $p > q$, since $>$ is transitive.

Next, if $v_p > v_q$ and $w \in E^0_P$ is such that $w \geq v_p$, then clearly $w \geq v_q$. It follows that $\MT(v_p) \subseteq \MT(v_q)$. Moreover, it cannot be the case that $v_q \geq v_p$, since then we would have $p>q$ and $q \geq p$, which is not possible in a partially ordered set. Hence $v_q \notin \MT(v_p)$, and therefore $\MT(v_p) \subset \MT(v_q)$. Conversely, if $\MT(v_p) \subset \MT(v_q)$, then, in particular, $v_p \in \MT(v_q)$, and hence $v_p > v_q$, by the definition of $\MT(v_q)$.

(2) This follows immediately from (1).

(3) By the definition of $E_P$, there are no loops in $\pth(E_P)$. Thus for $E_P$ to be not acyclic there must be distinct vertices $v_p, v_q \in E^0_P$ ($p,q \in P$) such that $v_p > v_q$ and $v_q > v_p$. But, by (1) this can happen only if $p > q$ and $q > p$, which is not possible, since $\geq$ is antisymmetric. Hence $E_P$ must be acyclic.

(4) As mentioned above, $(E^0_P, \leq)$ is preordered. Hence, by (3), $(E^0_P, \leq)$ is partially ordered, and by (2), $(E^0_P, \leq) \cong (P, \leq)$.

(5) Let $v_p, v_q \in E^0_P$ be two vertices ($p,q \in P$). If $v_p > v_q$, then $|\so^{-1}(v_p) \cap \ra^{-1}(v_q)| = \aleph_0$, by (1) and the definition of $E_P$. Hence, for any $H \subseteq E^0_P$ either $|\so^{-1}(v_p) \cap \ra^{-1}(E^0_P \setminus H)| = 0$ or  $|\so^{-1}(v_p) \cap \ra^{-1}(E^0_P \setminus H)| \geq \aleph_0$. In particular, if $H \subseteq E^0_P$ is hereditary and saturated, then $B_H = \emptyset$.

Finally, note that every subset of $E^0_P$ satisfies MT2 vacuously, since $E^0_P$ contains no regular vertices. The desired conclusion now follows from Corollary~\ref{primecor} and (3).
\end{proof}

We  now  relate the structure of any partially ordered set $(P, \leq)$ to that of $\Spec(L_K(E_P))$.

\begin{proposition} \label{dccthrm}
Let $\, (P, \leq)$ be a partially ordered set and $K$ a field. Define the function 
$$\phi : P \to \Spec(L_K(E_P)) \ \mbox{ by setting } \ \phi(p) = \langle E^0_P \setminus \MT(v_p) \rangle \mbox{ for each } p \in P.$$
Then $\phi$ is an order-embedding. Moreover, $\phi$ is an order-isomorphism if and only if $\, (P, \leq)$ satisfies \emph{DCC}. 
\end{proposition}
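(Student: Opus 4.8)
I would split the statement into two independent parts: first that $\phi$ is a well-defined order-embedding (so that it is an order-isomorphism precisely when it is surjective), and then the characterization of surjectivity in terms of DCC. The whole argument runs through the intersection-with-$E^0_P$ trick and Rangaswamy's prime description as packaged in Lemma~\ref{chrislemma}(5).

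\textbf{$\phi$ is a well-defined order-embedding.} First I would note that for each $p\in P$ the set $\MT(v_p)$ is a nonempty maximal tail: it satisfies MT1 by transitivity of $\geq$, MT2 vacuously (no regular vertices in $E_P$), and MT3 with witness $w=v_p$. Hence by Lemma~\ref{chrislemma}(5), $\phi(p)=\langle E^0_P\setminus\MT(v_p)\rangle$ is indeed prime. The key computation is that $H:=E^0_P\setminus\MT(v_p)$ is hereditary and saturated (complement of a maximal tail) with $B_H=\emptyset$ (already observed in the proof of Lemma~\ref{chrislemma}(5)), so Remark~\ref{HcupstuffcapE0equalsH} yields $\phi(p)\cap E^0_P=E^0_P\setminus\MT(v_p)$. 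Using the equivalence $p\leq q\iff\MT(v_q)\subseteq\MT(v_p)$ from Lemma~\ref{chrislemma}(2), this gives $p\leq q\iff \phi(p)\cap E^0_P\subseteq\phi(q)\cap E^0_P$; the forward direction is order-preservation, and intersecting a containment $\phi(p)\subseteq\phi(q)$ with $E^0_P$ gives order-reflection. Thus $\phi$ is an order-embedding, and it only remains to decide when it is onto.

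\textbf{DCC $\Rightarrow$ surjective.} Given a prime ideal $I$, I would write $I=\langle H\rangle$ with $M:=E^0_P\setminus H$ a nonempty set satisfying MT1 and MT3 (Lemma~\ref{chrislemma}(5)). Transporting MT3 through Lemma~\ref{chrislemma}(2) shows $S:=\{p\in P\mid v_p\in M\}$ is downward directed in $P$; since $P$ has DCC, $S$ has a least element $p_0\in S$ (Remark~\ref{dccvsglb}). Then MT1 gives $\MT(v_{p_0})\subseteq M$, and minimality of $p_0$ in $S$ gives $M\subseteq\MT(v_{p_0})$, so $M=\MT(v_{p_0})$ and $I=\phi(p_0)$.

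\textbf{Not DCC $\Rightarrow$ not surjective, and the main obstacle.} If $P$ fails DCC, choose a strictly decreasing sequence $p_1>p_2>\cdots$ in $P$ and set $M=\bigcup_{i\geq1}\MT(v_{p_i})$, a nested union of maximal tails. By Lemma~\ref{taillemma}(1) it satisfies MT1 and MT2, and nestedness makes it satisfy MT3 via Lemma~\ref{taillemma}(2); since $M\neq\emptyset$, the ideal $\langle E^0_P\setminus M\rangle$ is prime. But $M$ has no minimum vertex: each $v_p\in M$ lies in some $\MT(v_{p_i})$, whence $v_{p_{i+1}}\in M$ sits strictly below $v_p$; meanwhile every $\MT(v_{p_0})$ has minimum $v_{p_0}$. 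So this prime ideal is not $\phi(p_0)$ for any $p_0$, and $\phi$ is not surjective. The bookkeeping with the MT-axioms is routine; the one step that needs genuine care is this last one — checking that the "limit" tail $\bigcup_i\MT(v_{p_i})$ is still a maximal tail (so that it truly names a prime ideal) while simultaneously failing to be a principal $\MT(v_{p_0})$. In other words, the crux is recognizing that failure of DCC in $P$ is detected exactly by the presence of a maximal tail of $E_P$ having no minimum vertex.
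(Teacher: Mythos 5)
Your proposal is correct and follows essentially the same route as the paper: primeness and order-embedding via Lemma~\ref{chrislemma}(5), Remark~\ref{HcupstuffcapE0equalsH}, and the $\MT$-containment equivalences; surjectivity under DCC via a least element of the downward directed index set; and failure of surjectivity via the union $\bigcup_i \MT(v_{p_i})$ being a maximal tail that is not of the form $\MT(u)$. The only (immaterial) differences are that you invoke Remark~\ref{dccvsglb} to get a least element directly where the paper argues uniqueness of a minimal element from MT3, and you show the limit tail has no minimum vertex where the paper derives a containment contradiction.
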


\begin{proof}
First note that for any vertex $u$ (in any graph), $\MT(u)$ is nonempty and satisfies MT1 and MT3. Hence, by Lemma~\ref{chrislemma}(5), $\langle E^0_P \setminus \MT(v_p) \rangle \in \Spec(L_K(E_P))$ for all $p \in P$. 

Now let $p,q \in P$ and suppose that $q \leq p$. Then $\MT(v_p) \subseteq \MT(v_q)$, by Lemma~\ref{chrislemma}(2), and hence $\langle E^0_P \setminus \MT(v_q) \rangle \subseteq \langle E^0_P \setminus \MT(v_p) \rangle$. That is, $\phi(q) \subseteq \phi(p)$. Hence $\phi$ is order-preserving.

Next, suppose that $\phi(q) \subseteq \phi(p)$. Then $\langle E^0_P \setminus \MT(v_q) \rangle \subseteq \langle E^0_P \setminus \MT(v_p) \rangle$. By Remark~\ref{HcupstuffcapE0equalsH}, we have $$E^0_P \setminus \MT(v_q) = \langle E^0_P \setminus \MT(v_q) \rangle \cap E^0_P \subseteq \langle E^0_P \setminus \MT(v_p) \rangle \cap E^0_P = E^0_P \setminus \MT(v_p),$$ and hence $\MT(v_p)\subseteq \MT(v_q)$. By Lemma~\ref{chrislemma}(2), this implies that $q \leq p$, and hence $\phi$ is order-reflecting, and therefore an order-embedding.

\smallskip

To prove the final claim, suppose that $(P, \leq)$ satisfies DCC. To conclude that $\phi$ is a bijection (and hence an order-isomorphism) it suffices to show that if $I$ is a prime ideal of $L_K(E_P)$, then $I =  \langle E^0_P \setminus \MT(u) \rangle$ for some $u \in E^0_P$. So let $I$ be a prime ideal of $L_K(E_P)$. By Lemma~\ref{chrislemma}(5), $I =\langle H \rangle$, where $H$ is a proper subset of $E_P^0$ such that $E_P^0\setminus H$ satisfies MT1 and MT3. Let $Q = \{p \mid v_p \in E_P^0\setminus H\} \subseteq P$. Since $P$ satisfies DCC, so does $Q$, and hence $Q$ must have a minimal element. Suppose that $p,q \in Q$ are two minimal elements. Since $v_p, v_q \in E_P^0\setminus H$ and this set satisfies MT3, there must exist some $r \in Q$ such that $v_r \in E_P^0\setminus H$, $v_p \geq v_r$, and $v_q \geq v_r$. By Lemma~\ref{chrislemma}(2), this implies that $p \geq r$ and $q \geq r$. But since $p$ and $q$ are minimal, we conclude that $p=r=q$, and hence $Q$ must have a unique minimal element $r$. Now since $E_P^0\setminus H$ satisfies MT3, it follows that for any $v_p \in E_P^0\setminus H$ we have $v_p \geq v_r$, and hence $E_P^0\setminus H \subseteq \MT(v_r)$. Also, if $v_p \in E^0$ is any vertex such that $v_p \geq v_r$, then $v_p \in E_P^0\setminus H$, since this set satisfies MT1. Therefore $E_P^0\setminus H = \MT(v_r)$, and hence $H = E_P^0\setminus \MT(v_r)$, as desired.

Conversely, suppose that $\phi$ is an order-isomorphism. Then every prime ideal of $L_K(E_P)$ must be of the form $\langle E^0_P \setminus \MT(u) \rangle$ for some $u \in E^0_P$. Seeking a contradiction, suppose that $(P,\leq)$ does not satisfy DCC. Then there is a subset $\{p_i \mid i \in \N\}$ of $P$ satisfying $p_0 > p_1 > p_2 > \dots$. By Lemma~\ref{chrislemma}(1), $$\MT(v_{p_0}) \subset \MT(v_{p_1}) \subset \MT(v_{p_2}) \subset \dots,$$ and by Lemma~\ref{taillemma}, $M = \bigcup_{i \in \N} \MT(v_{p_i})$ is a maximal tail, since each $\MT(v_{p_i})$ is. Hence, by Lemma~\ref{chrislemma}(5), $\langle E^0_P \setminus M \rangle$ is a prime ideal of $L_K(E_P)$. However, $M$ cannot be of the form $M=\MT(u)$ for any $u \in E^0_P$, since otherwise $u \in \MT(v_{p_i})$ for some $i \in \N$, and then we would have $M = \MT(v_{p_i})$. This gives the desired contradiction, and hence $(P,\leq)$ must satisfy DCC. 
\end{proof}

Our next goal is to describe $\Spec(L_K(E_P))$ for an arbitrary partially ordered set $(P, \leq)$. Towards that end we shall require the following notation.

\begin{definition} \label{precdef}
Let $\, (P, \leq)$ be a partially ordered set. Define a binary relation $\, \preceq$ on the power set $\mathcal{P}(P)$ of $P$ as follows. Given $S_1, S_2 \in \mathcal{P}(P) \setminus \{\emptyset\}$, write $S_1 \preceq S_2$ if for every $s_2 \in S_2$ there exists $s_1 \in S_1$ such that $s_1 \leq s_2$. If $S_1 \preceq S_2$ and $S_2 \preceq S_1$, then we write $S_1 \approx S_2$.  \hfill $\Box$ 
\end{definition}

It is routine to verify that $(\mathcal{P}(P), \preceq)$ is a preordered set,  that $\approx$ is an equivalence relation on $\mathcal{P}(P)$, and that  $\preceq$ induces a partial order on the set $\mathcal{P}(P)/\approx$ of $\approx$-equivalence classes in $\mathcal{P}(P)$. Given $S \in \mathcal{P}(P)$ we shall denote the $\approx$-equivalence class of $S$ by $[S]$.

\begin{definition} \label{leqAdef}
Given a partially ordered set $\, (P, \leq)$, let $$\AT(P) = P \cup \{x_{[S]} \mid S \subseteq P \text{ is downward directed with no least element}\}.$$ Further, we extend $\, \leq$ to a binary relation $\, \leq_\AT$ on $\AT(P)$, as follows.  For all $p,q \in \AT(P)$ let $p \leq_\AT q$ if one of the following holds:
\begin{enumerate}
\item[$(1)$] $p,q \in P$ and $p \leq q$;
\item[$(2)$] $p \in P$, $q = x_{[S]} \in \AT(P)\setminus P$, and $p \leq s$ for all $s \in S$;
\item[$(3)$] $p = x_{[S]} \in \AT(P)\setminus P$, $q \in P$, and $s \leq q$ for some $s \in S$;
\item[$(4)$] $p,q \in \AT(P)\setminus P$, $p = x_{[S]}$, $q = x_{[T]}$, and $S \preceq T$.  \hfill $\Box$
\end{enumerate}
\end{definition}

\medskip

Loosely speaking, to construct $\AT(P)$ from $P$ we {\it adjoin} a greatest lower bound to each downward directed subset $S$ of $P$ containing no least element.   (We note that $S$ might already have a greatest lower bound in $P$. Even so, we still adjoin a ``new" greatest lower bound for $S$ in $\AT(P)$.)     

\begin{example}\label{A(P)Example}
Let $P = \{0\} \cup \{ \frac{1}{n} \ | \ n \in \Z^+ \}$ with the usual order.   Clearly  $S = \{ \frac{1}{n} \ | \ \in \Z^+ \} $ is a subset of $P$ which contains no least element.   Clearly also any infinite subset $S^\prime$ of $S$ has $[S^\prime] = [S]$, while any finite subset of $S$ contains a least element. So $$\AT(P) =  P \cup \{x_{[S]}\} ,$$ where $0 <_\AT x_{[S]} <_\AT \frac{1}{n}$ for all $n \in \Z^+$.  
\hfill $\Box$
\end{example}

Now invoking Remark~\ref{dccvsglb}, we easily get the following.

\begin{lemma}\label{dcciffA(P)equalsP}
Let $P$ be any partially ordered set.  Then $P$ has \emph{DCC} if and only if  $\AT(P) = P$. 
\end{lemma}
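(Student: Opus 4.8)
The plan is to prove both directions of the equivalence by working directly from the definitions, using Remark~\ref{dccvsglb} and the description of $\AT(P)$ in Definition~\ref{leqAdef}.

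\textbf{The $(\Leftarrow)$ direction.} I would argue the contrapositive: suppose $P$ does not have DCC. Then there is a subset $S = \{p_0 > p_1 > p_2 > \cdots\}$ which is an infinite descending chain. This $S$ is downward directed (given $p_i, p_j$, the larger-indexed one lies below both) and has no least element (it has no minimal element at all, being an infinite descending chain). Hence $x_{[S]}$ is an element of $\AT(P)$. The one small thing to check is that $x_{[S]} \notin P$, i.e.\ that the adjoined symbol is genuinely new and not identified with some element of $P$ — this is immediate from the construction, since $\AT(P)\setminus P$ consists precisely of the symbols $x_{[S]}$ and these are by definition distinct from elements of $P$ (the union in Definition~\ref{leqAdef} is understood to be disjoint). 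Therefore $\AT(P) \neq P$.

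\textbf{The $(\Rightarrow)$ direction.} Suppose $P$ has DCC. By Remark~\ref{dccvsglb}, every downward directed subset $S$ of $P$ has a least element (a downward directed set with a minimal element has that as its least element, and DCC guarantees a minimal element). Consequently there are \emph{no} downward directed subsets $S \subseteq P$ without a least element, so the index set for the adjoined symbols $\{x_{[S]}\}$ is empty, and $\AT(P) = P$ as sets. It remains to note that the order $\leq_\AT$ restricted to $P$ coincides with $\leq$, which is exactly clause~(1) of Definition~\ref{leqAdef}; so $\AT(P) = P$ as posets.

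\textbf{Main obstacle.} There is essentially no obstacle here — this is a bookkeeping lemma whose content is entirely carried by Remark~\ref{dccvsglb}. The only point requiring a word of care is the first direction: one must exhibit a \emph{downward directed} subset with no least element, and the natural candidate (an infinite descending chain witnessing the failure of DCC) works, but it is worth explicitly noting that an infinite descending chain is downward directed. One should also be slightly careful that "no least element" (the condition defining which $S$ get a new symbol) and "no minimal element / infinite descending chain" agree for the relevant sets: for a totally ordered set these coincide, which covers the descending-chain witness, and that is all that is needed. I would keep the whole proof to a few sentences.
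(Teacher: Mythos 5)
Your proof is correct and follows the same route as the paper, which simply states that the lemma follows by invoking Remark~\ref{dccvsglb} (DCC forces every downward directed subset to have a least element, so no symbols $x_{[S]}$ are adjoined; conversely a failure of DCC yields an infinite descending chain, which is downward directed with no least element). The paper leaves all the bookkeeping implicit, and your writeup just spells it out.
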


In particular, we note that the operation $\AT$ on posets should not be viewed as a closure operation, i.e., $\AT(\AT(P))$ need not equal $\AT(P)$.  (This is  readily seen by considering the poset $P$ of Example~\ref{A(P)Example}.)  

\medskip

Although we have already defined the binary relation $\leq_\AT$ on $\AT(P)$ for any poset $P$, we have yet to make any claims regarding  relational properties of $\leq_\AT$. While the reader's likely hunch that $\leq_\AT$ is a partial order on $\AT(P)$  is indeed true, a direct first-principles verification of this fact  requires  some significant effort (including checking numerous cases). As it turns out, this fact will follow as a nice byproduct of the following theorem, which is  the main result of this article.

\begin{theorem} \label{EPspec}
Let $\, (P, \leq)$ be a partially ordered set and $K$ a field. Then $\, (\AT(P),  \leq_\AT)$ is a partially ordered set, and $$\, (\Spec(L_K(E_P)), \subseteq) \ \cong \  (\AT(P), \leq_\AT).$$
\end{theorem}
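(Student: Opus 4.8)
The plan is to build an explicit bijection $\Psi \colon \AT(P) \to \Spec(L_K(E_P))$ and check it is an order-isomorphism; the partial-order claim for $\leq_\AT$ then comes for free, since a set carrying a bijection onto a poset that is simultaneously order-preserving and order-reflecting must itself be a poset under the transported relation. First I would recall from Lemma~\ref{chrislemma}(5) that the prime ideals of $L_K(E_P)$ are exactly the ideals $\langle E^0_P \setminus M\rangle$ where $M \subseteq E^0_P$ is nonempty and satisfies MT1 and MT3 (MT2 being vacuous). Via the identification $E^0_P \leftrightarrow P$ of Lemma~\ref{chrislemma}(4), and using that MT1 just says $M$ is upward closed, the task reduces to a purely poset-theoretic one: understand the poset (under reverse inclusion) of nonempty upward-closed $M \subseteq P$ satisfying the condition ``for all $u,v \in M$ there is $w \in M$ with $w \le u$ and $w \le v$'' — call these \emph{good} subsets. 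So the theorem becomes: the poset of good subsets of $P$ ordered by $\supseteq$ is order-isomorphic to $(\AT(P), \leq_\AT)$.

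Next I would construct the correspondence between good subsets and elements of $\AT(P)$. Given $p \in P$, the set $\MT(v_p) \leftrightarrow {\uparrow}p := \{q \in P \mid q \ge p\}$ is good (it has least element $p$). Given a downward directed $S \subseteq P$ with no least element, the set $M_S := \bigcup_{s \in S} {\uparrow}s = \{q \in P \mid q \ge s \text{ for some } s \in S\}$ is upward closed, and it is good precisely because $S$ is downward directed (given $u,v \in M_S$, pick $s,t \in S$ below them, then $r \in S$ with $r \le s, r \le t$, so $r \in M_S$ witnesses MT3); moreover $M_S$ depends only on $[S]$, and $M_S$ has no least element because $S$ has none (if $w$ were least in $M_S$ then $w \ge s$ for some $s \in S$ and $w \le s'$ for all $s' \in S$, forcing $w$ into $S$ as its least element). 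I would then define $\Psi(p) = \langle E^0_P \setminus {\uparrow}p\rangle$ and $\Psi(x_{[S]}) = \langle E^0_P \setminus M_S\rangle$. The main work is the three-part verification: (a) \emph{surjectivity} — every good $M$ arises this way; (b) \emph{injectivity}; (c) \emph{order-preserving and order-reflecting}. For (a): a good $M$ either has a least element $p$, in which case $M = {\uparrow}p$ (by MT1 and minimality, using that good + least element forces $M = {\uparrow}p$), giving $\Psi(p)$; or $M$ has no least element, in which case I claim $M$ itself is downward directed — indeed MT3 for $M$ says exactly that any two elements of $M$ have a common lower bound \emph{within} $M$ — so $S := M$ is downward directed with no least element and $M = M_S$, giving $\Psi(x_{[M]})$. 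For (b), injectivity follows by reading off $M$ from $\Psi$ via $\langle E^0_P \setminus M\rangle \cap E^0_P = E^0_P \setminus M$ (Remark~\ref{HcupstuffcapE0equalsH}) and then noting: $\Psi$ sends distinct elements of $P$ to distinct good sets with least elements; sends $x_{[S]}$, $x_{[T]}$ with $S \not\approx T$ to distinct sets because $M_S = M_T$ forces $S \approx T$; and never sends an element of $P$ to the same set as some $x_{[S]}$, since ${\uparrow}p$ has a least element while $M_S$ does not.

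For the order comparison I would translate $\supseteq$ on good sets through the four cases of Definition~\ref{leqAdef}. Since $\langle \cdot \rangle$ is inclusion-preserving and, by Remark~\ref{HcupstuffcapE0equalsH}, $\langle E^0_P \setminus M\rangle \subseteq \langle E^0_P \setminus M'\rangle \iff E^0_P \setminus M \subseteq E^0_P \setminus M' \iff M' \subseteq M$, it suffices to check, for each pair of types of elements of $\AT(P)$, that $a \leq_\AT b$ holds iff the corresponding good sets satisfy $(\text{set for }b) \subseteq (\text{set for }a)$. Case (1) is Lemma~\ref{chrislemma}(2) (namely ${\uparrow}q \subseteq {\uparrow}p \iff p \le q$, wait — rather $p \le q \iff {\uparrow}q \subseteq {\uparrow}p$). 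Case (2): $p \le s$ for all $s \in S$ iff ${\uparrow}p \supseteq {\uparrow}s$ for all $s$ iff ${\uparrow}p \supseteq M_S$. Case (3): $s \le q$ for some $s \in S$ iff ${\uparrow}q \subseteq {\uparrow}s \subseteq M_S$. Case (4): $S \preceq T$ iff for every $t \in T$ some $s \in S$ has $s \le t$, i.e.\ ${\uparrow}t \subseteq M_S$ for all $t$, i.e.\ $M_T \subseteq M_S$. Each equivalence is a one-line unravelling, and together they show $\Psi$ is an order-embedding; combined with surjectivity it is an order-isomorphism, and transporting the partial order back shows $(\AT(P), \leq_\AT)$ is a poset.

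I expect the main obstacle to be case (a) of surjectivity — specifically the dichotomy argument that a good set with no least element is itself downward directed and hence equals $M_S$ for $S$ its own underlying set, together with the bookkeeping needed to see that this $S$ is the ``right'' representative of its $\approx$-class. A secondary subtlety is making sure the $\approx$-equivalence is handled cleanly: one must check $M_S = M_T$ exactly when $S \approx T$ (one direction: $S \preceq T$ clearly gives $M_T \subseteq M_S$; for the converse, $M_T \subseteq M_S$ means every ${\uparrow}t$ lies in $M_S$, so each $t \in T$ lies above some $s \in S$, i.e.\ $S \preceq T$), so that $\Psi$ is well-defined on equivalence classes and injective on $\AT(P) \setminus P$. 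Everything else is routine once the good-subset reformulation is in place.
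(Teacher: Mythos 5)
Your proof is correct. The overall skeleton matches the paper's: both extend the order-embedding $\phi$ of Proposition~\ref{dccthrm} to a map on all of $\AT(P)$, check well-definedness on $\approx$-classes, and verify the four cases of Definition~\ref{leqAdef} against inclusion of ideals, with the poset claim for $\leq_\AT$ obtained by transport at the end. Where you genuinely diverge is in how the image of $x_{[S]}$ is represented and how surjectivity is proved. The paper sets $\varphi(x_{[S]}) = \bigcap_{s\in S}\langle E^0_P\setminus\MT(v_s)\rangle$ and must then invoke Proposition~\ref{primeintersect} (primeness of downward directed intersections of primes) together with Remark~\ref{HcupstuffcapE0equalsH} to show this intersection of ideals equals $\langle\bigcap_{s\in S}(E^0_P\setminus\MT(v_s))\rangle$, which is the bulk of its Claim 1. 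You instead work entirely at the level of vertex sets, sending $x_{[S]}$ directly to $\langle E^0_P\setminus M_S\rangle$ with $M_S=\bigcup_{s\in S}\MT(v_s)$, and reduce the whole theorem to a correspondence between maximal tails of $E_P$ under reverse inclusion and elements of $\AT(P)$. This buys you two things: Proposition~\ref{primeintersect} is never needed, and surjectivity becomes the clean dichotomy that a maximal tail either has a least element (hence is some $\MT(v_p)$) or is itself a downward directed set without least element serving as its own index. The only ring-theoretic input you use beyond Lemma~\ref{chrislemma}(5) is Remark~\ref{HcupstuffcapE0equalsH}, to see that $M\mapsto\langle E^0_P\setminus M\rangle$ is an order-reversing embedding on maximal tails; that step is essential and you have it in place. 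The cost is negligible; if anything your version isolates the poset-theoretic content more cleanly, at the price of not exhibiting the new primes explicitly as greatest lower bounds (intersections) of the old ones, which the paper later exploits in Claim 2 and in Section~\ref{AandRsection}.
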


\begin{proof}
Let $\phi : P \to \Spec(L_K(E_P))$ be as in Proposition~\ref{dccthrm}. The goal of Claims 1 and 2 below is to extend $\phi$ to a function $\varphi: \AT(P) \to \Spec(L_K(E_P))$. Then in Claims 3 and 4 we establish that $\varphi$ is a bijection that respects the binary relations $\leq_\AT$ and $\subseteq$. Both assertions of the theorem will then follow.

\smallskip

\underbar{Claim 1}: If $I \in \Spec(L_K(E_P)) \setminus \phi(P)$, then there exists a downward directed subset $S \subseteq P$ with no least element such that $I = \bigcap_{s\in S} \langle E^0_P \setminus \MT(v_s) \rangle$.   

To see this, we note that, by Lemma~\ref{chrislemma}(5), any such ideal $I$ must be of the form $I =\langle H \rangle$, where $H \subset E_P^0$ is such that $M = E_P^0\setminus H$ satisfies MT1 and MT3. Let $S = \{s \mid v_s \in M\} \subseteq P$. Then $M = \bigcup_{s \in S} \MT(v_s)$, since $M$ satisfies MT1. By Lemma~\ref{chrislemma}(4), $(M, \leq)$ is a partially ordered set (since $M \subseteq E^0_P$) and $(M, \leq) \cong (S, \leq)$. Since $M$ satisfies MT3, $(M, \leq)$ is downward directed, and hence so is $(S, \leq)$. Moreover, $S$ cannot have a least element $s \in S$, since otherwise we would have $M = \MT(v_s)$, and therefore $I = \langle E^0_P \setminus \MT(v_s) \rangle = \phi(v_s)$, contrary to the choice of $I$. Now, $$I = \langle E^0_P \setminus M \rangle = \Big\langle E^0_P \setminus \bigcup_{s \in S} \MT(v_s) \Big\rangle = \Big\langle \bigcap_{s\in S} E^0_P \setminus \MT(v_s) \Big\rangle,$$ and hence we seek to show that the last expression in the previous display is equal to $\bigcap_{s\in S} \langle E^0_P \setminus \MT(v_s) \rangle$.

Let $x \in \langle \bigcap_{s\in S} E^0_P \setminus \MT(v_s) \rangle$ be any element. Then $x \in \langle E^0_P \setminus \MT(v_s) \rangle$ for all $s \in S$, and hence $x \in \bigcap_{s\in S} \langle E^0_P \setminus \MT(v_s) \rangle$, showing that $$\Big\langle \bigcap_{s\in S} E^0_P \setminus \MT(v_s) \Big\rangle \subseteq \bigcap_{s\in S} \langle E^0_P \setminus \MT(v_s) \rangle.$$ For the opposite inclusion, note that since $S$ is downward directed, $\{\langle E^0_P \setminus \MT(v_s) \rangle \mid s \in S\}$ is a downward directed subset of $\Spec(L_K(E))$, by  Lemma~\ref{chrislemma}(2), and hence $\bigcap_{s\in S} \langle E^0_P \setminus \MT(v_s) \rangle$ is a prime ideal, by Proposition~\ref{primeintersect}. It follows, by Lemma~\ref{chrislemma}(5), that $\bigcap_{s\in S} \langle E^0_P \setminus \MT(v_s) \rangle = \langle H' \rangle$ for some $H' \subset E_P^0$. Let $v \in H'$ be any vertex. Then for all $s \in S$ we have $v \in \langle E^0_P \setminus \MT(v_s) \rangle$, and hence $v \in E^0_P \setminus \MT(v_s)$, by Remark~\ref{HcupstuffcapE0equalsH}. Thus $v \in \bigcap_{s\in S} E^0_P \setminus \MT(v_s) $, from which we conclude that $$\bigcap_{s\in S} \langle E^0_P \setminus \MT(v_s) \rangle = \langle H' \rangle \subseteq \Big\langle \bigcap_{s\in S} E^0_P \setminus \MT(v_s) \Big\rangle,$$ and therefore $$\bigcap_{s\in S} \langle E^0_P \setminus \MT(v_s) \rangle = \Big\langle \bigcap_{s\in S} E^0_P \setminus \MT(v_s) \Big\rangle.$$
This establishes Claim 1.  

For any downward directed subset $S \subseteq P$ with no least element, define $$I_{[S]} = \bigcap_{s\in S} \langle E^0_P \setminus \MT(v_s) \rangle.$$

\underbar{Claim 2}:  If $S,S' \subseteq P$ are downward directed subsets with no least elements, such that $S \approx S'$, then $I_{[S]} = I_{[S']}.$  

Noting that if $S \approx S'$, then $\bigcup_{s\in S}\MT(v_s) = \bigcup_{s\in S'}\MT(v_s)$, by Lemma~\ref{chrislemma}(2), this claim follows by using the final displayed conclusion of Claim 1, which yields 
$$I_{[S]} = \bigcap_{s\in S} \langle E^0_P \setminus \MT(v_s) \rangle = \Big\langle \bigcap_{s\in S} E^0_P \setminus \MT(v_s) \Big\rangle = \Big\langle \bigcap_{s\in S'} E^0_P \setminus \MT(v_s) \Big\rangle = \bigcap_{s\in S'} \langle E^0_P \setminus \MT(v_s) \rangle = I_{[S']} .$$ 

We now define  the function $\varphi : \AT(P) \to \Spec(L_K(E_P))$ by setting 
$$\varphi (p)=\left\{
\begin{array}{ll}
\phi(p) & \mbox{if } p \in P\\
I_{[S]} & \mbox{if } p = x_{[S]} \in \AT(P)\setminus P
\end{array}\right..$$
By Claim 2, $\varphi$ is well-defined, and by Claim 1, $\varphi$ is surjective.   

\smallskip

\underbar{Claim 3}: $\varphi$ is a bijection. 

To establish this claim, we recall that by Proposition~\ref{dccthrm}, $\phi: P \to \phi(P)$ is a bijection, and hence it is enough to show that if $S,S' \subseteq P$ are downward directed subsets with no least elements such that $S \not\approx S'$, then $I_{[S]} \neq I_{[S']}$. For this purpose suppose, without loss of generality, that $S \not\preceq S'$, that is there exists $s' \in S'$ such that $s \not\leq s'$ for all $s \in S$. Then by Lemma~\ref{chrislemma}(2), $\MT(v_{s'}) \not\subseteq \MT(v_s)$ for all $s \in S$, and hence $v_{s'}\notin \MT(v_s)$ for all $s \in S$. It follows that $\bigcup_{s\in S'} \MT(v_s) \not\subseteq \bigcup_{s\in S} \MT(v_s)$, and therefore, by Remark~\ref{HcupstuffcapE0equalsH}, $$I_{[S]} = \Big\langle \bigcap_{s\in S} E^0_P \setminus \MT(v_s) \Big\rangle \neq \Big\langle \bigcap_{s\in S'} E^0_P \setminus \MT(v_s)\Big\rangle = I_{[S']},$$ as desired.

\smallskip

\underbar{Claim 4}:  $p \leq_\AT q$ if and only if $\varphi (p) \subseteq \varphi (q)$ for any pair $p,q \in \AT(P)$.

With Definition~\ref{leqAdef} in mind, there are four cases to check. First, by Proposition~\ref{dccthrm}, $\phi$ is an order-embedding, so whenever $p,q \in P$ we have $p \leq_\AT q$ if and only if  $p \leq q$ if and only if $\varphi (p) \subseteq \varphi (q)$.  Second,   suppose that $p \in P$ and $q = x_{[S]} \in \AT(P)\setminus P$. Then 
\begin{eqnarray*}
p \leq_\AT q  & \Leftrightarrow  & p \leq s \text{ for all } s \in S \text{ (by Definition~\ref{leqAdef})} \\
& \Leftrightarrow & \phi (p) \subseteq \phi (s) \text{ for all } s \in S \text{ (by Proposition~\ref{dccthrm})} \\
& \Leftrightarrow & \langle E^0_P \setminus \MT(v_p) \rangle \subseteq \langle E^0_P \setminus \MT(v_s) \rangle \text{ for all } s \in S \\
& \Leftrightarrow & \langle E^0_P \setminus \MT(v_p) \rangle \subseteq I_{[S]} \\
& \Leftrightarrow & \varphi (p) \subseteq \varphi (q).
\end{eqnarray*}
\noindent
Third,  suppose that $p = x_{[S]} \in \AT(P)\setminus P$ and $q \in P$. Note that if $$I_{[S]} = \Big\langle \bigcap_{s\in S} E^0_P \setminus \MT(v_s) \Big\rangle \subseteq \langle E^0_P \setminus \MT(v_q) \rangle,$$ then $\bigcap_{s\in S} E^0_P \setminus \MT(v_s) \subseteq  E^0_P \setminus \MT(v_q)$, by Remark \ref{HcupstuffcapE0equalsH}, and hence $\bigcup_{s\in S} \MT(v_s) \supseteq \MT(v_q)$. Thus $v_q \in \MT(v_s)$ for some $s \in S$, and therefore $\MT(v_q) \subseteq \MT(v_s)$. It follows that $I_{[S]} \subseteq \langle E^0_P \setminus \MT(v_q) \rangle$ if and only if $\langle E^0_P \setminus \MT(v_s) \rangle \subseteq \langle E^0_P \setminus \MT(v_q) \rangle$ for some $s \in S$. Therefore
\begin{eqnarray*}
p \leq_\AT q & \Leftrightarrow & s \leq q \text{ for some } s \in S \text{ (by Definition~\ref{leqAdef})}\\
& \Leftrightarrow & \phi (s) \subseteq \phi (q) \text{ for some } s \in S \text{ (by Proposition~\ref{dccthrm})}\\
&\Leftrightarrow & \langle E^0_P \setminus \MT(v_s) \rangle \subseteq \langle E^0_P \setminus \MT(v_q) \rangle \text{ for some } s \in S\\
&\Leftrightarrow & I_{[S]} \subseteq \langle E^0_P \setminus \MT(v_q) \rangle\\
& \Leftrightarrow & \varphi (p) \subseteq \varphi (q).
\end{eqnarray*}
\noindent
Fourth and finally, suppose that $p,q \in \AT(P)\setminus P$, $p = x_{[S]}$, and $q = x_{[T]}$. Then 
\begin{eqnarray*}
p \leq_\AT q & \Leftrightarrow & \text{ for all } t \in T \text{ there exists } s \in S \text{ such that }
s \leq t \text{ (by Definition~\ref{leqAdef})}\\
& \Leftrightarrow & \text{ for all } t \in T \text{ there exists } s \in S \text{ such that }
\phi(s) \subseteq \phi(t) \text{ (by Proposition~\ref{dccthrm})}\\
& \Leftrightarrow & \text{ for all } t \in T \text{ there exists } s \in S \text{ such that }
\langle E^0_P \setminus \MT(v_s) \rangle \subseteq \langle E^0_P \setminus \MT(v_t) \rangle\\
& \Leftrightarrow & I_{[S]} \subseteq \langle E^0_P \setminus \MT(v_t) \rangle \text{ for all } t \in T\\
&\Leftrightarrow & I_{[S]} \subseteq I_{[T]}\\
&\Leftrightarrow & \varphi (p) \subseteq \varphi (q).
\end{eqnarray*}
\noindent
Thus $p \leq_\AT q$ if and only if $\varphi (p) \subseteq \varphi (q)$ for all $p,q \in \AT(P)$, which establishes Claim 4. 

From Claims 3 and 4 we conclude that $(\AT(P), \leq_\AT)$ is a partially ordered set, since $(\Spec(L_K(E_P)), \subseteq)$ is, and that $(\Spec(L_K(E_P)), \subseteq) \cong (\AT(P),  \leq_\AT)$.
\end{proof}

Using the fact that $(\AT(P), \leq_\AT)$ contains a copy of $(P, \leq)$, or by invoking Proposition~\ref{dccthrm}, we get the following. 

\begin{corollary}\label{PembedsinSpec}
For any partially ordered set $\, (P, \leq)$ there is an order-embedding of $\, (P, \leq)$ into $\, (\Spec(L_K(E_P)), \subseteq)$.
\end{corollary}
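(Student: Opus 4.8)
The statement to prove is Corollary~\ref{PembedsinSpec}: for any poset $(P,\leq)$ there is an order-embedding of $(P,\leq)$ into $(\Spec(L_K(E_P)),\subseteq)$.

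The plan is to derive this as an immediate consequence of the machinery already built. The cleanest route uses Proposition~\ref{dccthrm} directly: that proposition asserts that the map $\phi : P \to \Spec(L_K(E_P))$ defined by $\phi(p) = \langle E^0_P \setminus \MT(v_p)\rangle$ is an order-embedding (the order-isomorphism conclusion requires DCC, but the order-embedding conclusion holds unconditionally). So nothing further is needed — $\phi$ itself is the desired order-embedding. I would state this in one line.

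Alternatively, one can route through Theorem~\ref{EPspec}: that theorem gives $(\Spec(L_K(E_P)),\subseteq) \cong (\AT(P), \leq_\AT)$, and by the construction of $\AT(P)$ in Definition~\ref{leqAdef} (clause (1)), the inclusion map $P \hookrightarrow \AT(P)$ is an order-embedding since $\leq_\AT$ restricted to $P$ is just $\leq$. Composing the inclusion with the isomorphism of Theorem~\ref{EPspec} yields an order-embedding $P \to \Spec(L_K(E_P))$. Either argument suffices; I would present the Proposition~\ref{dccthrm} version as primary since it is the most direct, and perhaps remark that the $\AT(P)$ version gives the same conclusion.

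There is essentially no obstacle here — this is a corollary precisely because all the work has been done. The only thing to be careful about is to cite the unconditional half of Proposition~\ref{dccthrm} (the order-embedding claim, not the DCC-dependent isomorphism). So the proof is a single sentence.

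\begin{proof}
By Proposition~\ref{dccthrm}, the map $\phi : P \to \Spec(L_K(E_P))$ given by $\phi(p) = \langle E^0_P \setminus \MT(v_p)\rangle$ is an order-embedding. (Alternatively, the inclusion $P \hookrightarrow \AT(P)$ is an order-embedding by Definition~\ref{leqAdef}(1), and composing it with the order-isomorphism of Theorem~\ref{EPspec} yields the claim.)
\end{proof}
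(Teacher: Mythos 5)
Your proposal is correct and matches the paper exactly: the authors derive the corollary either from the fact that $(\AT(P),\leq_\AT)$ contains a copy of $(P,\leq)$ (via Theorem~\ref{EPspec}) or by invoking the unconditional order-embedding half of Proposition~\ref{dccthrm}, which are precisely the two routes you give. Nothing further is needed.
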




We close this section by noting that, by Lemma~\ref{dcciffA(P)equalsP},  Theorem~\ref{EPspec} is the generalization of the final statement of Proposition~\ref{dccthrm} to all partially ordered sets.

\section{The posets $\AT(P)$ and $\RT(P)$}\label{AandRsection}

In this section we identify, up to poset isomorphism, those partially ordered sets which can be expressed in the form $(\AT(P), \leq_\AT)$. 

\begin{definition} \label{conddef}
Let $\, (P, \leq)$ be a partially ordered set. For any $S \subseteq P$ and $p \in P$, we write $p=\mathrm{glb}(S)$ in case $S$ has a greatest lower bound in $P$, and it is equal to $p$. Also, let   
$$\RT(P) = \{ p \in P \mid p \neq \mathrm{glb}(S) \text{ for all } S \subseteq P  \text{ downward directed without least element}\}.$$
We view $\RT(P)$ as a partially ordered subset of $P$.  
\hfill $\Box$
\end{definition}

Rephrased, $\RT(P)$ is constructed from $P$ by \textit{removing} each element of $P$ which arises as the greatest lower bound of a downward directed subset of $P$ not containing a least element. As we shall see in Proposition~\ref{R(A(P))equalsP}, the operation $\RT$ on posets ``undoes" the effects of applying $\AT$. Understanding precisely how these two operations interact with each other will be key to classifying the posets of the form $(\AT(P), \leq_\AT)$.
 
\begin{example}\label{R(P)Example}
Let $P$ be the poset $\{0\} \cup \{ \frac{1}{n} \ | \ n \in \Z^+ \}$ of Example \ref{A(P)Example}. Then $\RT(P) = S =  \{ \frac{1}{n} \ | \ n \in \Z^+ \}$.  This is because $0 = \mathrm{glb}(S)$ and $S$ has no least element, so $0 \not\in \RT(P)$.  On the other hand, each element $\frac{1}{n}$ is in $\RT(P)$, because if $\frac{1}{n}$ is the greatest lower bound for some subset $Q$ of $P$, then $Q$ must be finite, and hence $\frac{1}{n}\in Q$. This also shows that $\RT(S) = S$.

Observing that $P \cong \AT(S)$, the above computation implies that $\RT(\AT(S)) = S.$ Arguing in a similar manner, we see also that $\RT(\AT(P)) = P$. (In this case $x_{[S]} \not\in \RT(\AT(P))$ because $x_{[S]} = \mathrm{glb}(S)$, while $0\in \RT(\AT(P))$ because $0$ is \textit{not} the greatest lower bound of $S$ in $\AT(P)$.)

Now applying the two operations in the opposite order, we have $\AT(\RT(S)) = \AT(S) \cong P \not\cong S$, whereas $\AT(\RT(P)) = \AT(S) \cong P$.
\hfill $\Box$
\end{example}

We require a bit more notation and several preliminary results in order to classify the partially ordered sets that can be expressed in the form $(\AT(P), \leq_\AT)$.

\begin{notation}\label{GLBDCDDnotation}
We assign the indicated names in case the poset $\, (P, \leq)$ satisfies the following germane properties.  

\medskip

(GLB) \ \  Every downward directed subset of $P$ has a greatest lower bound in $P$.

\medskip

 (DC) \ \ \ \   For every downward directed subset $S$ of $P$ and every $p \in \RT(P)$ satisfying 
 
 \qquad \qquad $p \geq \mathrm{glb}(S)$, we have $p \geq s$ for some $s \in S$.

\medskip

 (DD) \ \ \ \   For every downward directed subset $S$ of $P$ such that $\mathrm{glb}(S) \in P$, there exists 
 
 \qquad \qquad a downward directed subset $T$ of $\RT(P)$ satisfying $\mathrm{glb}(S) = \mathrm{glb}(T)$.  

\medskip
\noindent
 Of course GLB stands for \textit{greatest lower bound}. We have chosen the name DC to stand for \textit{directed compatibility}, while DD connotes \textit{directed discreteness}.  \hfill $\Box$ 
\end{notation}

Our reason for describing the condition in DD as ``discreteness" will be clarified in Proposition~\ref{KAPiffDD}. Also, in Theorem~\ref{bergman} we will show that for a poset $P$, condition GLB is equivalent to the condition that every totally ordered subset of $P$ has a greatest lower bound in $P$.

\begin{lemma} \label{largedd}
Let $\, (P, \leq)$ be a partially ordered set, let $S$ be a downward directed subset of $P$ such that $\, \mathrm{glb}(S) \in P$, and let $$S' = S \cup \{t \in P \mid \exists s \in S \text{ such that } s \leq t\}.$$ Then the following hold.
\begin{enumerate}
\item[$(1)$] The set $S'$ is downward directed, and $\, \mathrm{glb}(S) = \mathrm{glb}(S')$.
\item[$(2)$] If $S$ has no least element, then neither does $S'$, and for all $s \in S$ there exists $t \in S$ such that $t < s$.
\end{enumerate}
\end{lemma}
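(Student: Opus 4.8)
The plan is to prove the two parts essentially by unwinding the definition of $S'$, keeping track of where the hypothesis that $S$ is downward directed (and, in part (2), has no least element) is used.

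\textbf{Part (1).} First I would show $S'$ is downward directed. Take $a, b \in S'$. By definition of $S'$, there exist $s, s' \in S$ with $s \leq a$ and $s' \leq b$ (taking $s = a$ if $a \in S$, and similarly for $b$). Since $S$ is downward directed, there is $r \in S$ with $r \leq s$ and $r \leq s'$; then $r \in S \subseteq S'$ and $r \leq a$, $r \leq b$, so $S'$ is downward directed. Next, for the claim $\mathrm{glb}(S) = \mathrm{glb}(S')$: write $m = \mathrm{glb}(S)$, which exists in $P$ by hypothesis. Since $S \subseteq S'$, any lower bound of $S'$ is a lower bound of $S$, so it suffices to check that $m$ is itself a lower bound of $S'$ and that it is the greatest such. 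For the first, given $t \in S'$, pick $s \in S$ with $s \leq t$; then $m \leq s \leq t$. For ``greatest,'' if $y \in P$ is a lower bound of $S'$, it is in particular a lower bound of $S$, hence $y \leq m$. Thus $m = \mathrm{glb}(S')$.

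\textbf{Part (2).} Suppose $S$ has no least element. I would first prove the second assertion, which I expect is the crux: for every $s \in S$ there exists $t \in S$ with $t < s$. Fix $s \in S$. Since $s$ is not the least element of $S$, there is some $a \in S$ with $a \not\geq s$ (otherwise $s$ would be below everything in $S$). Using downward directedness of $S$, pick $t \in S$ with $t \leq s$ and $t \leq a$. If we had $t = s$, then $s \leq a$, contradicting $a \not\geq s$; so $t < s$, as desired. (Here the antisymmetry of $\leq$ and the strictness $t \neq s$ combine to give $t < s$.) Given this, I would deduce that $S'$ has no least element: if $m'$ were the least element of $S'$, then in particular $m'$ is a lower bound of $S' \supseteq S$, and since $S'$ is downward directed with a least element, $m' = \mathrm{glb}(S') = \mathrm{glb}(S) = m$ by part (1). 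But a least element of $S'$ lies in $S'$, so either $m \in S$ or there is $s \in S$ with $s \leq m$; in the latter case $m \leq s$ forces $s = m \in S$ by antisymmetry. So $m \in S$, i.e. $S$ has a least element, a contradiction.

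The only mild obstacle is getting the \emph{strict} inequality $t < s$ in part (2) cleanly — one must exhibit an element of $S$ not above $s$, which follows precisely because $s$ fails to be the least element of $S$, and then use directedness plus antisymmetry; everything else is a direct unwinding of definitions. No earlier results beyond the basic poset facts recalled in Section~\ref{IntroSection} (uniqueness of greatest lower bounds, antisymmetry) are needed.
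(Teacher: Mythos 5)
Your proof is correct and follows essentially the same route as the paper's: the same directedness argument for $S'$, the same two-sided comparison of lower bounds for $\mathrm{glb}(S) = \mathrm{glb}(S')$, and in part (2) the same use of an element $a \in S$ with $s \not\leq a$ together with directedness and antisymmetry to get $t < s$. The only difference is cosmetic — you prove the two assertions of (2) in the opposite order and spell out slightly more carefully why a least element of $S'$ must lie in $S$.
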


\begin{proof}
(1) Let $t_1, t_2 \in S'$ be any elements, and let $s_1, s_2 \in S$ be such that $s_1 \leq t_1$ and $s_2 \leq t_2$. Since $S$ is downward directed, there exists $p \in S \subseteq S'$ such that $p \leq s_1$ and $p \leq s_2$. Thus $p \leq t_1$ and $p \leq t_2$, showing that $S'$ is downward directed.

Since for all $t\in S'$ there exists $s \in S$ with $s \leq t$, it follows that $\mathrm{glb} (S) \leq t$. Thus $\mathrm{glb} (S)$ is a lower bound for $S'$. On the other hand, since $S\subseteq S'$, any lower bound for $S'$ must be $\leq \mathrm{glb}(S)$. Hence $\mathrm{glb} (S) = \mathrm{glb}(S')$.

(2) If $S'$ were to have a least element $s$, then $s \in S$, by the definition of $S'$, and  $s = \mathrm{glb}(S') = \mathrm{glb}(S)$ would also be the least element of $S$.

Now, let $s \in S$ be any element. Assuming that $S$ has no least element, there exists $p \in S$ such that $s\not\leq p$. Since $S$ is downward directed, there exists $t \in S$ such that $t \leq s$ and $t \leq p$. Finally, $s\not\leq p$ implies that $t \neq s$, and hence $t<s$.
\end{proof}

The poset $S'$ constructed in the previous lemma happens to be a \emph{filter} in $P$, i.e., a downward directed subset with the property that for all $s \in S'$ and $t \in P$, $s \leq t$ implies that $t \in S'$. In fact, $S'$ is the filter of $P$ \emph{generated} by $S$, i.e., the smallest filter containing $S$.

\begin{lemma} \label{glb-lemma}
Let $\, (P, \leq)$ be a partially ordered set, and let $S \subseteq \AT(P)$ be downward directed with no least element. Then there exists $S'' \subseteq P$ downward directed with no least element, such that $\, \mathrm{glb}(S) = \mathrm{glb}(S'')$ and for all $s'' \in S''$ there exists $s \in S$ satisfying $s \leq_\AT s''$.
\end{lemma}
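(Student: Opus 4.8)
The plan is to pass from the given downward directed $S \subseteq \AT(P)$ (with no least element) to a subset of $P$ in two moves. The first move handles the elements of $S$ that already lie in $P$, and the second move replaces each element of $S \setminus P$ (each of the form $x_{[T]}$ for some downward directed $T \subseteq P$ without least element) by the members of $T$ itself. Concretely, I would set
\[
S'' = (S \cap P) \ \cup \ \bigcup_{\,x_{[T]} \in S \setminus P} T .
\]
Since $x_{[T]} = \mathrm{glb}_{\AT(P)}(T)$ by the construction of $\AT(P)$ (and Definition~\ref{leqAdef}), each $x_{[T]}$ is, within $\AT(P)$, interchangeable with the set $T$ as far as lower bounds are concerned; this is what will let me show $\mathrm{glb}(S) = \mathrm{glb}(S'')$, where on the left the glb is computed in $\AT(P)$ and on the right it is a priori a glb in $\AT(P)$ of a subset of $P$ — but any lower bound in $\AT(P)$ of a subset of $P$ that happens to be better than a given one will be pinned down, and I will then need to check $\mathrm{glb}(S'')$ actually lies in $P$, or at worst adjust $S''$ as in Lemma~\ref{largedd} to a filter. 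The last clause of the statement, ``for all $s'' \in S''$ there exists $s \in S$ with $s \leq_\AT s''$,'' is immediate from the definition of $S''$: if $s'' \in S \cap P$ take $s = s''$; if $s'' \in T$ for some $x_{[T]} \in S \setminus P$, then $s = x_{[T]} \leq_\AT s''$ by clause (3) of Definition~\ref{leqAdef}.

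The key steps, in order, are: (i) verify $S'' \subseteq P$ — immediate. (ii) Show every lower bound of $S$ in $\AT(P)$ is a lower bound of $S''$ and conversely. For ``$\Rightarrow$'': if $y \leq_\AT s$ for all $s \in S$, and $s'' \in T$ with $x_{[T]} \in S$, then $y \leq_\AT x_{[T]} \leq_\AT s''$; if $s'' \in S \cap P$ it is already an element of $S$. For ``$\Leftarrow$'': if $y$ is a lower bound of $S''$, then for $s \in S \cap P$ we have $s \in S''$ so $y \leq_\AT s$; for $x_{[T]} \in S \setminus P$ we have $y \leq_\AT t$ for all $t \in T$, and since $x_{[T]}$ is the glb of $T$ in $\AT(P)$ this gives $y \leq_\AT x_{[T]}$. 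Hence $S$ and $S''$ have the same set of lower bounds in $\AT(P)$, so $\mathrm{glb}(S) = \mathrm{glb}(S'')$ as elements of $\AT(P)$ whenever either exists. (iii) Show $S''$ is downward directed: given $s_1'', s_2'' \in S''$, by step (i)–(ii) machinery pick $s_1, s_2 \in S$ with $s_i \leq_\AT s_i''$, use downward directedness of $S$ to get $s \in S$ below both, then if $s \in P$ we are close but may need to drop one level — if $s = x_{[T]}$, choose any $t \in T$ (nonempty since $T$ is downward directed) and note $t \in S''$, $t \leq_\AT x_{[T]} = s \leq_\AT s_i''$, giving a common lower bound $t \in S'' \cap P$; if $s \in S \cap P \subseteq S''$ it already works. (iv) Show $S''$ has no least element: if $m$ were least in $S''$ then $m = \mathrm{glb}(S'') = \mathrm{glb}(S)$, and I claim this forces $m$ to be least in $S$ or to witness a least element of some $T$, contradicting the hypotheses; alternatively, invoke the filter trick of Lemma~\ref{largedd} — pass from $S''$ to the filter it generates in $P$, which preserves the glb and, by Lemma~\ref{largedd}(2), has no least element provided $S''$ doesn't, and for the latter one argues that a least element of $S''$ would pull back through the $\leq_\AT s''$ relations to contradict that $S$ has no least element. (v) Finally confirm $\mathrm{glb}(S'') \in P$: this is the point where I may need to apply Lemma~\ref{largedd}(1) after replacing $S''$ by the filter it generates, or simply observe that $\mathrm{glb}(S) = \mathrm{glb}(S'')$ and treat the conclusion ``$\mathrm{glb}(S)=\mathrm{glb}(S'')$'' as an identity in $\AT(P)$ that does not assert membership in $P$ — rereading the statement, it only asserts equality, so step (v) may be unnecessary.

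The main obstacle I expect is step (iv), ensuring $S''$ has no least element, together with the bookkeeping in step (iii) when the common lower bound found in $S$ happens to be one of the adjoined points $x_{[T]}$; the fix in both cases is the same device — descend one more level from $x_{[T]}$ into $T$, which is legitimate because $T$ is itself downward directed with no least element, so it has elements strictly below any prescribed one (this is exactly the content of Lemma~\ref{largedd}(2) applied inside $P$, or can be seen directly). A secondary subtlety is that a priori $\mathrm{glb}(S)$ need not exist in $\AT(P)$ at all; but the statement is conditional in spirit — it produces $S''$ with $\mathrm{glb}(S) = \mathrm{glb}(S'')$, and since I will have shown $S$ and $S''$ have identical lower-bound sets in $\AT(P)$, the two glbs exist simultaneously and coincide, so nothing is lost. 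Once these points are handled, combining (i)–(iv) with the trivial last clause noted above completes the proof.
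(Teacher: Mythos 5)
Your construction of $S''$ (keep $S\cap P$, and replace each $x_{[T]}\in S\setminus P$ by the set $T$) is essentially the same device as the paper's, which instead forms the filter $S'$ generated by $S$ in $\AT(P)$ and sets $S''=S'\cap P$; your lower-bound comparison in step (ii) and your treatment of the final clause are fine. However, step (iii) contains a genuine error: for $t\in T$ one has $x_{[T]}\leq_\AT t$ (the point $x_{[T]}$ is adjoined as a greatest \emph{lower} bound of $T$, and indeed you use $x_{[T]}\leq_\AT s''$ correctly elsewhere), not $t\leq_\AT x_{[T]}$ as you write there. Consequently an arbitrary $t\in T$ lies \emph{above} $x_{[T]}$ and need not lie below $s_1''$ or $s_2''$, so it is not the common lower bound you need, and as written the downward directedness of $S''$ is not established. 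The repair is exactly what the paper does at this point: from $x_{[T]}\leq_\AT s_i''$ with $s_i''\in P$, Definition~\ref{leqAdef}(3) produces $t_i\in T$ with $t_i\leq s_i''$, and then downward directedness of $T$ gives $u\in T\subseteq S''$ with $u\leq s_1''$ and $u\leq s_2''$.

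The same inversion infects your sketch of step (iv): ``descend one more level from $x_{[T]}$ into $T$'' is in fact an ascent. The correct argument there is that a least element $m$ of $S''$ would satisfy $m=\mathrm{glb}(S'')=\mathrm{glb}(S)$; if $m\in S\cap P$ then $m$ is a least element of $S$ (using Definition~\ref{leqAdef}(2) to get $m\leq_\AT x_{[T]}$ for each $x_{[T]}\in S$, since $T\subseteq S''$), contradicting the hypothesis, while if $m\in T$ for some $x_{[T]}\in S\setminus P$ then $m$ is a least element of $T$, contradicting the fact that only sets $T$ without least element index points of $\AT(P)\setminus P$. Two smaller points: you should fix one representative $T$ of each class $[T]$ when forming the union, and to know that $\mathrm{glb}(S)$ actually exists (rather than arguing conditionally) you should note, as the paper does at the outset, that $\AT(P)$ satisfies GLB by Theorem~\ref{EPspec} and Proposition~\ref{primeintersect}.
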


\begin{proof}
We observe that $\mathrm{glb}(S) \in \AT(P)$, since $(\AT(P), \leq_\AT)$ satisfies GLB, by Theorem~\ref{EPspec} and Proposition~\ref{primeintersect}.

We begin by setting $$S' = S \cup \{t \in \AT(P) \mid \exists s \in S \text{ such that } s \leq_\AT t\}.$$ Then, by Lemma~\ref{largedd}, $S'$ is downward directed, $\mathrm{glb}(S) = \mathrm{glb}(S')$, and $S'$ has no least element. Now let $S'' = S' \cap P$. To see that $S''$ is also downward directed, let $s,t \in S''$. Then there exists $r \in S'$ such that $r \leq_\AT s$ and $r \leq_\AT t$, since $S'$ is downward directed. If $r \in P$, then $r \in S''$, as required. We  therefore assume that $r \in \AT(P) \setminus P$, and hence that $r = x_{[T]}$ for some downward directed subset $T$ of $P$ with no least element. Since $x_{[T]} \leq_\AT s$ and $x_{[T]} \leq_\AT t$, there exist $p,q \in T$ such that $p \leq s$ and $q \leq t$, by Definition~\ref{leqAdef}(3). Since $T$ is downward directed, there exists $u \in T \subseteq P$ such that $u \leq p \leq s$ and $u \leq q \leq t$. By the definition of $S'$, since $r \in S'$, there exists $v \in S$ such that $v \leq_\AT r$. Then $u \in S'$, since $v \leq_\AT r = x_{[T]}\leq_\AT u$, and hence $u \in S''$, showing that $S''$ is downward directed. 

Note that $\mathrm{glb}(S') \leq_\AT \mathrm{glb}(S'')$, since $S'' \subseteq S'$. Also, for all $x_{[T]} \in S' \setminus S''$ we have $\mathrm{glb}(S'') \leq_\AT x_{[T]}$, since $x_{[T]} = \mathrm{glb}(T)$ and $T\subseteq S''$. Hence $\mathrm{glb}(S'') \leq_\AT \mathrm{glb}(S')$, and therefore $\mathrm{glb}(S) = \mathrm{glb}(S') = \mathrm{glb}(S'')$. Also, $S''$ has no least element since if there were such an element $s \in S''$, then we would have $s = \mathrm{glb}(S'') = \mathrm{glb}(S')$, implying that $S'$ has a least element. Finally, for all $s'' \in S''$ there exists $s \in S$ satisfying $s \leq_\AT s''$, since $S''\subseteq S'$.
\end{proof} 

\begin{proposition} \label{R(A(P))equalsP}
Let $P$ be any partially ordered set. Then $\RT(\AT(P)) = P$.
\end{proposition}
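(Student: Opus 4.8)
The plan is to prove the two set-theoretic inclusions $\RT(\AT(P)) \subseteq P$ and $P \subseteq \RT(\AT(P))$ separately, treating the adjoined points $x_{[S]}$ and the original points $p \in P$ as the two things we must track. Throughout I will freely use that $(\AT(P), \leq_\AT)$ is a poset satisfying GLB, which we have from Theorem~\ref{EPspec} together with Proposition~\ref{primeintersect} (the prime spectrum of any ring has GLB), and I will use Lemma~\ref{glb-lemma}, which lets me replace any downward directed subset of $\AT(P)$ with no least element by a downward directed subset of $P$ with no least element having the same glb. Recall $\AT(P) = P \cup \{x_{[S]}\}$, where $S$ ranges over downward directed subsets of $P$ with no least element; and $\RT(\AT(P))$ is obtained from $\AT(P)$ by deleting precisely those elements that arise as $\mathrm{glb}$ (computed in $\AT(P)$) of a downward directed subset of $\AT(P)$ with no least element.

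For the inclusion $\RT(\AT(P)) \subseteq P$: I must show that no adjoined point $x_{[S]}$ survives in $\RT(\AT(P))$, i.e., every $x_{[S]} \in \AT(P) \setminus P$ is the glb in $\AT(P)$ of some downward directed subset of $\AT(P)$ with no least element. The natural candidate is $S$ itself, viewed as a subset of $\AT(P)$: it is downward directed with no least element (these properties are part of the definition of which $x_{[S]}$ get adjoined), and I claim $x_{[S]} = \mathrm{glb}(S)$ in $\AT(P)$. That $x_{[S]}$ is a lower bound for $S$ follows from Definition~\ref{leqAdef}(2) — for each $s \in S$ we have $s \leq s$ for all $s \in S$... wait, more carefully: $x_{[S]} \leq_\AT s$ for every $s \in S$ holds by clause (2) since $p \le s'$ for all $s' \in S$ fails in general; the correct reading is that clause (2) gives $x_{[S]} \le_\AT s$ precisely because every element of $S$ is $\geq$ some element of $S$ (namely itself), so $x_{[S]}$ is indeed $\le_\AT$ every element of $S$. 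For the "greatest" part, suppose $y \in \AT(P)$ is a lower bound of $S$; I split on whether $y \in P$ or $y = x_{[T]}$. If $y = p \in P$ is a lower bound of $S$, then $p \le s$ for all $s \in S$, so by Definition~\ref{leqAdef}(2) $p \le_\AT x_{[S]}$. If $y = x_{[T]}$ is a lower bound of $S$, then for each $s \in S$, $x_{[T]} \le_\AT s$ gives (by clause (3)) some $t \in T$ with $t \le s$; this says exactly $T \preceq S$, hence $x_{[T]} \le_\AT x_{[S]}$ by clause (4). So $x_{[S]} = \mathrm{glb}(S)$ in $\AT(P)$, and therefore $x_{[S]} \notin \RT(\AT(P))$, as needed.

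For the reverse inclusion $P \subseteq \RT(\AT(P))$: I must show no original point $p \in P$ gets deleted, i.e., $p$ is \emph{not} the glb in $\AT(P)$ of any downward directed $S \subseteq \AT(P)$ with no least element. Suppose toward a contradiction that $p = \mathrm{glb}(S)$ for such an $S$. By Lemma~\ref{glb-lemma} there is $S'' \subseteq P$, downward directed with no least element, with $\mathrm{glb}(S'') = \mathrm{glb}(S) = p$ (computed in $\AT(P)$), and with the property that every $s'' \in S''$ dominates some element of $S$. Now consider $x_{[S'']} \in \AT(P) \setminus P$ (it is adjoined precisely because $S''$ is downward directed with no least element): by the computation in the previous paragraph applied to $S''$, we have $x_{[S'']} = \mathrm{glb}(S'')$ in $\AT(P)$. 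But $p = \mathrm{glb}(S'')$ as well, and glb's are unique, forcing $p = x_{[S'']}$ — impossible since $p \in P$ and $x_{[S'']} \notin P$. Hence $p \in \RT(\AT(P))$.

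Combining the two inclusions gives $\RT(\AT(P)) = P$ as posets (the order on $\RT(\AT(P))$ is the restriction of $\leq_\AT$, which restricted to $P$ is $\leq$ by Definition~\ref{leqAdef}(1)). The main obstacle I anticipate is the bookkeeping in the "greatest" half of the claim $x_{[S]} = \mathrm{glb}(S)$ — specifically correctly handling a lower bound of the form $x_{[T]}$ and seeing that "$x_{[T]}$ is $\le_\AT$ every element of $S$" unwinds via clause (3) to "$T \preceq S$" and hence via clause (4) to "$x_{[T]} \le_\AT x_{[S]}$"; everything else is routine once Lemma~\ref{glb-lemma} is invoked to descend from $\AT(P)$ back into $P$.
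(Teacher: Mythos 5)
Your proof is correct and follows essentially the same route as the paper: both directions rest on the observation that $x_{[S]}=\mathrm{glb}(S)$ in $\AT(P)$ (which the paper asserts without the detailed verification you supply) together with Lemma~\ref{glb-lemma} to descend from a downward directed subset of $\AT(P)$ to one of $P$. The only blemish is a mislabeled citation: the claim that $x_{[S]}\leq_\AT s$ for each $s\in S$ follows from clause (3) of Definition~\ref{leqAdef}, not clause (2), though the reasoning you give (each $s\in S$ dominates some element of $S$, namely itself) is exactly the clause (3) argument.
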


\begin{proof}
Let $S \subseteq \AT(P)$ be a downward directed subset with no least element. Then, by Lemma~\ref{glb-lemma}, there is a downward directed subset $S''$ of $P$ with no least element, such that $\mathrm{glb}(S) = \mathrm{glb}(S'')$. Since $S'' \subseteq P$, we have $\mathrm{glb}(S'') = x_{[S'']}$, and hence $\mathrm{glb}(S'') = \mathrm{glb}(S) \notin P$. As $S \subseteq \AT(P)$ was arbitrary, this implies that $P \subseteq \RT(\AT(P))$. But the reverse inclusion $\RT(\AT(P)) \subseteq P$ holds as well, since every element of $\AT(P)\setminus P$ is the greatest lower bound of a downward directed subset with no least element, by Definition~\ref{leqAdef}. Thus $\RT(\AT(P)) = P$.
\end{proof}

\begin{lemma} \label{APcor}
Let $\, (P, \leq)$ be a partially ordered set. Then $\, (\AT(P), \leq_\AT)$ satisfies \emph{GLB}, \emph{DC}, and \emph{DD}. 
\end{lemma}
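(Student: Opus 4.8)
The plan is to prove each of the three properties GLB, DC, DD in turn for the poset $(\AT(P), \leq_\AT)$, leaning heavily on the two structural facts already available: that $(\AT(P), \leq_\AT) \cong (\Spec(L_K(E_P)), \subseteq)$ by Theorem~\ref{EPspec}, and that $\RT(\AT(P)) = P$ by Proposition~\ref{R(A(P))equalsP}. The second of these is what makes the conditions DC and DD tractable, since both are statements about the interaction of $\AT(P)$ with its subset $\RT(\AT(P))$, which we now know is exactly $P$.

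First, GLB. Since $\Spec(R)$ for any ring $R$ has property GLB by Proposition~\ref{primeintersect} (every downward directed collection of prime ideals has a greatest lower bound, namely its intersection), and $(\AT(P), \leq_\AT) \cong (\Spec(L_K(E_P)), \subseteq)$ by Theorem~\ref{EPspec}, property GLB transfers across the order-isomorphism. This is a one-line argument.

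Next, DC. Let $S \subseteq \AT(P)$ be downward directed and let $p \in \RT(\AT(P)) = P$ with $p \geq_\AT \mathrm{glb}(S)$; we must produce $s \in S$ with $p \geq_\AT s$. Apply Lemma~\ref{glb-lemma} to get a downward directed $S'' \subseteq P$ with no least element such that $\mathrm{glb}(S) = \mathrm{glb}(S'')$ and such that each $s'' \in S''$ dominates some element of $S$; it suffices to find $s'' \in S''$ with $p \geq_\AT s''$. Now $\mathrm{glb}(S'') = x_{[S'']} \in \AT(P)\setminus P$, and $p \geq_\AT x_{[S'']}$ with $p \in P$ is precisely Definition~\ref{leqAdef}(3), which says $s \leq p$ for some $s \in S''$. (If $S$ happens to have a least element, DC is immediate since $\mathrm{glb}(S) \in S$ and $p \geq \mathrm{glb}(S)$ gives the required witness; so the real content is the no-least-element case, handled by the above.) Here one should double-check the edge case where $S$ is downward directed with a least element but that least element lies in $\AT(P)\setminus P$ — then $\mathrm{glb}(S) = x_{[T]}$ and again $p \geq_\AT x_{[T]}$ forces $p \geq t$ for some $t \in T \subseteq S'$ of the generated filter, reducing to the same Definition~\ref{leqAdef}(3) mechanism.

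Finally, DD. Let $S \subseteq \AT(P)$ be downward directed with $\mathrm{glb}(S) \in \AT(P)$; we need a downward directed $T \subseteq \RT(\AT(P)) = P$ with $\mathrm{glb}(T) = \mathrm{glb}(S)$. But this is exactly what Lemma~\ref{glb-lemma} delivers: it produces $S'' \subseteq P$ downward directed with no least element and $\mathrm{glb}(S'') = \mathrm{glb}(S)$, and $S'' \subseteq P = \RT(\AT(P))$. So $T = S''$ works. I expect DD and the no-least-element core of DC to be essentially immediate consequences of Lemma~\ref{glb-lemma} once the identification $\RT(\AT(P)) = P$ is invoked; the only place requiring genuine care is making sure the various small cases in DC (when $S$ does have a least element, in $P$ or outside $P$) are all covered, but these are routine unwindings of Definition~\ref{leqAdef}. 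There is no serious obstacle here — the substantive work was already done in Lemma~\ref{glb-lemma} and Proposition~\ref{R(A(P))equalsP}, and this lemma is the harvest.
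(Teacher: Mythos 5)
Your proof follows essentially the same route as the paper's: GLB is transferred from $\Spec(L_K(E_P))$ via Theorem~\ref{EPspec} and Proposition~\ref{primeintersect}, and DC and DD are deduced from Lemma~\ref{glb-lemma} together with the identification $\RT(\AT(P)) = P$ of Proposition~\ref{R(A(P))equalsP}; your treatment of the least-element cases in DC (where the least element of $S$ is itself the required witness) is fine, if slightly more elaborate than necessary. The one genuine omission is in DD: Lemma~\ref{glb-lemma} applies only to downward directed $S$ with \emph{no least element}, so your claim that DD ``is exactly what Lemma~\ref{glb-lemma} delivers'' does not cover the case where $S$ has a least element $s$. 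That case needs a separate (trivial) word, which the paper supplies: if $s \in P$ take $T = \{s\}$, which lies in $\RT(\AT(P))$ by Proposition~\ref{R(A(P))equalsP}; and if $s = x_{[T]} \in \AT(P)\setminus P$ take $T$ itself, which is a downward directed subset of $P = \RT(\AT(P))$ with $\mathrm{glb}(T) = x_{[T]} = \mathrm{glb}(S)$. With that sentence added, your argument coincides with the paper's.
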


\begin{proof}
For any ring $R$, $(\Spec(R), \subseteq)$ satisfies GLB, by Proposition~\ref{primeintersect}. Hence, $(\AT(P), \leq_\AT)$ satisfies GLB, by Theorem~\ref{EPspec}.

To show that $(\AT(P), \leq_\AT)$ satisfies DD, let $S \subseteq \AT(P)$ be downward directed. We wish to find a downward directed subset $T$ of $\RT(\AT(P))$ such that $\mathrm{glb}(S) = \mathrm{glb}(T)$. By Lemma~\ref{glb-lemma} and Proposition~\ref{R(A(P))equalsP}, we may assume that $S$ has a least element $s$. If $s \in P$, then we may take $T = \{s\}$, which is in $\RT(\AT(P))$, by Proposition~\ref{R(A(P))equalsP}. Otherwise $s = x_{[T]}$ for some downward directed subset $T$ of $P = \RT(\AT(P))$ with no least element, by Definition~\ref{leqAdef}, and hence $\mathrm{glb}(S) = x_{[T]} = \mathrm{glb}(T)$.

Finally, to show that $(\AT(P), \leq_\AT)$ satisfies DC, let $S \subseteq \AT(P)$ be downward directed, and let $p \in \RT(\AT(P)) = P$ be such that $\mathrm{glb}(S) \leq_\AT p$. We wish to find $s \in S$ such that $s \leq_\AT p$. If $S$ has a least element $s$, then $s = \mathrm{glb}(S)$, from which the desired conclusion follows. Therefore,  assume that $S$ has no least element. Then $\mathrm{glb}(S) \in \AT(P) \setminus P$, since $ \RT(\AT(P)) = P$ (again by Proposition~\ref{R(A(P))equalsP}),  and hence $\mathrm{glb}(S) = x_{[S'']}$ for some downward directed subset $S'' \subseteq P$ with no least element. Moreover, by Lemma~\ref{glb-lemma}, we may assume that for every $s'' \in S''$ there exists $s \in S$ such that $s \leq_\AT s''$. Thus $s'' \leq p$ for some $s'' \in S''$, by Definition~\ref{leqAdef}(3), and hence $s \leq_\AT p$ for some $s \in S$.
\end{proof}

We now present a description of those partially ordered sets which arise as $\AT(Q)$ for some partially ordered set $Q$.  The description will be given from three points of view: in terms of the poset operations $\AT$ and $\RT$,  in terms of germane poset properties, and in terms of prime spectra of Leavitt path algebras.   

\begin{theorem} \label{APprop}
The following are equivalent for any partially ordered set $\, (P, \leq)$.
\begin{enumerate}
\item[$(1)$] $(P, \leq) \cong (\AT(\RT(P)), \leq_\AT)$.
\item[$(2)$] $(P, \leq) \cong (\AT(P'), \leq'_\AT)$ for some partially ordered set $\, (P', \leq')$.
\item[$(3)$] $(P, \leq) \cong (\Spec(L_K(E_{P'})), \subseteq)$ for any field $K$ and some partially ordered set $\, (P', \leq')$.
\item[$(4)$] $(\RT(\AT(P)), \leq) \cong (\AT(\RT(P)), \leq_\AT)$.
\item[$(5)$] $(P, \leq)$ satisfies \emph{GLB}, \emph{DC}, and \emph {DD}.
\end{enumerate}
\end{theorem}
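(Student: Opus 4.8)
The plan is to prove the cycle of implications $(3) \Rightarrow (2) \Rightarrow (5) \Rightarrow (1) \Rightarrow (4) \Rightarrow (3)$, relegating the bulk of the work to the arrows $(2)\Rightarrow(5)$ and $(5)\Rightarrow(1)$; the remaining implications are either immediate or have already been assembled from the preceding results.

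\textbf{The easy arrows.} The implication $(3) \Rightarrow (2)$ is Theorem~\ref{EPspec}, which tells us that $(\Spec(L_K(E_{P'})), \subseteq) \cong (\AT(P'), \leq'_\AT)$, so if $P$ is isomorphic to the former it is isomorphic to the latter. The implication $(2) \Rightarrow (5)$ is Lemma~\ref{APcor}, which says $(\AT(P'), \leq'_\AT)$ satisfies GLB, DC, and DD; these three properties are visibly invariant under order-isomorphism (each is phrased purely in terms of downward directed subsets, greatest lower bounds, and the subposet $\RT$, all of which are preserved by order-isomorphisms), so $P$ inherits them. The implication $(1) \Rightarrow (4)$ is also quick: if $P \cong \AT(\RT(P))$, then applying $\RT\AT$ and using $\RT(\AT(Q)) = Q$ from Proposition~\ref{R(A(P))equalsP} with $Q = \RT(P)$ gives $\RT(\AT(P)) \cong \RT(\AT(\AT(\RT(P)))) = \AT(\RT(P))$ --- one must just check $\RT$ respects order-isomorphisms, which it does. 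Finally $(4) \Rightarrow (3)$: taking $P' = \RT(\AT(P))$, Theorem~\ref{EPspec} gives $(\Spec(L_K(E_{P'})), \subseteq) \cong (\AT(P'), \leq_\AT) = (\AT(\RT(\AT(P))), \leq_\AT)$, and by Proposition~\ref{R(A(P))equalsP} again $\RT(\AT(P)) $ plays the role... actually cleaner: take $P' = \RT(P)$, so $\Spec(L_K(E_{P'})) \cong \AT(\RT(P))$, which by hypothesis $(4)$ (combined with $(1)\Leftrightarrow(4)$, or used directly as $\AT(\RT(P)) \cong \RT(\AT(P)) \cong$ \dots) we want to match with $P$; the bookkeeping here needs care, and the intended route is surely $(4) \Rightarrow (1) \Rightarrow (3)$ or to show $(4)$ and $(1)$ are equivalent first.

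\textbf{The main implication, $(5) \Rightarrow (1)$.} This is where the real work lies. Assume $(P,\leq)$ satisfies GLB, DC, and DD, and we must build an order-isomorphism $\Psi : \AT(\RT(P)) \to P$. The natural candidate sends an element $p \in \RT(P) \subseteq \AT(\RT(P))$ to $p \in P$, and sends an adjoined element $x_{[S]}$ (for $S \subseteq \RT(P)$ downward directed with no least element) to $\mathrm{glb}_P(S)$ --- which exists in $P$ by GLB. The steps are: (i) check $\Psi$ is well-defined, i.e., $S \approx S'$ in $\RT(P)$ forces $\mathrm{glb}_P(S) = \mathrm{glb}_P(S')$ (immediate, since $\approx$-equivalent sets have the same lower bounds); (ii) check $\Psi$ is surjective --- every $p \in P$ is either in $\RT(P)$, handled trivially, or is $\mathrm{glb}(S)$ for some downward directed $S$ without least element, and then by DD we may take $S \subseteq \RT(P)$, so $p = \Psi(x_{[S]})$; (iii) check $\Psi$ is injective and order-reflecting, which is the delicate part; (iv) check $\Psi$ is order-preserving, which requires unwinding the four cases of Definition~\ref{leqAdef} against the order on $P$ and will use DC for the case $x_{[S]} \leq_\AT q$ with $q \in \RT(P)$ (there $\mathrm{glb}(S) \leq q$, and DC delivers $s \leq q$ for some $s \in S$, which is exactly clause (3) of the definition). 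The main obstacle, and the place DC and GLB interact most subtly, is showing that distinct elements of $\AT(\RT(P))$ go to distinct elements of $P$ in an order-reflecting way --- in particular that $\mathrm{glb}_P(S) = \mathrm{glb}_P(T)$ implies $S \approx T$ (so two adjoined points collapse only when they should), and that $\mathrm{glb}_P(S) = q$ for $q \in \RT(P)$ is impossible (so an adjoined point never lands on an original point), which is precisely the content of $q \notin \RT(P)$ forced by the definition of $\RT$ --- wait, rather one must rule out $\mathrm{glb}(S) \in \RT(P)$, and this follows because if $q = \mathrm{glb}(S)$ with $S$ downward directed without least element then $q \notin \RT(P)$ by definition, contradiction. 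For order-reflection between two adjoined points, $\mathrm{glb}(S) \leq \mathrm{glb}(T)$ in $P$ must yield $S \preceq T$; this is where DC is essential --- apply it to each $t \in T \subseteq \RT(P)$, noting $t \geq \mathrm{glb}(T) \geq \mathrm{glb}(S)$... but $t$ need not exceed $\mathrm{glb}(S)$; instead one applies DC with the directed set $S$ and the element $t$, using $t \geq \mathrm{glb}(T) \geq \mathrm{glb}(S) = \mathrm{glb}_P(S)$ to get $t \geq s$ for some $s \in S$, giving $S \preceq T$. Assembling these four checks completes $(5) \Rightarrow (1)$ and closes the cycle.
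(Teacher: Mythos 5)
Your proposal is essentially the paper's proof: the easy arrows are dispatched by the same citations (Theorem~\ref{EPspec}, Lemma~\ref{APcor}, Proposition~\ref{R(A(P))equalsP}), and your main implication $(5)\Rightarrow(1)$ is exactly the paper's argument with the isomorphism written in the opposite direction (the paper defines $\psi:P\to\AT(\RT(P))$ by $p\mapsto p$ on $\RT(P)$ and $\mathrm{glb}(S)\mapsto x_{[S]}$, using DD for well-definedness, DC for injectivity and for the two order-comparison cases involving adjoined points, and GLB for surjectivity --- the same three uses of the three hypotheses you identify). The one step you leave hanging, $(4)\Rightarrow(3)$, is resolved exactly as you suspect: Proposition~\ref{R(A(P))equalsP} gives $\RT(\AT(P))=P$ \emph{on the nose} (an equality of subposets, not merely an isomorphism), so statement $(4)$ is verbatim the statement $(P,\leq)\cong(\AT(\RT(P)),\leq_\AT)$, i.e., statement $(1)$; the paper simply records ``$(1)$ and $(4)$ are equivalent by Proposition~\ref{R(A(P))equalsP}'' and then closes the loop with $(1)\Rightarrow(2)\Rightarrow(3)$ via $P'=\RT(P)$ and Theorem~\ref{EPspec}. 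With that one-line observation inserted, your cycle is complete.
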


\begin{proof}
(1) implies (2) tautologically, (2) and (3) are equivalent by Theorem~\ref{EPspec}, (1) and (4) are equivalent by Proposition~\ref{R(A(P))equalsP}, and (2) implies (5) by Lemma~\ref{APcor}. So it suffices to assume (5), and show that (1) holds.

We define the map $\psi : P \to \AT(\RT(P))$ by setting
$$\psi (p)=\left\{
\begin{array}{ll}
p & \mbox{if } p \in \RT(P)\\
x_{[S]} & \mbox{if } p =  \mathrm{glb}(S) \text{ for } S \subseteq \RT(P) \text{ downward directed without least element}  
\end{array}\right..$$
To show that $\psi$  is well-defined, note that if $p \in P \setminus \RT(P)$, then $p = \mathrm{glb}(S)$ for some downward directed $S \subseteq P$ with no least element. Since $(P, \leq)$ satisfies DD, $S$ may be chosen such that $S \subseteq \RT(P)$. Also since $(P, \leq)$ satisfies DC, it is easy to see that if $T$ is another downward directed subset of $\RT(P)$ such that $p = \mathrm{glb}(T)$, then $[T] = [S]$. It follows, by Definition~\ref{leqAdef}, that $\psi$ is well-defined and injective. From the definition of $\AT(\RT(P))$ and the fact that $(P, \leq)$ satisfies GLB it is also evident that $\psi$ is surjective. Thus it remains to show that $\psi$ is an order-embedding.  As in the proof of Claim 4 of Theorem \ref{EPspec}, there are four cases to check.  

Let $p,q \in P$. First, if  $p,q \in \RT(P)$, then $p = \psi(p)$ and $q = \psi(q)$, and hence $p \leq q$ if and only if $\psi(p) \leq_\AT \psi(q)$. 
\noindent
Second, if $p \in \RT(P)$ and $q \in P \setminus \RT(P)$, then, as above, $q = \mathrm{glb}(S)$ for some downward directed $S \subseteq \RT(P)$ with no least element. Thus 
\begin{eqnarray*}
p \leq q & \Leftrightarrow & p \leq s \text{ for all } s \in S\\
& \Leftrightarrow & p \leq_\AT x_{[S]}\text{ (by Definition~\ref{leqAdef})}\\
& \Leftrightarrow & \psi(p) \leq_\AT  \psi(q).
\end{eqnarray*}
\noindent
Third, suppose that $p \in P \setminus \RT(P)$ and $q \in \RT(P)$. Again, $p = \mathrm{glb}(S)$ for some downward directed $S \subseteq \RT(P)$ with no least element. Then 
\begin{eqnarray*}
p \leq q & \Leftrightarrow & s \leq q \text{ for some } s \in S  \text{ (since } (P, \leq) \text{ satisfies DC)}\\
& \Leftrightarrow & x_{[S]} \leq_\AT q  \text{ (by Definition~\ref{leqAdef})}\\
& \Leftrightarrow & \psi(p) \leq_\AT \psi(q).
\end{eqnarray*}
\noindent
Fourth and finally, suppose that $p, q \in P \setminus \RT(P)$, and write $p = \mathrm{glb}(S)$,  $q = \mathrm{glb}(T)$ for some downward directed $S, T \subseteq \RT(P)$ with no least element. Then 
\begin{eqnarray*}
p \leq q & \Leftrightarrow & \text{for all } t \in T \text{ there exists } s \in S \text{ such that }
s \leq t \text{ (since } (P, \leq) \text{ satisfies DC)}\\
& \Leftrightarrow & x_{[S]} \leq_\AT x_{[T]}  \text{ (by Definition~\ref{leqAdef})}\\
& \Leftrightarrow & \psi(p) \leq_\AT \psi(q).
\end{eqnarray*}
\noindent
Hence $\psi : P \to \AT(\RT(P))$ is an order-embedding, as desired.
\end{proof}

In particular, we may recast the implication (5) $\Rightarrow$ (3) of  Theorem  \ref{APprop} as a partial answer to the aforementioned Realization Question.   

\begin{corollary}\label{AnstoRealizationQ}
Let $\, (P,\leq)$ be a partially ordered set which satisfies conditions \emph{GLB}, \emph{DC}, and \emph{DD}, and let $K$ be any field.  Then there exists a $K$-algebra $A$ for which $\, (\Spec(A), \subseteq) \cong (P,\leq).$
\end{corollary}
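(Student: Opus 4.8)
The plan is to obtain this corollary as an immediate harvest of Theorem~\ref{APprop}, specifically the implication (5) $\Rightarrow$ (3). All of the genuine content has already been established upstream: the construction of the graph $E_{P'}$ from a poset (Definition~\ref{mainconstr}), the computation of $\Spec(L_K(E_{P'}))$ via the specialization of Rangaswamy's classification in Lemma~\ref{chrislemma}(5) and Corollary~\ref{primecor}, the identification of that spectrum with $\AT(P')$ (Theorem~\ref{EPspec}), and the matching of the poset-theoretic conditions GLB, DC, DD with being of the form $\AT(Q)$ (Theorem~\ref{APprop}). So the proof is purely an assembly step.

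Concretely, first I would take the given poset $(P,\leq)$ satisfying GLB, DC, and DD; this is precisely condition (5) of Theorem~\ref{APprop}. Invoking the equivalence of (5) with (3) there, for the prescribed field $K$ there is a partially ordered set $(P',\leq')$ with $(P,\leq) \cong (\Spec(L_K(E_{P'})), \subseteq)$. Tracing the proof makes the choice explicit: (5) $\Rightarrow$ (1) yields $(P,\leq) \cong (\AT(\RT(P)), \leq_\AT)$, and then Theorem~\ref{EPspec} gives $(\AT(\RT(P)), \leq_\AT) \cong (\Spec(L_K(E_{\RT(P)})), \subseteq)$, so one may take $P' = \RT(P)$.

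Second, I would set $A = L_K(E_{P'})$, the Leavitt path algebra with coefficients in $K$ of the graph $E_{P'}$. By construction $A$ is an associative $K$-algebra, which is all that is claimed. Composing with the isomorphism from the first step produces $(\Spec(A), \subseteq) \cong (P,\leq)$, completing the argument.

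There is essentially no obstacle left to surmount here: the difficulty of the Realization Question for this class of posets is entirely absorbed into Theorems~\ref{EPspec} and \ref{APprop}, and into Proposition~\ref{primeintersect}. The only point worth flagging explicitly is that $A$ need not be unital — it is unital exactly when $E_{P'}^0 = \{v_p \mid p \in P'\}$ is finite, equivalently when $P$ is finite — so the statement genuinely belongs to the setting of not-necessarily-unital rings, consistent with the generality in which Proposition~\ref{primeintersect} and Rangaswamy's theorem were stated.
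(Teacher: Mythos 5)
Your proposal is correct and matches the paper exactly: the corollary is stated there as an immediate recasting of the implication (5) $\Rightarrow$ (3) of Theorem~\ref{APprop}, with $A = L_K(E_{P'})$ for a suitable poset $P'$ (which, as you note, may be taken to be $\RT(P)$). The extra remarks on tracing the isomorphisms and on non-unitality are accurate but not needed.
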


Corollary~\ref{AnstoRealizationQ} is fairly far-reaching, but not complete.  First, we  know by Proposition~\ref{primeintersect} that condition GLB is a necessary condition in any well-posed Realization Question. Second, it is an open question as to whether condition DD is necessary in general. (Further discussion is presented in Section~\ref{closingremarks}.) Third, a reconsideration of  the graph $E$ of Example~\ref{ExampleSpecnotDC} above  yields  a  Leavitt path algebra $L_K(E)$ for which $\Spec(L_K(E))$  does not satisfy property DC.  Specifically,  we see that  the descending chains $S_M = \{\langle E^0 \setminus M_i \rangle \mid i\in \Z^+\}$ and $S_N = \{\langle E^0 \setminus N_i \rangle \mid i\in \Z^+\}$ in $\Spec(L_K(E))$ have $\mathrm{glb}(S_M) = 0 = \mathrm{glb}(S_N)$, and that each element of $S_M \cup S_N$ is in $\RT(\Spec(L_K(E))$, but no element of $S_M$ is comparable to any element of $S_N$.   

\medskip

We conclude this section with a few observations on how to recognize those elements of a partially ordered set $(P,\leq)$ which belong  to $\RT(P)$, since these feature prominently in the results above. Our focus is on the case where $(P,\leq)$ is the prime spectrum of a Leavitt path algebra.

\begin{lemma} \label{ddglb}
The following are equivalent for any partially ordered set $\, (P, \leq)$ and $p \in P$.
\begin{enumerate}
\item[$(1)$] $p \notin \RT(P)$.
\item[$(2)$] There exists a downward directed $S \subseteq P$ such that $p = \mathrm{glb}(S)$ but $p \notin S$.
\end{enumerate}
\end{lemma}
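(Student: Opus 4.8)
The plan is to prove the two implications separately, with the direction (1) $\Rightarrow$ (2) being the one requiring a small idea and (2) $\Rightarrow$ (1) being essentially immediate. For (2) $\Rightarrow$ (1): suppose there is a downward directed $S \subseteq P$ with $p = \mathrm{glb}(S)$ but $p \notin S$. Then $S$ cannot have a least element, because a least element of a set is necessarily its greatest lower bound and lies in the set, whereas here the greatest lower bound $p$ is not in $S$. (This uses the observation recorded right after Definition~\ref{conddef}-type discussion, namely that if a greatest lower bound of $S$ lies in $S$ then it is the least element of $S$.) So $S$ witnesses that $p = \mathrm{glb}(S)$ for a downward directed $S$ with no least element, which by the definition of $\RT(P)$ means $p \notin \RT(P)$.

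For (1) $\Rightarrow$ (2): suppose $p \notin \RT(P)$. By definition of $\RT(P)$, there exists a downward directed subset $S_0 \subseteq P$ with no least element such that $p = \mathrm{glb}(S_0)$. The only thing that needs to be arranged is that the witnessing set can be taken to exclude $p$ itself; a priori $p$ could be an element of $S_0$. But if $p \in S_0$, then since $p$ is a lower bound for $S_0$ it would be the least element of $S_0$, contradicting that $S_0$ has no least element. Hence automatically $p \notin S_0$, and we may take $S = S_0$. Thus statement (2) holds with the very same set.

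I expect there to be essentially no obstacle here — the lemma is a straightforward unpacking of Definition~\ref{conddef}. The only point requiring a moment's care is the repeated use of the elementary fact that a greatest lower bound of $S$ which happens to lie in $S$ is forced to be the least element of $S$ (equivalently, a set with no least element cannot contain its own greatest lower bound). That single observation drives both directions: in (2) $\Rightarrow$ (1) it upgrades "$p \notin S$" to "$S$ has no least element," and in (1) $\Rightarrow$ (2) it shows the given witness already satisfies "$p \notin S$." No case analysis or invocation of the heavier machinery (Theorem~\ref{EPspec}, Lemma~\ref{glb-lemma}, etc.) is needed.
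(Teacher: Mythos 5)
Your proof is correct and follows essentially the same route as the paper's: both directions hinge on the elementary fact that a greatest lower bound of $S$ lying in $S$ is the least element of $S$, used exactly as the paper uses it. No issues.
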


\begin{proof}
(1) $\Rightarrow$ (2) If $p \notin \RT(P)$, then there exists a downward directed $S \subseteq P$ with no least element such that $p = \mathrm{glb}(S)$. Then $p \notin S$, since otherwise $S$ would have a least element.

(2) $\Rightarrow$ (1) Let $S \subseteq P$ be as in (2). If $S$ were to have a least element $p'$, then $p' \leq \mathrm{glb}(S) = p$, and hence $p' = p$. But this would contradict $p \notin S$, and hence $S$ cannot have a least element.
\end{proof}

\begin{lemma} \label{ddglb2}
Let $R$ be a ring and $I \in\Spec(R)\setminus \RT(\Spec(R))$. Then $$I = \bigcap \{J \in \Spec(R) \mid I \subsetneq J\}.$$
\end{lemma}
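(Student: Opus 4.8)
\textbf{Proof proposal for Lemma~\ref{ddglb2}.}

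The plan is to unwind the definitions and use Lemma~\ref{ddglb} together with Proposition~\ref{primeintersect}. Since $I \in \Spec(R) \setminus \RT(\Spec(R))$, Lemma~\ref{ddglb} gives a downward directed subset $S \subseteq \Spec(R)$ with $I = \mathrm{glb}(S) = \bigcap_{J \in S} J$ (the greatest lower bound in $\Spec(R)$ of a downward directed family of primes is their intersection, by Proposition~\ref{primeintersect}), yet $I \notin S$. In particular every $J \in S$ properly contains $I$, so $S \subseteq \{J \in \Spec(R) \mid I \subsetneq J\}$. I will abbreviate $T = \{J \in \Spec(R) \mid I \subsetneq J\}$ and set $I' = \bigcap_{J \in T} J$.

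First I would observe that $I \subseteq I'$: every $J \in T$ satisfies $I \subsetneq J$, so $I$ is contained in the intersection of all of them. For the reverse inclusion, note that $I' = \bigcap_{J \in T} J \subseteq \bigcap_{J \in S} J = I$, because $S \subseteq T$ means the intersection over the larger index set $T$ is contained in the intersection over $S$. Combining the two inclusions gives $I = I' = \bigcap\{J \in \Spec(R) \mid I \subsetneq J\}$, which is exactly the claim. (One should also check $T \neq \emptyset$ so that the intersection is over a nonempty family; this is immediate since $S \subseteq T$ and $S$ is nonempty, being downward directed.)

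I do not expect any real obstacle here: the entire content is packaged in Lemma~\ref{ddglb} (to produce the downward directed witnessing family $S$) and in the elementary monotonicity of intersections with respect to the index set. The only point requiring a word of care is making explicit that $\mathrm{glb}(S)$ in $\Spec(R)$ coincides with the set-theoretic intersection $\bigcap_{J\in S} J$ — this is guaranteed because $S$ is downward directed, so Proposition~\ref{primeintersect} applies and $\bigcap_{J \in S} J$ is itself prime, hence is the greatest lower bound of $S$ in $\Spec(R)$.
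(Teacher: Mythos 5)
Your proof is correct and follows exactly the paper's argument: invoke Lemma~\ref{ddglb} to produce the downward directed witness $S$ with $I=\mathrm{glb}(S)=\bigcap_{J\in S}J$ and $I\notin S$, note $S\subseteq T$, and conclude by monotonicity of intersections together with the trivial inclusion $I\subseteq\bigcap_{J\in T}J$. No gaps; the extra remarks about nonemptiness of $T$ and the identification of the greatest lower bound with the intersection via Proposition~\ref{primeintersect} are sound and match the paper.
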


\begin{proof}
Let $T = \{J \in \Spec(R) \mid I \subsetneq J\}$. By Lemma~\ref{ddglb}, there is a downward directed $S \subseteq \Spec(R)$ such that $I = \mathrm{glb}(S)$ but $I \notin S$. Moreover, $\mathrm{glb}(S) = \bigcap_{J \in S} J$, by Proposition~\ref{primeintersect}. Then $S \subseteq T$, and hence $\bigcap_{J \in T} J \subseteq \bigcap_{J \in S} J = I$. But $I \subseteq \bigcap_{J \in T} J$, and therefore $I = \bigcap_{J \in T} J$, as claimed.
\end{proof}

A prime ideal $I$ that is the intersection of all the prime ideals strictly containing it (as in the previous lemma) is called \emph{not locally closed}. A classification of the locally closed prime ideals in $L_K(E)$, for $E$ finite, is given in~\cite{ABR}.
  
We note that the converse of Lemma \ref{ddglb2} is not true.  For instance, if $R = \Z$, then the prime ideal $\{0\}$ is the intersection of all the nonzero prime ideals of $\Z$, but $\{0\} \in \RT(\Spec(\Z))$, since there are clearly no downward directed sets in $\Spec(\Z)$ which lack a least element.   

\begin{proposition} \label{maxidmpt}
Let $R$ be a ring, $I \subseteq R$ an ideal, and $e \in R\setminus I$ an idempotent. Then there exists an ideal $M_e$ of $R$ maximal with respect to not containing $e$, such that $I \subseteq M_e$. Moreover, $M_e \in \RT(\Spec(R))$.
\end{proposition}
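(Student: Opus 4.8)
The plan is to produce $M_e$ by a standard Zorn's Lemma argument, and then to show that such an ideal lies in $\RT(\Spec(R))$ by verifying that it is prime and that it cannot be the greatest lower bound of a downward directed family of strictly larger ideals. First I would consider the collection $\mathcal{F}$ of all ideals $J$ of $R$ with $I \subseteq J$ and $e \notin J$. This is nonempty since $I \in \mathcal{F}$ (as $e \notin I$ by hypothesis). Given a chain $\{J_\lambda\}$ in $\mathcal{F}$, its union $\bigcup_\lambda J_\lambda$ is again an ideal containing $I$ and not containing $e$ (since $e$ lies in the union only if it lies in some $J_\lambda$), so $\mathcal{F}$ is closed under unions of chains. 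Zorn's Lemma then yields a maximal element $M_e$ of $\mathcal{F}$, which has the required properties.

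Next I would show $M_e$ is prime. Suppose $x R y \subseteq M_e$ but $x \notin M_e$ and $y \notin M_e$. Then $M_e + \langle x \rangle$ and $M_e + \langle y \rangle$ both strictly contain $M_e$, hence by maximality both contain $e$, so $e = m_1 + a$ and $e = m_2 + b$ with $m_1, m_2 \in M_e$, $a \in \langle x \rangle$, $b \in \langle y \rangle$. Using $e = e^2$ one computes $e = e \cdot e = (m_1 + a)(m_2 + b) = m_1 m_2 + m_1 b + a m_2 + a b$; the first three terms lie in $M_e$, and the point is that $ab \in \langle x \rangle \langle y \rangle \subseteq \langle x R y \rangle \cup (\text{lower-order terms})$, which one must check lands in $M_e$ — here is where a little care with the precise form of elements of $\langle x \rangle \langle y \rangle$ in a possibly non-unital ring is needed, but the conclusion $e \in M_e$ follows, contradicting $e \notin M_e$. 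Hence $M_e$ is prime.

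Finally I would show $M_e \in \RT(\Spec(R))$. By Lemma~\ref{ddglb}, it suffices to show that $M_e$ is not the greatest lower bound of any downward directed $S \subseteq \Spec(R)$ with $M_e \notin S$. Suppose toward a contradiction that $M_e = \mathrm{glb}(S)$ for such an $S$; by Proposition~\ref{primeintersect}, $M_e = \bigcap_{J \in S} J$. Each $J \in S$ strictly contains $M_e$ (it contains $M_e$ since $M_e$ is a lower bound, and cannot equal $M_e$ since $M_e \notin S$), so by maximality of $M_e$ in $\mathcal{F}$ we have $e \in J$ for every $J \in S$. Therefore $e \in \bigcap_{J \in S} J = M_e$, contradicting $e \notin M_e$. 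Hence no such $S$ exists, so $M_e \in \RT(\Spec(R))$.

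The main obstacle I expect is the non-unital bookkeeping in the primeness argument: without a $1$, the ideal generated by $x$ is $\Z x + Rx + xR + RxR$ rather than just $RxR$, so verifying that the cross term $ab$ lands in $M_e$ requires expanding these products and repeatedly using $x R y \subseteq M_e$ together with the fact that $M_e$ is a two-sided ideal. Everything else — the Zorn argument and the $\RT$ membership — is routine.
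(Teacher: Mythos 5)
Your Zorn's Lemma construction of $M_e$ and your argument that $M_e \in \RT(\Spec(R))$ are both correct and essentially identical to the paper's (the paper phrases the latter via Lemma~\ref{ddglb2}, i.e., $M_e \subsetneq \bigcap\{J \in \Spec(R) \mid M_e \subsetneq J\}$ because every prime strictly containing $M_e$ must contain $e$; your version via Lemma~\ref{ddglb} and Proposition~\ref{primeintersect} is the same argument). The problem is the primeness step, and the difficulty you flag there is not mere bookkeeping --- as set up, the argument does not close. You need the cross term $ab$ with $a \in \langle x\rangle$, $b \in \langle y\rangle$ to lie in $M_e$, but $\langle x\rangle\langle y\rangle$ contains summands such as $(nx)(my) = nm\,xy$ coming from the $\Z x$ and $\Z y$ parts of the principal ideals, and these have no factor from $R$ between $x$ and $y$, so the hypothesis $xRy \subseteq M_e$ says nothing about them. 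The containment $\langle x\rangle\langle y\rangle \subseteq M_e$ can genuinely fail: in the non-unital ring $R = 2\Z$ with $x = y = 2$ and the ideal $P = 8\Z$ one has $xRy = 8\Z \subseteq P$ while $xy = 4 \notin P$.

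The repair is to abandon the element-wise test and use instead the equivalent ideal-theoretic characterization of primeness recalled at the start of Section~\ref{primessection}, which is what the paper does: if $J_1, J_2$ are ideals with $J_1, J_2 \not\subseteq M_e$, then $M_e \subsetneq M_e + J_i$ for $i=1,2$, so $e \in M_e + J_1$ and $e \in M_e + J_2$ by maximality, and
$$e = e^2 \in (M_e + J_1)(M_e + J_2) \subseteq M_e + J_1J_2,$$
where now every cross term lies in $M_e$ because $M_e$, $J_1$, and $J_2$ are all two-sided ideals. Hence $J_1J_2 \not\subseteq M_e$, and $M_e$ is prime. This version requires no non-unital bookkeeping at all.
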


\begin{proof}
Let $S$ be the set of all the ideals of $R$ that contain $I$ but not $e$. Then it is easy to see that the union of any chain of ideals from $S$ is an ideal that contains $I$ but not $e$. Hence, by Zorn's Lemma, there exists an ideal $M_e$ of $R$ maximal with respect to not containing $e$, such that $I \subseteq M_e$.

To show that $M_e$ is prime, suppose that $J_1$ and $J_2$ are ideals of $R$ such that $J_1, J_2 \not\subseteq M_e$. Then $M_e \subsetneq M_e + J_1$ and $M_e \subsetneq M_e + J_2$, which implies that $e \in M_e + J_1$ and $e \in M_e + J_2$, by the maximality of $M_e$. Hence $$e = e^2 \in (M_e + J_1)(M_e + J_2) \subseteq M_e + J_1J_2,$$ and therefore $M_e \subsetneq M_e + J_1J_2$. It follows that $J_1J_2 \not\subseteq M_e$, and thus $M_e$ is prime. 

Finally, let $T = \{J \in \Spec(R) \mid M_e \subsetneq J\}$. Then $e \in J$ for all $J \in T$, by the maximality of $M_e$. Hence $e \in \bigcap_{J \in T} J$, and therefore $M_e \subsetneq \bigcap_{J \in T} J$. Thus, by Lemma~\ref{ddglb2}, we conclude that $M_e \in \RT(\Spec(R))$.
\end{proof}

\begin{proposition}
Let $K$ be a field, $E$ a graph, $I \in \Spec(L_K(E))$, and $H = I\cap E^0$. If $E^0 \setminus H = \MT(u)$ for some $u \in E^0$, then $I \in \RT(\Spec(L_K(E)))$. In this situation either $u$ is not regular or it is the source of a cycle.
\end{proposition}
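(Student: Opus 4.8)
The plan is to show first that $I \in \RT(\Spec(L_K(E)))$ by exhibiting $I$ as the intersection of all the prime ideals strictly containing it and then invoking Lemma~\ref{ddglb2} in its contrapositive form; and then to prove the dichotomy about $u$ by a structural argument. For the first part, I would set $T = \{J \in \Spec(L_K(E)) \mid I \subsetneq J\}$ and aim to show $\bigcap_{J \in T} J \subsetneq I$ fails, i.e.\ that $I = \bigcap_{J \in T} J$ is impossible — equivalently, that $I$ is \emph{locally closed}. The key observation is that since $E^0 \setminus H = \MT(u)$, the vertex $u$ is a \emph{minimum} of the maximal tail $E^0 \setminus H$ in the order $\geq$ on $E^0$: every $v \in E^0\setminus H$ satisfies $v \geq u$. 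Now any prime ideal $J \supsetneq I$ has $J \cap E^0 = H' \supseteq H$, and $E^0\setminus H'$ is a proper subset of $\MT(u)$ closed under MT1, MT2, MT3; I would argue that there is a single ``largest'' such strictly-larger prime, or more directly, that $u \in J$ for every $J \in T$: if $u \notin J \cap E^0$, then $u \in E^0 \setminus H'$, forcing $\MT(u) \subseteq E^0\setminus H'$ by MT1, so $H' \subseteq H$, contradicting $I \subsetneq J$ together with the fact (Remark~\ref{HcupstuffcapE0equalsH} and Theorem~\ref{primeclass}) that $I$ is determined by $H$ and any breaking-vertex/cycle data at $u$. Care must be taken with the cases (2) and (3) of Theorem~\ref{primeclass} where $I$ carries extra generators beyond $\langle H\rangle$; in those cases one shows the extra generator is also in every larger prime. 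Once $u \in J$ for all $J \in T$, we get $u \in \bigcap_{J\in T} J$, whence $\bigcap_{J\in T} J \supsetneq I$ (as $u \notin H = I \cap E^0$), so $I$ is locally closed, and by the contrapositive of Lemma~\ref{ddglb2}, $I \in \RT(\Spec(L_K(E)))$.

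For the second part, suppose toward a contradiction that $u$ is regular and is not the source of any cycle. Regularity means $\so^{-1}(u)$ is finite and nonempty; let $\ra(\so^{-1}(u)) = \{w_1, \dots, w_k\} \subseteq E^0$, all lying in $\MT(u) = E^0 \setminus H$ since $H$ is hereditary would force $u \in H$ otherwise — more precisely, each $w_j \geq$-dominates nothing forcing it out, and since $E^0\setminus H$ satisfies MT2, at least one $w_j \in E^0 \setminus H$; in fact since $u$ is minimal in $E^0\setminus H$ and each $w_j < u$ strictly (no cycle through $u$), we would need $w_j \in H$ for all $j$, but then saturation of $H$ forces $u \in H$, i.e.\ $u \notin E^0\setminus H$, a contradiction. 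I would make this precise: $u$ regular and not on a cycle means every edge out of $u$ lands at a vertex strictly below $u$; since $u$ is the minimum of $E^0\setminus H$, none of those range vertices can be in $E^0\setminus H$ (else they'd be $\geq u$, contradicting strictness), so $\ra(\so^{-1}(u)) \subseteq H$; saturation of $H$ then yields $u \in H$, contradicting $E^0\setminus H = \MT(u) \ni u$. Hence $u$ is a sink or on a cycle; if it is not a sink but has finite out-degree it must be regular, so being non-regular means being a sink or having infinite out-degree — thus ``$u$ is not regular'' (a sink, or with $|\so^{-1}(u)| \geq \aleph_0$) or ``$u$ is the source of a cycle'' covers all remaining possibilities.

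The main obstacle I anticipate is the bookkeeping in the first part when $I$ is of type (2) or (3) in Theorem~\ref{primeclass}, i.e.\ when $I \supsetneq \langle H \cup \{v^H \mid v \in B_H\}\rangle$: one must verify that the ``extra'' generator (a missing breaking vertex term $u^H$, or an irreducible polynomial $f(c)$ in a WK cycle $c$ based at $u$) also lies in every prime strictly above $I$, so that the intersection $\bigcap_{J \in T} J$ genuinely exceeds $I$ and not merely at the vertex level. For type (3), since $c$ is a WK cycle at $u$ and $E^0\setminus H = \MT(u)$, any larger prime either enlarges $H$ (killing $u$ and hence $c$ entirely, so $f(c) \in J$) or keeps the same $H$ but then, being a distinct prime with the same vertex set, must contain a \emph{different} irreducible $f'(c)$; the intersection over all such contains $\langle H \cup \{v^H\} \cup K[c,c^{-1}]_{>0}\text{-stuff}\rangle \supsetneq I$, using that $K[x,x^{-1}]$ has infinitely many irreducibles. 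A clean way to finish uniformly is: in all three types, $u \notin I$ (as $u \in E^0\setminus H$) but $u \in J$ for every $J \supsetneq I$, because passing to any strictly larger prime forces $H$ to grow (types 1 and the vertex-set in type 2) or is handled as above — so $u \in \bigcap_{J\in T} J \setminus I$, giving local closedness directly without needing to track the polynomial data, provided one checks that in type (2)/(3) a prime strictly larger than $I$ with the \emph{same} vertex set $H$ still cannot contain $u$... which it can't, since $u \in E^0\setminus H = E^0 \setminus (J\cap E^0)$. So the uniform statement ``$u \in J$ for all $J \in T$'' in fact requires $H$ to strictly grow, and the existence of at least one such $J$ (hence the nonemptiness needed for the argument) is where I'd be most careful, using that $E^0\setminus H = \MT(u)$ is strictly larger than many smaller maximal tails (e.g.\ $\MT(w)$ for $w$ strictly below $u$, when such exist) — and when no such $w$ exists, $I$ is actually a maximal ideal and the claim $I \in \RT$ is immediate since there are no larger primes at all and a maximal element trivially lies in $\RT(P)$.
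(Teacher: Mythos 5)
Your second half (the dichotomy on $u$) is correct and is essentially the paper's own argument: $H$ is saturated, so $\MT(u) = E^0\setminus H$ satisfies MT2, and a regular $u$ with an edge whose range lies in $\MT(u)$ must be the source of a cycle. The problem is in the first half. Your strategy is to prove the stronger statement that $I$ is \emph{locally closed}, i.e.\ $I \subsetneq \bigcap\{J \in \Spec(L_K(E)) \mid I \subsetneq J\}$, and then apply the contrapositive of Lemma~\ref{ddglb2}. Local closedness is sufficient for $I \in \RT(\Spec(L_K(E)))$ but not necessary (compare the paper's remark about $\{0\}$ in $\Spec(\Z)$), and it genuinely fails for some of the ideals covered by this proposition. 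Concretely, take $I = \langle H \cup \{v^H \mid v \in B_H\}\rangle$ of type $(1)$ in Theorem~\ref{primeclass}, where $u$ is the source of a WK cycle $c$. Then each $\langle H \cup \{v^H \mid v \in B_H\} \cup \{f(c)\}\rangle$, with $f$ irreducible in $K[x,x^{-1}]$, is a prime strictly containing $I$ with the same vertex set $H$, and since $K[x,x^{-1}]$ has infinitely many pairwise non-associate irreducibles, the intersection of these primes is exactly $I$ --- the opposite of what you assert when you claim the intersection strictly exceeds $I$ ``using that $K[x,x^{-1}]$ has infinitely many irreducibles.'' So in this case $\bigcap_{J\supsetneq I} J = I$, the ideal $I$ is not locally closed, and your method cannot conclude. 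You do notice that the uniform claim ``$u \in J$ for all $J \supsetneq I$'' breaks down when $J \cap E^0 = H$, but you never close the resulting hole; worrying about the nonemptiness of $T$ is not the real issue.

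The paper's proof avoids local closedness altogether. It assumes $I \notin \RT(\Spec(L_K(E)))$, uses Lemma~\ref{ddglb} and Proposition~\ref{primeintersect} to produce a downward directed $S$ with $I = \bigcap_{J\in S} J$ and $I \notin S$, and deduces from $\MT(u) = \bigcup_{J\in S}(E^0\setminus H_J)$ and MT1 that some $J \in S$ satisfies $J \cap E^0 = H$. The contradiction then rests on two inputs your proposal does not use: by \cite[Section 5]{Ranga}, the interval between $I$ and a prime $J$ with $J \cap E^0 = I \cap E^0$ contains at most one prime ideal, while by Lemma~\ref{largedd}(2) the directed set $S$ (having no least element) must contain an infinite strictly descending chain below $J$, all of whose members lie strictly between $I$ and $J$. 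Any repair of your argument needs this interval-finiteness input (or an equivalent) to handle the WK-cycle case; note that the family $\{\langle H \cup \{v^H \mid v \in B_H\} \cup \{f(c)\}\rangle\}_f$ is an antichain, hence not downward directed, which is ultimately why $I \in \RT(\Spec(L_K(E)))$ even though $I$ is not locally closed there.
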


\begin{proof}
Seeking a contradiction, suppose that $E^0 \setminus H = \MT(u)$ but $I \notin \RT(\Spec(L_K(E)))$. Then there exists a downward directed $S \subseteq \Spec(R)$ such that $I = \bigcap_{J \in S} J$ but $I \notin S$, by Lemma~\ref{ddglb} and Proposition~\ref{primeintersect}. For each $J \in S$ let $H_J = J \cap E^0$. Then $H =\bigcap_{J \in S} H_J$, and hence $\MT(u) = \bigcup_{J \in S} (E^0 \setminus H_J)$. Thus, $u \in E^0 \setminus H_J$ for some $J \in S$. Since $E^0 \setminus H_J$ satisfies MT1 (as noted in Section~\ref{IntroSection}), we have $\MT(u) \subseteq E^0 \setminus H_J$. Thus $\MT(u) = E^0 \setminus H_J$, and therefore $H=H_J$.

Since $I \subsetneq J$, by Theorem~\ref{primeclass}, $H=H_J$ is possible only in the following three situations.
\begin{enumerate}
\item[$(1)$] $I = \langle H \cup \{v^H \mid v \in B_H\}\rangle$ and $J =\langle H \cup \{v^H \mid v \in B_H\} \cup \{f(c)\}\rangle$, where $c \in \pth(E) \setminus E^0$ is a WK cycle  having source $u$, and $f(x)$ is an irreducible polynomial in $K[x, x^{-1}]$.
\item[$(2)$] $I = \langle H \cup \{v^H \mid v \in B_H \setminus \{u\}\}\rangle$ and $J =\langle H \cup \{v^H \mid v \in B_H\}\rangle$.
\item[$(3)$] $I = \langle H \cup \{v^H \mid v \in B_H \setminus \{u\}\}\rangle$ and $J =\langle H \cup \{v^H \mid v \in B_H\} \cup \{f(c)\}\rangle$, where $c \in \pth(E) \setminus E^0$ is a WK cycle  having source $u$, and $f(x)$ is an irreducible polynomial in $K[x, x^{-1}]$.
\end{enumerate}
By~\cite[Section 5]{Ranga}, which describes the prime ideals of $L_K(E)$ having the same intersection with $E^0$, in cases (1) and (2) there are no prime ideals strictly between $I$ and $J$, while in case (3) the only prime ideal strictly between $I$ and $J$ is $\langle H \cup \{v^H \mid v \in B_H\}\rangle$. But, by Lemma~\ref{largedd}(2), for every $J \in S$ there must exist $J' \in S$ such that $J' \subsetneq J$ (where necessarily $I \subsetneq J'$), giving the desired contradiction.

As observed in Remark~\ref{HcupstuffcapE0equalsH}, since $I$ is an ideal, $H$ must be saturated. Hence $\MT(u)$ satisfies MT2, as noted in Section~\ref{IntroSection}. It is easy to see that this can happen only if either $u$ is not regular or it is the source of a cycle.
\end{proof}

\section{Condition KAP}\label{closingremarks}

We start this section by revisiting a question mentioned in Section \ref{primessection} (following Theorem~\ref{Kapthrm}), one of the central currently-unresolved questions regarding the prime spectra of arbitrary rings.  

\begin{notation} \label{KAPdef}
We assign the indicated name in case the poset $\, (P, \leq)$ satisfies the following property.  

\medskip

(KAP) \ \  For all $p,q\in P$ such that $p < q$, there exist $p',q' \in P$ such that $p \leq p' < q' \leq q$, 

 \qquad \qquad and no $t\in P$ satisfies $p' <t <q'$.
\hfill $\Box$ 
\end{notation}

\begin{question}\label{KAPquestion}
Is there a ring $R$ such that $\, (\Spec(R), \subseteq)$ does not satisfy \emph{KAP}?  
\end{question}

With the comments made subsequent to Corollary~\ref{AnstoRealizationQ} as context, we ask the following.  

\begin{question}\label{DDquestion}
Is there a ring $R$ such that $\, (\Spec(R), \subseteq)$ does not satisfy \emph{DD}? $($Moreover, if  so, can such an $R$ be a Leavitt path algebra?$)$
\end{question}

Although on first glance the two conditions KAP and DD seem quite different, it turns out that there is a connection between them, one which relates Questions~\ref{KAPquestion} and~\ref{DDquestion} to each other.   We establish this connection in Proposition~\ref{KAPiffDD}.  

In general, conditions KAP and DD are independent of one another, as the next two examples show.

\begin{example}
For each $i \in \Z^-$, let $Z_i$ be a copy of $\Z^-$, let $Z_0$ be a copy of $\Z^-$ with a least element $-\infty$ adjoined, and let $P = {\bigsqcup}_{i \in \Z^-\cup \{0\}} Z_i$.  Define a relation $\leq_P$ on all elements $p,q\in P$ by letting $p\leq_P q$ if and only if one of the following holds:
\begin{enumerate}
\item[$(1)$] $p,q \in Z_i$ for some $i \in \Z^-\cup \{0\}$ and $p \leq q$, 
\item[$(2)$] $p \in Z_0 \setminus \{-\infty\}$ and $q \in Z_r$ for some $p \leq r \leq -1$. 
\end{enumerate}
Pictorially, $P$ can be represented as follows.
$$\xymatrix@=.5pc{ 
{\bullet} \ar@{-}[dr] & & & &\\
& {\bullet} \ar@{-}[dr] & & &\\
{\bullet} \ar@{-}[dr] & & \ar@{.}[dr] & & & &\\
& {\bullet} \ar@{-}[dr] & & {\bullet} \ar@{-}[dd]\\
& & \ar@{.}[dr] & \\
& & & {\bullet} \ar@{-}[dd] \\
& & & \\
& & & \ar@{.}[dd] \\
& & & & \\
& & & {\bullet} \\
}$$
Then it is easy to see that $(P,\leq_P)$ is a partially ordered set. Moreover, since every element of $P \setminus \{-\infty\}$ has an immediate successor, $P$ satisfies KAP. Also, $\RT(P) = P \setminus Z_0$, from which it follows that every downward directed subset of $\RT(P)$ is contained in some $Z_i$ ($i<0$). Thus no downward directed subset of $\RT(P)$ has $-\infty$ as the greatest lower bound, and hence $P$ does not satisfy DD.  \hfill $\Box$ 
\end{example}

\begin{example}
Let $[0,1] \subseteq \R$ denote the closed interval between $0$ and $1$ on the real number line. For each $i \in [0,1]$, let $Z_i$ be a copy of $\Z^-$, and let $P = [0,1] \ {\sqcup} \  {\bigsqcup}_{i \in [0,1]} Z_i$. Define a relation $\leq_P$ on all elements $p,q\in P$ by letting $p\leq_P q$ if and only if one of the following holds:
\begin{enumerate}
\item[$(1)$] $p,q \in Z_i$ for some $i \in [0,1]$ and $p \leq q$, 
\item[$(2)$] $p,q \in [0,1]$ for some and $p \leq q$, 
\item[$(3)$] $p \in [0,1]$ and $q \in Z_r$ for some $p \leq r \leq 1$.
\end{enumerate}
Then, as in the previous example, $(P,\leq_P)$ is a partially ordered set. Also, $P$ satisfies DD since every element of $P$ is the greatest lower bound of some $Z_i$ or subset thereof, and $\RT(P) = P \setminus [0,1] = {\bigsqcup}_{i \in [0,1]} Z_i$. However, $P$ clearly does not satisfy KAP, since it contains $[0,1]$.  \hfill $\Box$
\end{example}

The two previous examples notwithstanding, there is  however a connection between the conditions KAP and DD for partially ordered sets satisfying GLB and a strengthened version of DC. (Note that the partially ordered sets in both of those examples satisfy GLB.)

\begin{proposition}\label{KAPiffDD}
Let $\,(P,\leq)$ be a partially ordered set satisfying \emph{GLB}, and suppose that for every downward directed $S \subseteq P$ and every $p \in P$ satisfying $p > \mathrm{glb}(S)$, we have $p \geq s$ for some $s \in S$. Then $P$ satisfies \emph{KAP} if and only if $P$ satisfies \emph{DD}.
\end{proposition}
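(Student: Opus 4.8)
The plan is to prove both implications, the first directly and the second by contraposition, using throughout a simple consequence of the hypotheses which I will call $(\star)$: \emph{if $c\in P\setminus\RT(P)$ then $c$ has no immediate successor} (i.e.\ no $d\in P$ satisfies $c<d$ with nothing strictly between $c$ and $d$). To get $(\star)$, write $c=\mathrm{glb}(S)$ for a downward directed $S$ with no least element (so $c\notin S$); if $d$ were an immediate successor of $c$, then, since $d>\mathrm{glb}(S)$, the strengthened hypothesis gives $s\in S$ with $d\ge s$, and downward directedness together with the absence of a least element of $S$ gives $s'\in S$ with $c<s'<s\le d$, producing an element strictly between $c$ and $d$ --- a contradiction. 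I also use freely that, by GLB, every downward directed subset of $P$ has a greatest lower bound in $P$.

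For KAP $\Rightarrow$ DD: given a downward directed $S$ with $g:=\mathrm{glb}(S)\in P$, I must produce a downward directed $T\subseteq\RT(P)$ with $\mathrm{glb}(T)=g$. If $g\in\RT(P)$ take $T=\{g\}$; otherwise fix a downward directed $S_0$ with no least element and $\mathrm{glb}(S_0)=g$, and set $U=\{y\in P\mid y>g\}$. First I would check, using the strengthened hypothesis applied to $S_0$, that $U$ is downward directed with $\mathrm{glb}(U)=g$. The heart of the argument is the claim that \emph{for every $y\in U$ there exists $t\in\RT(P)$ with $g<t\le y$}: if not, then $(g,y]\cap\RT(P)=\emptyset$, and applying KAP to $g<y$ yields $a<b$ with $g\le a<b\le y$ and nothing strictly between $a$ and $b$; but then $b$ is an immediate successor of $g$ (if $a=g$) or of $a\in(g,y]\subseteq P\setminus\RT(P)$ (if $a>g$), contradicting $(\star)$ either way. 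Granting the claim, $T:=U\cap\RT(P)$ is nonempty and satisfies: for every $y\in U$ some $t\in T$ has $t\le y$; hence $\mathrm{glb}(T)=\mathrm{glb}(U)=g$, and for $t_1,t_2\in T$ one picks $y\in U$ below both and then $t_3\in T$ below $y$, so $T$ is downward directed. Since $T\subseteq\RT(P)$, this establishes DD.

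For DD $\Rightarrow$ KAP I argue by contraposition: if KAP fails there are $p<q$ with the interval $[p,q]$ \emph{dense}, meaning that between any two comparable elements of $[p,q]$ lies a third. The key lemma is that \emph{a dense interval $[p,q]$ has $\RT(P)\cap[p,q)=\emptyset$}. I would prove this by fixing $t$ with $p\le t<q$, choosing (Zorn) a maximal chain $M$ in $(t,q]=\{x\mid t<x\le q\}$, and showing $\mathrm{glb}(M)=t$: the greatest lower bound $g'$ of $M$ exists by GLB and satisfies $t\le g'\le q$, and if $g'>t$ then either $g'\notin M$, so $M\cup\{g'\}$ properly enlarges $M$, or $g'=\min M$, and density yields $z$ with $t<z<g'$ so that $M\cup\{z\}$ properly enlarges $M$ --- both impossible; hence $g'=t$, and since $t\notin M$ the chain $M$ has no least element, so $t\notin\RT(P)$. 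Finally I would use density to build $q>x_1>x_2>\cdots$ with each $x_i>p$, put $g:=\mathrm{glb}\{x_i\mid i\ge1\}\in[p,q)$ (noting $g<x_1$), apply DD to obtain a downward directed $T\subseteq\RT(P)$ with $\mathrm{glb}(T)=g$, and then, since $x_1>g=\mathrm{glb}(T)$, invoke the strengthened hypothesis to get $t_0\in T$ with $t_0\le x_1<q$; thus $t_0\in\RT(P)\cap[p,q)$, contradicting the key lemma. So KAP must hold.

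The two genuine technical cruxes are these intermediate statements: the claim that $(g,y]$ meets $\RT(P)$ in the forward direction, and the lemma that dense intervals avoid $\RT(P)$ (proved via a maximal chain) in the backward direction; everything else is routine bookkeeping with downward directedness and the definitions of $\RT$, GLB, DC, and DD. The place where one truly needs the strengthened form of DC rather than the weaker condition DC of Notation~\ref{GLBDCDDnotation} is in $(\star)$ and in the final extraction of $t_0\in T$ with $t_0\le x_1$: without it the set $T$ produced by DD could consist of elements all incomparable to $x_1$, and the contradiction would not materialize.
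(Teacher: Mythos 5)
Your proof is correct and follows essentially the same route as the paper's: both directions rest on maximal chains obtained via Zorn's lemma, on the observation (your $(\star)$, which the paper proves inline when showing its element $q_1$ lies in $\RT(P)$) that an element outside $\RT(P)$ cannot have an immediate successor, and on the strengthened hypothesis to pull an element of a downward directed set below a given element. The only organizational differences are that you prove DD $\Rightarrow$ KAP by contraposition via a ``dense interval'' lemma where the paper argues directly, and that your set $U=\{y\in P\mid y>g\}$ coincides, by the strengthened hypothesis, with the filter $S'$ that the paper generates from $S$ via Lemma~\ref{largedd}.
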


\begin{proof}
Suppose that $P$ satisfies KAP. Let $S \subseteq P$ be downward directed without least element, and set $\mathrm{glb}(S) = p$. We wish to find a downward directed subset of $\RT(P)$ whose greatest lower bound is $p$. Upon replacing $S$ with a larger set, by Lemma~\ref{largedd} we may assume that if $t \in P$ and $s \in S$ are such that $s \leq t$, then $t \in S$. We shall show that $S \cap \RT(P)$ is downward directed and has $p$ as the greatest lower bound.

Let $q \in S$ be any element. Then, by Lemma~\ref{largedd}(2), there exists $q' \in S$ such that $q' < q$. Since $P$ satisfies KAP, there exist $q_1,q_2 \in P$ such that $q' \leq q_1 < q_2 \leq q$ and there are no elements of $P$ strictly between $q_1$ and $q_2$. We claim that $q_1 \in \RT(P)$. If not, then there exists $T \subseteq P$ downward directed without least element, such that $\mathrm{glb}(T)  = q_1$. Again applying Lemma~\ref{largedd} we may assume that if $t \in P$ and $s \in T$ are such that $s \leq t$, then $t \in T$. By our hypothesis on $P$, since $q_1 < q_2$, there exists $t \in T$ such that $t \leq q_2$, and hence $q_2 \in T$. But then, by Lemma~\ref{largedd}(2), there exists $s \in T$ such that $q_1 < s < q_2$, contradicting our choice of $q_1$ and $q_2$. Thus $q_1 \in \RT(P)$, and since $q' \leq q_1$, it follows that $q_1 \in S$. We have therefore shown that for every $q \in S$ there exists $q_1 \in S \cap \RT(P)$ such that $q_1 < q$. 

Next, we show that $S \cap \RT(P)$ is downward directed. Given any $s,t \in S \cap \RT(P)$ there must be some $r \in S$ such that $r \leq s$ and $r \leq t$, and hence there must be some $r_1 \in S \cap \RT(P)$ such that $r_1 < r$, and therefore also $r_1 \leq s$ and $r_1 \leq t$. Finally, we note that $\mathrm{glb}(S \cap \RT(P)) = p$. For $S \cap \RT(P) \subseteq S$ implies that $p \leq \mathrm{glb}(S \cap \RT(P))$. On the other hand if $p \neq \mathrm{glb}(S \cap \RT(P))$, then our assumption on $P$ implies that $q \leq \mathrm{glb}(S \cap \RT(P))$ for some $q \in S$. But then $q_1 < q$ for some $q_1 \in S \cap \RT(P)$, and hence $q_1 < \mathrm{glb}(S \cap \RT(P))$, which is absurd. Therefore $\mathrm{glb}(S \cap \RT(P)) = p$.

\smallskip

Conversely, suppose that $P$ satisfies DD. Let $p,q \in P$ be such that $p< q$. We wish to find $p', q' \in P$ such that $p \leq p' < q' \leq q$, and there are no elements of $P$ strictly between $p'$ and $q'$. First, suppose that $p \in \RT(P)$. Let $S$ be the set of all chains of elements $s$ of $P$ such that $p < s \leq q$. The union of any chain of chains in $S$ is again a chain in $S$, and hence, by Zorn's lemma there is a maximal chain $C \in S$. Since $C \subseteq P$ is downward directed, $p \notin C$, and $p \in \RT(P)$, we have $p \neq \mathrm{glb}(C)$, by Lemma~\ref{ddglb}. Also, by the maximality of $C$, there cannot be an element of $P$ strictly between $p$ and $\mathrm{glb}(C)$. Hence, letting $p=p'$ and $\mathrm{glb}(C) = q'$, we have the desired conclusion.

We may therefore assume that $p \notin \RT(P)$. Since $P$ satisfies DD, there exists a downward directed $S \subseteq \RT(P)$ without least element, such that $\mathrm{glb}(S) = p$. By our hypothesis on $P$, since $p < q$, there exists $p' \in S$ such that $p' \leq q$. Moreover, by Lemma~\ref{largedd}(2), we may assume that $p' < q$. Since $p' \in \RT(P)$, by the argument in the previous paragraph, there exists $q' \in P$ such that $p' < q' \leq q$ and there are no elements of $P$ strictly between $p'$ and $q'$, again giving the desired conclusion.
\end{proof}

\begin{remark}\label{DCvsDCplus}
  A moment's reflection yields that DC is equivalent to a modified version of DC, in which the requirement that $p\geq \mathrm{glb}(S)$ is replaced by $p > \mathrm{glb}(S)$. Thus the second hypothesis on $P$ in Proposition~\ref{KAPiffDD} is almost identical to condition DC; the only difference is that $p\in \RT(P)$ in DC, while $p \in P$ in the Proposition.     So the condition of the Proposition is stronger than DC.   Indeed, consider the poset $P$ having the following Hasse diagram. 
  $$\xymatrix@=.5pc{ 
  & \bullet^{q_1} \\
  \bullet^{r_1} \ar@{-}[ru] \ar@{-}[dd] & \\
  & \bullet^{q_2} \ar@{-}[uu] \\
  \bullet^{r_2} \ar@{-}[ur] & \\
  & \bullet^{q_3} \ar@{-}[uu]  \ar@{-}[dd]   \\
  \bullet^{r_3} \ar@{-}[uu] \ar@{-}[ur]  {}  
  \ar@{-}[dd]   & \\
 &  \\
  &  \\
  &  \bullet^q \ar@{.}[uu]  \\
\bullet^r \ar@{.}[uu] \ar@{-}[ur] & \\
 & \\  }$$
If $S_r = \{r_i \ | \ i\in \Z^+\}$ and $S_q = \{q_i \ | \ i\in \Z^+\}$, then $r = {\rm glb}(S_r)$ and $q = {\rm glb}(S_q)$.   We see that $\RT(P) = S_r \cup S_q$, so that every $p \in \RT(P)$ indeed has the property specified in condition DC.   However, if we consider $q \in P$, then $q > {\rm glb}(S_r)$, but there is no $r_i \in S_r$ for which $q \geq r_i$.   So $P$ does not satisfy the second hypothesis of Proposition~\ref{KAPiffDD}.    \hfill $\Box$
\end{remark}

We have, by Proposition~\ref{primeintersect}, that $\Spec(R)$ satisfies GLB for any ring $R$.  Furthermore, it is clear that  any totally ordered set $P$ satisfies  the second hypothesis of Proposition~\ref{KAPiffDD}. So we get the following. 

\begin{corollary}\label{KAPiffDDintotalorder}  Let $R$ be a ring for which $\,\Spec(R)$ is totally ordered.   Then $\, \Spec(R)$  satisfies condition \emph{KAP} if and only if $\, \Spec(R)$ satisfies condition \emph{DD}.     
\end{corollary}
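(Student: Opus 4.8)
The plan is to deduce this immediately from Proposition~\ref{KAPiffDD} by checking that its two standing hypotheses are automatically satisfied when $P = \Spec(R)$ is totally ordered. First, for \emph{any} ring $R$, Proposition~\ref{primeintersect} tells us that $\Spec(R)$ satisfies GLB, so the first hypothesis of Proposition~\ref{KAPiffDD} is in place with no further argument.

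Second, I would verify that every totally ordered set $P$ satisfies the strengthened form of DC appearing in Proposition~\ref{KAPiffDD}, namely: for every downward directed $S \subseteq P$ and every $p \in P$ with $p > \mathrm{glb}(S)$, there exists $s \in S$ with $p \geq s$. The argument is the obvious one: suppose instead that $p \not\geq s$ for all $s \in S$; since $P$ is totally ordered, $p$ and $s$ are comparable for each $s$, so $p < s$ for all $s \in S$. Then $p$ is a lower bound of $S$, whence $p \leq \mathrm{glb}(S)$, contradicting $p > \mathrm{glb}(S)$. Hence the second hypothesis of Proposition~\ref{KAPiffDD} holds for $P = \Spec(R)$.

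With both hypotheses verified, applying Proposition~\ref{KAPiffDD} to the poset $(\Spec(R), \subseteq)$ yields directly that $\Spec(R)$ satisfies KAP if and only if it satisfies DD, which is the assertion of the corollary.

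There is essentially no obstacle here: the statement is a genuine corollary, and the only content beyond invoking Proposition~\ref{primeintersect} and Proposition~\ref{KAPiffDD} is the one-line check that total ordering implies the strengthened DC hypothesis, which is already flagged as clear in the paragraph preceding the corollary. If desired, one could instead simply cite those two preparatory sentences and conclude.
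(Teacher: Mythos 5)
Your proposal is correct and follows exactly the route the paper takes: the corollary is deduced from Proposition~\ref{KAPiffDD} by noting that GLB holds by Proposition~\ref{primeintersect} and that total ordering forces the strengthened DC hypothesis (your one-line comparability argument is precisely the detail the paper leaves as ``clear''). Nothing is missing.
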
 

The  difficulty in determining an answer to Question~\ref{KAPquestion}  persists even when a key additional assumption is made on the underlying spectra.

\begin{question}\label{KAPfortotalorder}
Is there a ring $R$ such that $\, (\Spec(R), \subseteq)$ is totally ordered, and for which $\, (\Spec(R), \subseteq)$ does not satisfy \emph{KAP}?   

Equivalently, by Corollary~\ref{KAPiffDDintotalorder}:  Is there a  ring $R$ such that $\, (\Spec(R), \subseteq)$ is totally ordered, and for which $\, (\Spec(R), \subseteq)$ does not satisfy \emph{DD}?    
\end{question}

We ask one final  question regarding the prime spectrum of a Leavitt path algebra. It is motivated by the fact that in our main results the graph $E_P$ we associated to an arbitrary poset $P$ is always acyclic and contains no breaking vertices (see Lemma~\ref{chrislemma} and its proof).

\begin{question}
Let $\, (P, \leq)$ be a partially ordered set such that $\, (P, \leq) \cong (\Spec(L_K(E)), \subseteq)$ for some field $K$ and graph $E$. Does there necessarily exist a graph $F$,  where  $F$ is  acyclic and contains no breaking vertices, for which $(P, \leq) \cong  (\Spec(L_K(F)), \subseteq)$?   
\end{question}

\section*{Appendix: greatest lower bounds for chains}

We conclude the paper by showing that the condition GLB on a poset $P$ is equivalent to the condition that $P$ has a greatest lower bound for every chain. The argument is due to George Bergman.

\begin{lemma} \label{dd-closure-lemma}
Let $\, (P, \leq)$ be a downward directed poset. Then for any subset $S$ of $P$ there exists a downward directed subset $T$ of $P$ such that $S \subseteq T$ and $\, |T| \leq \mathrm{max}(|S|,\aleph_0)$.
\end{lemma}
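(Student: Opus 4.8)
\textbf{Proof proposal for Lemma~\ref{dd-closure-lemma}.}

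The plan is to build $T$ by a transfinite ``closing-off'' process: starting from $S$, repeatedly throw in, for each pair of elements already present, a common lower bound supplied by downward directedness of $P$, and iterate $\omega$ times. First I would dispose of the trivial case $S = \emptyset$: then $P$ is nonempty (being downward directed), so pick any single element and let $T$ be that singleton, which is downward directed and has cardinality $1 \le \aleph_0$. From now on assume $S \neq \emptyset$, so $\kappa := \max(|S|, \aleph_0) = |S| \cdot \aleph_0$ is an infinite cardinal with $|S| \le \kappa$.

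Next I would fix, using the axiom of choice, a function $b : P \times P \to P$ such that $b(x,y) \le x$ and $b(x,y) \le y$ for all $x,y \in P$; this exists precisely because $P$ is downward directed. Then define an increasing chain of subsets $S = T_0 \subseteq T_1 \subseteq T_2 \subseteq \cdots$ by
$$T_{n+1} = T_n \cup \{\, b(x,y) \mid x,y \in T_n \,\},$$
and set $T = \bigcup_{n \in \N} T_n$. The two things to check are the cardinality bound and downward directedness. For cardinality: by induction each $T_n$ has $|T_n| \le \kappa$, since $|T_{n+1}| \le |T_n| + |T_n \times T_n| \le \kappa + \kappa \cdot \kappa = \kappa$ (using that $\kappa$ is infinite); hence $|T| \le \aleph_0 \cdot \kappa = \kappa$, and of course $S = T_0 \subseteq T$. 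For downward directedness: given $x, y \in T$, both lie in some common $T_n$ (take the larger of the two indices), so $b(x,y) \in T_{n+1} \subseteq T$ is a lower bound for both in $T$. That completes the argument.

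I do not expect any genuine obstacle here — the lemma is a routine Skolem-type closure construction. The only points requiring a modicum of care are the cardinal arithmetic ($\kappa$ must be taken infinite so that $\kappa \cdot \kappa = \kappa$, which is why the $\aleph_0$ appears in the bound) and the invocation of choice to get the selector $b$; both are standard. One could alternatively phrase the iteration as a single transfinite recursion of length $\omega$, but the explicit union of the $T_n$'s is cleanest and avoids any ordinal bookkeeping beyond $\N$.
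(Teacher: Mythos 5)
Your proposal is correct and follows essentially the same argument as the paper: fix a choice function supplying common lower bounds, close off under it in $\omega$ stages starting from $S$, and take the union, with the same cardinality bookkeeping. The only difference is your explicit (and harmless, indeed slightly more careful) treatment of the case $S=\emptyset$, which the paper glosses over.
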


\begin{proof}
Since $P$ is downward directed, by the axiom of choice, there is a function $f : P \times P \to P$ such that $f(s,t) \leq s$ and $f(s,t) \leq t$ for all $s,t \in P$. 

For every $n\in \N$ define inductively a subset $T_n \subseteq P$ as follows. Let $T_0 = S,$ and assuming that $T_n$ is defined for some $n \in \N$, let $$T_{n+1} = T_n \cup \{f(s,t) \mid s,t \in T_n\}.$$ Note that $|T_{n+1}| \leq |T_n| + |T_n| \cdot |T_n|$, and so $|T_{n+1}| = |T_n|$ if $T_n$ is infinite, and $|T_{n+1}| < \aleph_0$ if $T_n$ is finite, for each $n \in \N$. Hence $|T_n| \leq \mathrm{max}(|S|,\aleph_0)$ for each $n \in \N$.

Now letting $T = \bigcup_{n \in \N} T_n$, we have $S \subseteq T$ and $|T| \leq \mathrm{max}(|S|,\aleph_0)$. Finally, $T$ is downward directed, since for all $s,t \in T$ there is some $n \in \N$ such that $s,t \in T_n$, and hence $f(s,t)\in T_{n+1} \subseteq T$.
\end{proof}

\begin{lemma} \label{dd-union-lemma}
Let $\, (P, \leq)$ be a partially ordered set and $S$ an uncountable downward directed subset of $P$. Then $S$ can be written as the union of a chain of downward directed subsets of $S$, all having cardinality less than $\, |S|$.
\end{lemma}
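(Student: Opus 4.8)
The plan is to run a standard filtration (or ``resolution'') argument: write $S$ as an increasing union, indexed by $\mathrm{cf}(|S|)$, of downward directed subsets of smaller cardinality, using Lemma~\ref{dd-closure-lemma} at each step to enlarge a rough approximation into a genuine downward directed set without increasing its cardinality too much. The reason to index by $\mathrm{cf}(|S|)$ rather than by $|S|$ is precisely to keep control of cardinalities at limit stages.

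Concretely, I would set $\kappa = |S|$ (an uncountable cardinal) and $\mu = \mathrm{cf}(\kappa)$, and fix a sequence $\langle \kappa_i \mid i < \mu\rangle$ of infinite cardinals with $\kappa_j \leq \kappa_i$ for $j \leq i$, each $\kappa_i < \kappa$, the sequence cofinal in $\kappa$, and $|i| \leq \kappa_i$ for every $i < \mu$. (If $\kappa$ is regular this is just $\kappa_i = \max(|i|,\aleph_0)$ for $i < \kappa = \mu$; if $\kappa$ is singular one takes any cofinal sequence of cardinals and arranges it to be nondecreasing, infinite, and $\geq \mu$.) Choosing an enumeration $S = \{s_\alpha \mid \alpha < \kappa\}$ and putting $X_i = \{s_\alpha \mid \alpha < \kappa_i\}$, the $X_i$ form an increasing chain of subsets of $S$ with $|X_i| \leq \kappa_i$ and $\bigcup_{i<\mu} X_i = S$ (because $\sup_{i<\mu}\kappa_i = \kappa$).

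Next I would define, by recursion on $i < \mu$, a downward directed subset $S_i \subseteq S$: given $S_j$ for all $j < i$, apply Lemma~\ref{dd-closure-lemma} with the ambient downward directed poset there taken to be $S$ itself, to the subset $X_i \cup \bigcup_{j<i} S_j$, obtaining a downward directed $S_i$ with $X_i \cup \bigcup_{j<i} S_j \subseteq S_i \subseteq S$ and $|S_i| \leq \max\!\bigl(|X_i \cup \bigcup_{j<i} S_j|,\, \aleph_0\bigr)$. The heart of the proof is the inductive bound $|S_i| \leq \kappa_i$: assuming $|S_j| \leq \kappa_j$ for $j < i$, one gets $|\bigcup_{j<i} S_j| \leq |i| \cdot \sup_{j<i}\kappa_j \leq \kappa_i \cdot \kappa_i = \kappa_i$ (using $|i| \leq \kappa_i$ and $\sup_{j<i}\kappa_j \leq \kappa_i$), hence $|X_i \cup \bigcup_{j<i} S_j| \leq \kappa_i$ and therefore $|S_i| \leq \max(\kappa_i,\aleph_0) = \kappa_i < \kappa$.

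Finally I would check that $\{S_i \mid i < \mu\}$ is the required chain: each $S_i$ is downward directed with $|S_i| < |S|$; the family is increasing, since $S_i \supseteq \bigcup_{j<i} S_j \supseteq S_j$ for $j < i$, so it is a chain under inclusion; and $\bigcup_{i<\mu} S_i \supseteq \bigcup_{i<\mu} X_i = S$ while each $S_i \subseteq S$, whence $\bigcup_{i<\mu} S_i = S$. The one genuinely delicate point — and the main obstacle — is exactly the cardinal arithmetic governing limit stages: a naive transfinite construction of length $\kappa$ that takes unions at limits can, when $\kappa$ is singular, produce an intermediate set of full cardinality $\kappa$; indexing by $\mathrm{cf}(\kappa)$ and maintaining the uniform bound $|S_i| \leq \kappa_i$ throughout is what circumvents this.
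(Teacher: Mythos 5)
Your proof is correct, and its engine is the same as the paper's: enumerate $S$, build an increasing transfinite chain of downward directed subsets by applying Lemma~\ref{dd-closure-lemma} at each step, take unions at limit stages, and maintain a cardinality bound throughout. The one real difference is your indexing by $\mathrm{cf}(|S|)$ with an auxiliary cofinal sequence of cardinals $\kappa_i$, which you present as necessary to avoid a collapse at limit stages when $\kappa=|S|$ is singular. That worry is unfounded: the paper simply runs the recursion over all $\alpha<\kappa$ and maintains the invariant $|S_\alpha|\leq\mathrm{max}(|\alpha|,\aleph_0)$, which is automatically $<\kappa$ at \emph{every} stage --- including limit stages, where $|\bigcup_{\beta<\alpha}S_\beta|\leq |\alpha|\cdot\sup_{\beta<\alpha}\mathrm{max}(|\beta|,\aleph_0)\leq\mathrm{max}(|\alpha|,\aleph_0)$ --- because every ordinal $\alpha<\kappa$ satisfies $|\alpha|<\kappa$ regardless of whether $\kappa$ is regular or singular. (Singularity would only be an issue if one tried to bound $|S_\alpha|$ by the supremum of the earlier bounds taken over a cofinal set, which is not what happens here.) So your cofinality apparatus is harmless but buys nothing; the direct length-$\kappa$ recursion is shorter and gives the same conclusion. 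One small point in the paper's favor worth internalizing: by also requiring $s_{\alpha'}\in S_\alpha$ for all $\alpha'<\alpha$, the paper gets $\bigcup_\alpha S_\alpha=S$ without needing the separate exhaustion sets $X_i$.
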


\begin{proof}
Let $|S| = \kappa$, and write $S = \{s_\alpha \mid \alpha \in \kappa\}$. We will construct for each $\alpha \in \kappa$ a downward directed subset $S_\alpha$ of $S$ satisfying following conditions:
\begin{enumerate}
\item[$(1_\alpha)$] $|S_\alpha| \leq \mathrm{max}(|\alpha|,\aleph_0)$,
\item[$(2_\alpha)$] $s_{\alpha'} \in S_{\alpha}$ for all $\alpha' < \alpha$,
\item[$(3_\alpha)$] $S_{\alpha'}\subseteq S_{\alpha}$ for all $\alpha' < \alpha$.
\end{enumerate}

Set $S_0 = \{s_0\}$, and assume inductively that there exists $\alpha \in \kappa$ such that for all $\beta <\alpha$ there are downward directed subsets $S_\beta$ of $S$ satisfying $(1_\beta), (2_\beta), (3_\beta)$. If $\alpha$ is a limit ordinal, let $S_\alpha = \bigcup_{\beta < \alpha} S_\beta$. Then $|S_\alpha| \leq \mathrm{max}(|\alpha|,\aleph_0)$, since each $S_\beta$ satisfies $(1_\beta)$, and $S_\alpha$ is downward directed, since each $S_\beta$ satisfies $(3_\beta)$ and is downward directed. If $\alpha$ is a successor ordinal, then by Lemma~\ref{dd-closure-lemma}, we can find a downward directed subset $S_\alpha$ of $S$ containing $S_{\alpha-1} \cup \{s_{\alpha-1}\}$ and satisfying $$|S_\alpha| \leq \mathrm{max}(|S_{\alpha-1} \cup \{s_{\alpha-1}\}|,\aleph_0) \leq \mathrm{max}(|\alpha-1|+1,\aleph_0) \leq \mathrm{max}(|\alpha|,\aleph_0).$$ It follows that in either case, $S_\alpha$ is downward directed and satisfies $(1_\alpha), (2_\alpha), (3_\alpha)$, as desired.

Finally, $S = \bigcup_{\alpha \in \kappa} S_\alpha$, since each $S_\alpha$ satisfies $(2_\alpha)$ and is a subset of $S$. Moreover, the $S_\alpha$ form a chain, by $(3_\alpha)$, and each has cardinality less than $\kappa$, by $(1_\alpha)$.
\end{proof}

\begin{notation}
We assign the indicated name in case the poset $\, (P, \leq)$ satisfies the following property.  

\medskip

(GLBC) \ \  Every chain in $P$ has a greatest lower bound in $P$.
\hfill $\Box$ 
\end{notation}

\begin{theorem}[Bergman] \label{bergman}
Let $\, (P, \leq)$ be a partially ordered set. Then $P$ satisfies \emph{GLBC} if and only if it satisfies \emph{GLB}.
\end{theorem}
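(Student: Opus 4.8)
The plan is to prove the two implications separately, with the forward direction (GLB $\Rightarrow$ GLBC) being immediate and the reverse direction (GLBC $\Rightarrow$ GLB) carrying all the weight. Since a chain is a downward directed set, GLB trivially implies GLBC, so I would dispatch that in one sentence. For the converse, I would assume $P$ satisfies GLBC and let $S \subseteq P$ be an arbitrary downward directed subset; the goal is to produce a greatest lower bound for $S$ in $P$. The natural strategy is to reduce the problem to chains by transfinite induction on $|S|$, using Lemma~\ref{dd-union-lemma} to decompose an uncountable downward directed set as an increasing union of smaller downward directed sets, and Lemma~\ref{dd-closure-lemma} to control cardinalities along the way.

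The key steps, in order, would be as follows. First, handle the base cases: if $S$ is finite, downward directedness gives a least element of $S$ (repeatedly apply the binary lower-bound property), which is its greatest lower bound; if $S$ is countable, enumerate $S = \{s_0, s_1, s_2, \dots\}$ and build a descending chain $t_0 \geq t_1 \geq t_2 \geq \cdots$ inside $S$ with $t_n \leq s_n$ for each $n$ (choosing $t_{n+1}$ below both $t_n$ and $s_{n+1}$), so that this chain is cofinal in $S$ in the $\preceq$ sense; its greatest lower bound, which exists by GLBC, is then easily checked to be a greatest lower bound for $S$. Second, for the inductive step with $|S| = \kappa$ uncountable, invoke Lemma~\ref{dd-union-lemma} to write $S = \bigcup_{\alpha \in \kappa} S_\alpha$ as a chain of downward directed subsets each of cardinality $< \kappa$. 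By the induction hypothesis each $S_\alpha$ has a greatest lower bound $p_\alpha \in P$, and since the $S_\alpha$ are increasing, the $p_\alpha$ form a descending chain (if $\alpha < \beta$ then $S_\alpha \subseteq S_\beta$ forces $p_\beta \leq p_\alpha$). Apply GLBC to the chain $\{p_\alpha \mid \alpha \in \kappa\}$ to get $p = \mathrm{glb}(\{p_\alpha\}) \in P$, and verify that $p = \mathrm{glb}(S)$: it is a lower bound for $S$ because each $s \in S$ lies in some $S_\alpha$ so $p \leq p_\alpha \leq s$; and it is the greatest such because any lower bound $y$ for $S$ is a lower bound for each $S_\alpha$, hence $y \leq p_\alpha$ for all $\alpha$, hence $y \leq p$.

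The main obstacle I anticipate is bookkeeping in the transfinite induction: one must make sure the induction hypothesis is stated for \emph{all} downward directed sets of cardinality $< \kappa$ (not just subsets of $S$), so that it genuinely applies to each $S_\alpha$, and one must confirm that Lemma~\ref{dd-union-lemma}'s hypothesis (uncountability) covers exactly the cases not already handled by the finite and countable base cases. A secondary subtlety is the countable case: GLBC is stated for chains, so one really does need to extract a cofinal descending chain from a countable downward directed set rather than applying GLBC directly to $S$ — this is where the enumeration-and-diagonalization argument is needed. None of these steps involves hard calculation; the content is entirely in setting up the induction cleanly and citing the two cardinality lemmas at the right moments.
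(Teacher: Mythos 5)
Your proposal is correct and follows essentially the same route as the paper's proof: the trivial forward direction, the enumeration-and-diagonalization argument extracting a $\preceq$-cofinal descending chain in the countable case, and transfinite induction on $|S|$ via Lemma~\ref{dd-union-lemma} in the uncountable case, with the greatest lower bounds of the $S_\alpha$ forming a chain to which GLBC is applied. The subtleties you flag (stating the inductive hypothesis for all downward directed sets of cardinality less than $\kappa$, and the necessity of passing to a chain before invoking GLBC) are exactly the points the paper's argument handles.
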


\begin{proof}
Clearly, GLB implies GLBC for any poset $P$. Thus let us assume that $P$ satisfies GLBC, and prove that it satisfies GLB.

First, let $S$ be a countable downward directed subset of $P$. We wish to show that $S$ has a greatest lower bound in $P$. This is obvious if $S$ is finite, so let us assume that $|S| = \aleph_0$ and write $S = \{s_n \mid n \in \N\}$. Define inductively for each $n \in \N$ an element $t_n \in S$ as follows. Set $t_0 = s_0$, and assuming that $t_n$ is defined for some $n \in \N$, let $t_{n+1} \in S$ be such that $t_{n+1} \leq t_n$ and $t_{n+1} \leq s_n$. Then $T=\{t_n \mid n \in \N\} \preceq S$. Since $P$ satisfies GLBC, the chain $T$ has a greatest lower bound in $P$. Moreover, $\mathrm{glb}(T)$ is a lower bound for $S$ since $T \preceq S$, and hence $\mathrm{glb}(T) = \mathrm{glb}(S)$, since $T \subseteq S$. In particular, we have established the result in the case where $P$ is countable.

Next, assume that $\aleph_0 < |P|$, and let $\kappa \leq |P|$ be an uncountable cardinal. Assume inductively that every downward directed subset of $P$ with cardinality less than $\kappa$ has a greatest lower bound in $P$. Let $S$ be a downward directed subset of $P$ of cardinality $\kappa$. Then, by Lemma~\ref{dd-union-lemma}, we can write $S = \bigcup_{\beta \in \kappa} S_\beta$, where each $S_\beta$ is downward directed and has cardinality less than $\kappa$, and where $S_{\beta'}\subseteq S_{\beta}$ for all $\beta' < \beta$. By the inductive hypothesis, for each $\beta \in \kappa$, the downward directed set $S_\beta$ has a greatest lower bound $s_\beta \in P$. Moreover, since the $S_\beta$ form a chain (under set inclusion), the elements $s_\beta$ also form a chain (under $\leq$), and $T=\{s_\beta \mid \beta \in \kappa\} \preceq S$. Since $P$ satisfies GLBC, the chain $T$ has a greatest lower bound in $P$. Moreover, $\mathrm{glb}(T)$ is a lower bound for $S$ (since $T \preceq S$), and hence $\mathrm{glb}(T) = \mathrm{glb}(S)$, since $S = \bigcup_{\beta \in \kappa} S_\beta$.

Thus, by induction, every downward directed subset of $P$ has a greatest lower bound. That is, $P$ satisfies GLB.
\end{proof}

In view of this fact, it is natural to ask whether our main results (Theorems~\ref{EPspec} and~\ref{APprop}) could be recast entirely in terms of chains, without any reference to arbitrary downward directed sets. To explore this question briefly, we define a ``chain analogue" of $\AT$.

\begin{definition}
Given a partially ordered set $\, (P, \leq)$, let 
$$\ATC(P) = P \cup \{x_{[S]} \mid S \subseteq P \text{ is a chain without least element}\}.$$ Further, extend $\, \leq$ to a binary relation $\, \leq_{\ATC}$ on $\ATC(P)$ by letting $p \leq_{\ATC} q$ $(p,q \in \ATC(P))$ if any of the conditions $\, (1)$--$(4)$ in Definition~\ref{leqAdef} holds $($with $\ATC(P)$ in place of $\AT(P)$$)$. \hfill $\Box$
\end{definition}

\begin{lemma}
$(\ATC(P), \leq_{\ATC})$ is a poset for any poset $\, (P, \leq)$.
\end{lemma}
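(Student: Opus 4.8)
The plan is to establish reflexivity, antisymmetry, and transitivity of $\leq_{\ATC}$ directly, mimicking the structure of the four-case analysis in Definition~\ref{leqAdef}. Reflexivity is immediate: for $p \in P$ we invoke reflexivity of $\leq$ via case (1), and for $p = x_{[S]} \in \ATC(P) \setminus P$ we note $S \preceq S$ trivially, so case (4) applies. The real work is in antisymmetry and transitivity, where one must chase the definition of $\preceq$ through combinations of the four cases.

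For antisymmetry, suppose $p \leq_{\ATC} q$ and $q \leq_{\ATC} p$. If both lie in $P$, antisymmetry of $\leq$ finishes it. The mixed case $p \in P$, $q = x_{[S]} \notin P$ is impossible: case (2) would give $p \leq s$ for all $s \in S$, while case (3) would give $s \leq p$ for some $s \in S$, forcing $p = s \in S$ to be a least element of $S$, contradicting the defining condition on $x_{[S]}$. If $p = x_{[S]}$ and $q = x_{[T]}$ are both outside $P$, then case (4) gives $S \preceq T$ and $T \preceq S$, i.e.\ $S \approx T$, so $[S] = [T]$ and hence $p = q$. For transitivity, suppose $p \leq_{\ATC} q \leq_{\ATC} r$; I would organize this by the membership pattern of $(p,q,r)$ in $P$ versus $\ATC(P)\setminus P$. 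The all-in-$P$ case uses transitivity of $\leq$. When $q = x_{[S]} \notin P$ sits in the middle, I would use the key observation that case (2) applied to $p \leq_{\ATC} x_{[S]}$ means $p$ is a lower bound for $S$, and case (3) applied to $x_{[S]} \leq_{\ATC} r$ means some $s \in S$ has $s \leq r$; combining, $p \leq s \leq r$. The cases where $q \in P$ but $p$ or $r$ is adjoined are handled by noting that an element $x_{[S]} \leq_{\ATC} q$ with $q \in P$ means $s \leq q$ for some $s \in S$, and then transitivity of $\leq$ or of $\preceq$ propagates the inequality. The purely-adjoined case at two or three slots reduces to transitivity of $\preceq$ on $\mathcal{P}(P)$, together with the elementary fact that $p \leq q$ in $P$ with $q \leq_{\ATC} x_{[T]}$ forces $p \leq_{\ATC} x_{[T]}$ (lower bounds are downward closed).

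The main obstacle is purely bookkeeping: there are on the order of a dozen sub-cases once one fixes which of $p,q,r$ lie in $P$, and each requires translating $\leq_{\ATC}$ into a statement about lower bounds or about $\preceq$, then recombining. Nothing is deep, but one must be careful that the condition ``$S$ has no least element'' is genuinely used only where needed (namely in antisymmetry, to rule out the mixed case), and that it is \emph{not} needed for transitivity, where only the definitions of $\preceq$ and $\leq$ are in play. A convenient shortcut: since chains are in particular downward directed subsets, the relation $\leq_{\ATC}$ is literally the restriction of $\leq_{\AT}$ to the subset $\ATC(P) \subseteq \AT(P)$, because the membership tests and the defining conditions (1)--(4) are identical. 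Since $(\AT(P), \leq_\AT)$ is a poset by Theorem~\ref{EPspec}, and a restriction of a partial order to a subset is again a partial order, the lemma follows immediately. I would present this observation as the proof, relegating the case-by-case verification to a remark or omitting it entirely.

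\begin{proof}
Every chain in $P$ without least element is in particular a downward directed subset of $P$ without least element, so $\ATC(P) \subseteq \AT(P)$. Moreover, for $p, q \in \ATC(P)$ the conditions (1)--(4) defining $p \leq_{\ATC} q$ are, word for word, the conditions (1)--(4) of Definition~\ref{leqAdef} defining $p \leq_\AT q$ (with the sets $S, T$ now ranging over chains rather than arbitrary downward directed sets, but with no change in the inequalities asserted). Hence $\leq_{\ATC}$ is precisely the restriction of $\leq_\AT$ to the subset $\ATC(P)$. By Theorem~\ref{EPspec}, $(\AT(P), \leq_\AT)$ is a partially ordered set, and the restriction of a partial order to any subset is again a partial order. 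Therefore $(\ATC(P), \leq_{\ATC})$ is a poset.
\end{proof}
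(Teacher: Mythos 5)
Your final proof is correct and coincides with the paper's own argument: the paper likewise observes that the natural embedding of $\ATC(P)$ into $\AT(P)$ identifies $\leq_{\ATC}$ with the restriction of $\leq_{\AT}$, and then invokes Theorem~\ref{EPspec}. The preliminary case-by-case discussion is unnecessary given that you (rightly) discard it in favor of this shortcut.
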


\begin{proof}
Let $\phi : \ATC(P) \to \AT(P)$ be the natural embedding; that is, the map that sends the copy of $P$ in $\ATC(P)$ to that in $\AT(P)$, and sends each $x_{[S]}$ to the corresponding element in $\AT(P)$. Then clearly, $p \leq_{\ATC} q$ if and only if $\phi(p) \leq_{\AT} \phi(q)$ for all $p, q \in \ATC(P)$. Since, by Theorem~\ref{EPspec}, $(\AT(P), \leq_{\AT})$ is a partially ordered set, $(\phi(\ATC(P)), \leq_{\AT})$ is one as well. Thus, the same holds for $(\ATC(P), \leq_{\ATC})$.
\end{proof}

On the basis of Theorem~\ref{bergman} and the previous lemma, one might guess that $\AT(P)$ is order-isomorphic to $\ATC(P)$. However, that is not true in general, as shown in the next example, which is based on one from~\cite{Diestel}. 

\begin{example}
Let $P_0$ be an uncountable antichain (i.e., a poset where no pair of elements is comparable). Assuming that $P_n$ is defined for $n \geq 0$, let $P_{n+1}$ be obtained by adding for every pair of points $p,q \in P_n$ a new point less than both (and hence also less than every point greater than either $p$ or $q$). Then $P = \bigcup_{n=1}^\infty P_n$ is partially ordered, by the ordering $\leq$ inherited from the $P_n$. It is also immediate that $P$ is downward directed. 

Note that each element of $P$ is less than only finitely many others, and that every totally ordered subset of $P$ is countable. From this it follows that $P$ does not have a greatest lower bound in $\ATC(P)$. For, any such greatest lower bound would be of the form $x_{[S]}$ for some chain $S \subseteq P$ without least element. But, since $S$ is countable, as noted above, there are only countably many elements $p \in P$ such that $x_{[S]} \leq_{\ATC} p$, by the definition of $\leq_{\ATC}$. Hence $x_{[S]}$ cannot be the greatest lower bound for $P$, since $P$ is uncountable.

On the other hand, by Theorem~\ref{EPspec}, $(\AT(P), \leq_\AT)$ satisfies GLB, and therefore cannot be order-isomorphic to $(\ATC(P), \leq_\ATC)$.  \hfill $\Box$
\end{example}

This example shows that it is not possible to replace $\AT$ with $\ATC$ in Theorems~\ref{EPspec} and~\ref{APprop}, and therefore one cannot remove all references to downward directed sets (replacing them with chains) in our main results. Still, similarly to how we modified GLB to define GLBC, one could define ``chain analogues" of DC and DD, and then ask whether (or under what additional hypotheses) those are equivalent to DC and DD. Considering such questions at length, however, is beyond the scope of this paper.

\vspace{.1in}

\noindent
Department of Mathematics, University of Colorado, Colorado Springs, CO, 80918, USA \newline
\noindent {\href{mailto:abrams@math.uccs.edu}{abrams@math.uccs.edu}}

\vspace{.1in}

\noindent
Department of Algebra, Geometry and Topology, University of M\'{a}laga, M\'{a}laga, 29071, Spain \newline
\noindent {\href{mailto:g.aranda@uma.es}{g.aranda@uma.es}}

\vspace{.1in}

\noindent
Department of Mathematics, University of Colorado, Colorado Springs, CO, 80918, USA \newline
\noindent {\href{mailto:zmesyan@uccs.edu}{zmesyan@uccs.edu}}

\vspace{.1in}

\noindent
Department of Mathematics, University of Colorado, Colorado Springs, CO, 80918, USA \newline
\noindent {\href{mailto:cdsmith@gmail.com}{cdsmith@gmail.com}}

\end{document}